 \renewcommand{\epsilon}{\varepsilon}
\newtheorem{theorem}{Theorem}[section]
 \newtheorem{lemma}[theorem]{Lemma}
 \newtheorem{proposition}[theorem]{proposition}
 \newtheorem{Proposition}[theorem]{Proposition}
\newtheorem{deff}[theorem]{Definition}
 \newtheorem{rem}[theorem]{Remark}
 \newcommand{\bth}{\begin{theorem}}
 \newcommand{\ble}{\begin{lemma}}
 \newcommand{\bcor}{\begin{corr}}
 \newcommand{\bdeff}{\begin{deff}}
 \newcommand{\bprop}{\begin{proposition}}
 \newcommand{\ele}{\end{lemma}}
 \newcommand{\ecor}{\end{corr}}
 \newcommand{\edeff}{\end{deff}}
 \newcommand{\eprop}{\end{proposition}}
 \renewcommand{\Pi}{\varPi}
 \renewcommand{\epsilon}{\varepsilon}
\numberwithin{equation}{section}
\title
[Nonlinear second boundary problem]{On the Second Boundary Value Problem for a Class of Fully Nonlinear Flow III}
\thanks{The second author is supported by National Natural Science Foundation of China (No. 11771103 and 11871102) and Guangxi Natural Science Foundation (2017GXNSFFA198017). The third author is supported in part by the National Natural Science Foundation of China (11631002 and 11871102).}
\date{}
\begin{document}
\maketitle

\begin{center}Chong Wang \footnote{Fengtai School of the High School Affiliated to Renmin University of China, Beijing 100074, China. wch0229@mail.bnu.edu.cn}
$\cdot$ Rongli Huang \footnote{The corresponding author. School of Mathematics and Statistics, Guangxi Normal University, Guangxi 541004, China. ronglihuangmath@gxnu.edu.cn}
$\cdot$ Jiguang Bao \footnote{School of Mathematical Sciences, Beijing Normal University, Laboratory of Mathematics and Complex Systems, Ministry of Education, Beijing 100875, China. jgbao@bnu.edu.cn}
\end{center}


\begin{abstract}
 We study the solvability of the second boundary value problem of the Lagrangian mean curvature equation arising from  special Lagrangian geometry.  By the parabolic method we obtain the existence and uniqueness of the smooth uniformly convex solution, which generalizes the Brendle-Warren's theorem about minimal Lagrangian diffeomorphism in Euclidean metric space.
\end{abstract}

{\bfseries Mathematics Subject Classification 2000:}\quad 35J25 $\cdot$ 35J60 $\cdot$ 53A10




\section{Introduction}
In this work, we are interested in  the long time existence and convergence of convex solutions for special variables, which solves the fully nonlinear equation
\begin{equation}\label{eeee1.1}
\frac{\partial u}{\partial t}=F\left(\lambda (D^2u)\right)-f(x),\ \ t>0,\ x\in\Omega,
\end{equation}
associated with the second boundary value condition
\begin{equation}\label{eeee1.2}
Du(\Omega)=\tilde{\Omega},  \ \ t>0,
\end{equation}
and the initial condition
\begin{equation}\label{eeee1.3}
u=u_{0}, \ \ t=0,\ x\in\Omega
\end{equation}
for given $F$, $f$ and $u_{0}$, where $\Omega$ and $\tilde{\Omega}$ are two uniformly convex bounded domains with smooth boundary in $\mathbb{R}^{n}$ and $\lambda(D^2 u)=(\lambda_1,\cdots, \lambda_n)$ are the eigenvalues of Hessian matrix $D^2 u$. One of our main goal to study the flow is to obtain the existence and uniqueness of the smooth uniformly convex solution for the second boundary value problem of the Lagrangian mean curvature equation
\begin{equation}\label{e1.1}
\left\{ \begin{aligned}F_\tau(\lambda(D^2 u))&=\kappa\cdot x+c,\ \ x\in \Omega, \\
Du(\Omega)&=\tilde{\Omega},
\end{aligned} \right.
\end{equation}
where $\kappa\in \mathbb{R}^{n}$ is a constant vector, $c$ is a constant to be determined and
\begin{equation}\label{e1.101}
F_{\tau}(\lambda):=\left\{ \begin{aligned}
&\frac{1}{n}\sum_{i=1}^n\ln\lambda_{i}, &&\tau=0, \\
& \frac{\sqrt{a^2+1}}{2b}\sum_{i=1}^n\ln\frac{\lambda_{i}+a-b}{\lambda_{i}+a+b},  &&0<\tau<\frac{\pi}{4},\\
& -\sqrt{2}\sum_{i=1}^n\frac{1}{1+\lambda_{i}}, &&\tau=\frac{\pi}{4},\\
& \frac{\sqrt{a^2+1}}{b}\sum_{i=1}^n\arctan\frac{\lambda_{i}+a-b}{\lambda_{i}+a+b},  \ \ &&\frac{\pi}{4}<\tau<\frac{\pi}{2},\\
& \sum_{i=1}^n\arctan\lambda_{i}, &&\tau=\frac{\pi}{2},
\end{aligned} \right.
\end{equation}
where $a=\cot \tau$, $b=\sqrt{|\cot^2\tau-1|}$. Regarding the equation, the details can be seen in \cite{WHB}.

Let
$$g_\tau=\sin \tau \delta_0+\cos \tau g_0,\ \ \tau\in\left[0,\frac{\pi}{2}\right]$$
be the linear combined metric of the standard Euclidean metric
$$\delta_0=\sum_{i=1}^n dx_i\otimes dx_i+\sum_{j=1}^ndy_j\otimes dy_j$$
and the pseudo-Euclidean metric
$$g_{0}=\frac{1}{2}\sum_{i=1}^n dx_i\otimes dy_i+\frac{1}{2}\sum_{j=1}^n dy_j\otimes dx_j$$
in $\mathbb{R}^{n}\times \mathbb{R}^{n}$.

Under  the framework of calibrated geometry in $(\mathbb{R}^n\times\mathbb{R}^n, g_\tau)$, Warren \cite{MW} firstly obtained the special Lagrangian equation
as the form
\begin{equation}\label{equ0.1.5}
F_\tau(\lambda(D^2 u))=c,
\end{equation}
which is a special case of (\ref{e1.1}) when $\kappa\equiv 0$. Then $(x, Du(x))$ is a minimal Lagrangian graph in $(\mathbb{R}^n\times\mathbb{R}^n, g_\tau)$.

If $\tau=0$, (\ref{equ0.1.5}) becomes the famous Monge-Amp\`{e}re equation
$$\det D^2u=e^{2c},$$
which the general form is
\begin{equation}\label{equ0.1.3}
\det D^2 u=f(x, u, Du).
\end{equation}

As for $\tau=\frac{\pi}{2}$, one can show  that (\ref{equ0.1.5}) is  the classical special Lagrangian equation
\begin{equation}\label{equ0.1.1}
\sum_{i=1}^{n}\arctan\lambda_{i}(D^2 u)=c.
\end{equation}
The special Lagrangian equation (\ref{equ0.1.1}) was first introduced by Harvey and Lawson in \cite{HL} back in 1982. Its solutions $u$ were shown to have the property that the graph $(x, Du(x))$ in $(\mathbb{R}^n\times\mathbb{R}^n, \delta_0)$ is a Lagrangian submanifold which is absolutely
volume-minimizing, and the linearization at any solution is elliptic. They proved that a Lagrangian graph $(x, Du(x))$ in $(\mathbb{R}^n\times\mathbb{R}^n, \delta_0)$ is minimal if and only if the Lagrangian angle is a constant, that is, (\ref{equ0.1.1}) holds.
Interestingly, several methods for studying the  Bernstein type theorems occured  in the literature \cite{JX} and  \cite{Y}. Jost and Xin \cite{JX} used the properties of harmonic maps into convex subsets of Grassmannians. Yuan \cite{Y} showed that entire convex solutions of (\ref{equ0.1.1})
must be  a quadratic polynomial based on the geometric measure theory.

People have worked on showing the existence of the minimal  Lagrangian graphs ($\kappa\equiv 0$) and
$Du$ is a diffeomorphism from $\Omega$ to $\tilde{\Omega}$. That is,
\begin{equation}\label{e1.1.1}
\left\{ \begin{aligned}F_\tau(\lambda(D^2 u))&=c,\ \ x\in \Omega, \\
Du(\Omega)&=\tilde{\Omega}.
\end{aligned} \right.
\end{equation}
Here $Du$ is a minimal Lagrangian diffeomorphism from $\Omega$ to $\tilde \Omega$. In the case of $\tau=0$, in dimension 2, Delano\"{e} \cite{P} obtained a unique smooth solution for the second boundary value problem of the Monge-Amp\`{e}re equation if both domains are uniformly convex. Later the generalization of Delano\"{e}'s theorem to higher dimensions was given by Caffarelli \cite{L} and Urbas \cite{JU}. Using the parabolic method, Schn\"{u}rer and Smoczyk \cite{OK} also obtained the existence of solutions to (\ref{e1.1.1}). As far as $\tau=\frac{\pi}{2}$ is concerned, Brendle and Warren \cite{SM} proved the existence and uniqueness of the solution by the elliptic method, and the second author \cite{HR} obtained the existence of solution by considering the second boundary value problem for Lagrangian mean curvature flow. Then by the elliptic and parabolic method, the second author with Ou \cite{HO}, Ye \cite{HRY}  and Chen \cite{CHY} proved the existence and uniqueness of the solution for $0<\tau<\frac{\pi}{2}$.

We are now in a position to find out the Lagrangian graph $(x, Du(x))$ prescribed constant mean curvature vector $\kappa$ in $(\mathbb{R}^n\times\mathbb{R}^n, g_\tau)$ such that $Du$ is the diffeomorphism between two uniformly convex bounded domains. Thus it can be described by the equation (\ref{e1.1}), see  \cite{WHB}.

 By the continuity method, it follows from our early work  \cite{WHB}  we obtain the existence and uniqueness of the smooth uniformly convex solution to (\ref{e1.1}). That is,

\begin{theorem}\label{t1.1}
For $\tau\in\left(0,\frac{\pi}{2}\right]$, if $|\kappa|$ is sufficiently small, then there exist a uniformly convex solution $u\in C^{\infty}(\bar{\Omega})$ and a unique constant $c$ solving (\ref{e1.1}), and $u$ is unique up to a constant.
\end{theorem}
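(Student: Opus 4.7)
The plan is to apply the method of continuity in the mean-curvature parameter. For $s\in[0,1]$ I would consider the family
$$
\left\{\begin{aligned}
F_\tau(\lambda(D^2 u)) &= s\,\kappa\cdot x + c_s,\quad x\in\Omega,\\
Du(\Omega) &= \tilde{\Omega},
\end{aligned}\right.
$$
and let $I\subset[0,1]$ denote the set of $s$ for which this problem admits a uniformly convex solution $(u_s,c_s)\in C^{\infty}(\bar{\Omega})\times\mathbb{R}$, unique up to additive constants in $u_s$. At $s=0$ the system is exactly (\ref{e1.1.1}), whose solvability is Brendle--Warren \cite{SM} when $\tau=\frac{\pi}{2}$ and is due to \cite{HO,HRY,CHY} when $0<\tau<\frac{\pi}{2}$; thus $0\in I$. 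I would then show $I$ is both open and closed, whence $1\in I$ and the existence assertion follows.

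Openness is an implicit function theorem argument. At a uniformly convex $u_s$ the linearisation of $F_\tau$ is uniformly elliptic with coefficients $F_\tau^{ij}(D^2u_s)$; the pull-back of a defining function of $\partial\tilde{\Omega}$ under $Du$ linearises the second boundary condition to a \emph{strictly} oblique condition, whose obliqueness constant is controlled below by the uniform convexity of $\tilde{\Omega}$. After quotienting by the one-dimensional kernel of additive constants in $u$ (balanced by the free parameter $c_s$), the resulting map between the appropriate H\"older spaces is an isomorphism, so nearby values of $s$ lie in $I$.

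Closedness is the substantive step and rests on a priori $C^{2,\alpha}(\bar{\Omega})$ estimates uniform in $s\in[0,1]$. When $|\kappa|$ is small the inhomogeneity $s\kappa\cdot x+c_s$ stays in a controlled range, which keeps the Hessian $D^2u_s$ inside the admissible cone on which $F_\tau$ is smooth, concave and elliptic. The $C^0$ and $C^1$ bounds follow from $Du(\Omega)=\tilde{\Omega}$ together with uniform obliqueness; the constant $c_s$ is pinned down by integrating the equation and using the change of variables $y=Du(x)$, which forces $c_s$ to equal a prescribed mean of $F_\tau(\lambda(D^2u_s))-s\kappa\cdot x$; and the decisive ingredient is the boundary $C^2$ estimate, obtained in our earlier work \cite{WHB} by a barrier construction tailored to $F_\tau$ and to the defining functions of the two uniformly convex domains. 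Evans--Krylov, Schauder theory and bootstrapping then deliver uniform $C^\infty$ bounds, so $I$ is closed.

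Uniqueness is handled by linearisation. If $(u_1,c_1),(u_2,c_2)$ are two solutions, then $w=u_1-u_2$ satisfies $a^{ij}\partial_{ij}w=c_1-c_2$ in $\Omega$ with $a^{ij}=\int_0^1 F_\tau^{ij}(sD^2u_1+(1-s)D^2u_2)\,ds$ uniformly positive definite, together with the oblique condition inherited from $Du_k(\Omega)=\tilde{\Omega}$. Integrating against the associated invariant density (cf.\ the arguments in \cite{HR,HO}) gives $c_1=c_2$, after which the strong maximum principle and the oblique Hopf lemma force $w$ to be constant. The sole genuine obstacle throughout is the uniform boundary $C^2$ estimate under the second boundary condition, which is precisely the technical heart of \cite{WHB}; modulo that ingredient, the continuity scheme above produces both existence and uniqueness.
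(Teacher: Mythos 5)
Your outline is essentially correct, but it is not the proof this paper gives: here Theorem \ref{t1.1} is deduced as a corollary of the parabolic Theorem \ref{tttt1.1}. The paper checks that $F_\tau$, $\tau\in(0,\frac{\pi}{2}]$, satisfies the structure conditions (\ref{e1.2.0})--(\ref{e1.4}) (Proposition \ref{prop2.10.2}), observes that $f(x)=\kappa\cdot x$ with $|\kappa|$ small lies in ${\mathscr{A}}_\delta$ with $|Df|$ small (Remark \ref{rem2020}), and then runs the flow $\partial_t u=F_\tau(\lambda(D^2u))-\kappa\cdot x$ under $h(Du)=0$: short-time existence via an inverse function theorem, uniform strict obliqueness and two-sided Hessian bounds by barrier arguments on both the primal and the Legendre-dual problems, and long-time existence plus convergence to a translating solution $u_\infty(x)+c_\infty t$, with the constant $c$ arising as the asymptotic speed $c_\infty$. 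Your elliptic continuity scheme in the parameter $s$ is instead the route of the companion paper \cite{WHB}, which the present paper explicitly cites as having already established Theorem \ref{t1.1} that way; so your approach is legitimate, and the genuinely hard content --- strict obliqueness and the boundary $C^2$ estimate via barriers for $u$ and its Legendre transform, with the smallness of $|\kappa|$ used both to control $\operatorname{osc}(f)$ (so the eigenvalues stay in $\Gamma^{+}_{]\mu,\omega[}$) and to control $|Df|$ in the dual barrier --- is identical in substance, merely cast elliptically rather than parabolically. The flow dispenses with your openness/Fredholm step and produces $c$ canonically; your route reaches uniqueness and full boundary regularity more directly. One local slip: determining $c_s$ ``by integrating the equation and using the change of variables $y=Du(x)$'' is a Monge--Amp\`ere device ($\tau=0$, which is excluded here) and does not apply to general $F_\tau$; the bound you actually need is the elementary two-sided estimate $F_\tau(0,\cdots,0)-\sup_\Omega|s\kappa\cdot x|\leq c_s\leq F_\tau(+\infty,\cdots,+\infty)+\sup_\Omega|s\kappa\cdot x|$, which follows from the convexity of $u_s$, the monotonicity of $F_\tau$ and (\ref{e1.2.0}).
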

Theorem \ref{t1.1} exhibits  an extension of the previous work on $\kappa=0$ done by Brendle-Warren \cite{SM}, Huang \cite{HR}, Huang-Ou \cite{HO}, Huang-Ye \cite{HRY} and Chen-Huang-Ye \cite{CHY}.

In the present paper, we pursue a strategy of deriving  asymptotic convergence theorem to the solutions of (\ref{eeee1.1})-(\ref{eeee1.3})
for proving Theorem \ref{t1.1} based purely on the previous results of S.J. Altschuler and  L.F. Wu \cite{SL}, O.C. Schn\"{u}rer \cite{OC}, J. Kitagawa \cite{JK}.

Motivated by the work of Huang-Ou \cite{HO} and Huang-Ye \cite{HRY}, we introduce a class of nonlinear functions containing $F_\tau(\lambda)$, $\tau\in (0,\frac{\pi}{2}]$.

For $0<\alpha_{0}<1$, let $F(\lambda_{1},\cdots, \lambda_{n})$ be a $C^{2+\alpha_{0}}$ symmetric function defined on
$$\Gamma^+_n:=\left\{(\lambda_1,\cdots,\lambda_n)\in \mathbb{R}^n:\lambda_i>0,\ i=1,\cdots,n\right\},$$
and satisfy
\begin{equation}\label{e1.2.0}
-\infty<F(0,\cdots,0)<F(+\infty,\cdots,+\infty)<+\infty,
\end{equation}
\begin{equation}\label{e1.2}
\frac{\partial F}{\partial \lambda_i}>0,\ \ 1\leq i\leq n\ \  \text{on}\ \  \Gamma^+_n,
\end{equation}
and
\begin{equation}\label{e1.2.1}
\left(\frac{\partial^2 F}{\partial \lambda_i\partial \lambda_j}\right)\leq 0\ \  \text{on}\ \  \Gamma^+_n.
\end{equation}
For any $(\mu_1,\cdots,\mu_n)\in {\Gamma}^+_n$, denote
$$\lambda_i=\frac{1}{\mu_i},\ \ 1\leq i\leq n,$$
and
$$\tilde F (\mu_1,\cdots,\mu_n):=-F(\lambda_1,\cdots,\lambda_n).$$
Assume that
\begin{equation}\label{e1.2.2}
\left(\frac{\partial^2 \tilde F}{\partial \mu_i\partial \mu_j}\right)\leq 0\ \  \text{on}\ \  \Gamma^+_n.
\end{equation}
For any $s_1>0$, $s_2>0$, define
$$\Gamma^{+}_{]s_1,s_2[}=\{(\lambda_{1},\cdots, \lambda_{n})\in {\Gamma}^+_n:0\leq\min_{1\leq i\leq n}\lambda_i\leq s_1,\ \max_{1\leq i\leq n}\lambda_i\geq s_2\}.$$
We assume that there exist positive constants $\Lambda_1$ and $\Lambda_2$, depending  on $s_1$ and $s_2$, such that for any $(\lambda_{1},\cdots, \lambda_{n})\in \Gamma^{+}_{]s_1,s_2[}$,
\begin{equation}\label{e1.3}
 \Lambda_1\leq\sum^{n}_{i=1}\frac{\partial F}{\partial \lambda_{i}}\leq \Lambda_2,
\end{equation}
and
\begin{equation}\label{e1.4}
  \Lambda_1\leq\sum^{n}_{i=1}\frac{\partial F}{\partial \lambda_{i}}\lambda^{2}_{i}\leq \Lambda_2.
\end{equation}
\begin{rem}
Since
$$\frac{\partial^2 \tilde F}{\partial \mu_i\partial \mu_j}=-\frac{\partial^2 F}{\partial \lambda_i\partial \lambda_j}\lambda_i^2\lambda_j^2-2\lambda_i^3\delta_{ij}\frac{\partial F}{\partial \lambda_i},$$
we cannot deduce (\ref{e1.2.2}) from (\ref{e1.2}) and (\ref{e1.2.1}).
\end{rem}
For $f(x)\in C^{2+\alpha_{0}}(\bar{\Omega})$, we define
$$\mathop{\operatorname{osc}}_{\bar{\Omega}}(f):=\max_{x,y\in\bar{\Omega}}|f(x)-f(y)|,$$
and
$${\mathscr{A}}_\delta:=\left\{f(x)\in C^{2+\alpha_{0}}(\bar{\Omega}): f\ \text{is concave},\ \mathop{\operatorname{osc}}_{\bar{\Omega}}(f) \leq\delta \right\}.$$
 The constant
 $\delta$ is any positive constant satisfying
 $$\delta<\min\left\{F(+\infty,\cdots,+\infty)-\max_{\bar \Omega}F\left(\lambda (D^2 u_0)\right),\min_{\bar \Omega}F\left(\lambda(D^2 u_0)\right)-F(0,\cdots,0)\right\}.$$

\begin{rem}\label{rem2020}
Let $f(x)=\kappa\cdot x$ and if $|\kappa|$ is sufficiently small, then  $f(x)\in {\mathscr{A}}_\delta$.
\end{rem}

Our main results are the following:
\begin{theorem}\label{tttt1.1}
Let $F$ satisfy the structure conditions (\ref{e1.2.0})-(\ref{e1.4}) and $f\in {\mathscr{A}}_\delta$. If $|Df|$ is sufficiently small, then for any given initial function $u_{0}$ which is uniformly convex and satisfies $Du_{0}(\Omega)=\tilde{\Omega}$, the uniformly convex solution of (\ref{eeee1.1})-(\ref{eeee1.3}) exists for all $t\geq 0$ and $u(\cdot,t)$ converges to a function $u^{\infty}(x,t)=u_\infty(x)+c_{\infty}\cdot t$ in $C^{1+\zeta}(\bar{\Omega})\cap C^{4+\alpha}(\bar{D})$ as $t\rightarrow\infty$ for any $D\subset\subset\Omega$, $0<\zeta<1$ and $0<\alpha<\alpha_{0}$. That is,
$$\lim_{t\rightarrow+\infty}\|u(\cdot,t)-u^{\infty}(\cdot,t)\|_{C^{1+\zeta}(\bar{\Omega})}=0,\quad
  \lim_{t\rightarrow+\infty}\|u(\cdot,t)-u^{\infty}(\cdot,t)\|_{C^{4+\alpha}(\bar{D})}=0.$$
And $u_{\infty}(x)\in C^{1+1}(\bar{\Omega})\cap C^{4+\alpha_0}(\Omega)$ is a solution of
\begin{equation}\label{eeee1.18}
\left\{ \begin{aligned}
F\left(\lambda (D^2u)\right)&=f(x)+c_{\infty},\ \ x\in \Omega, \\
Du(\Omega)&=\tilde{\Omega}.
\end{aligned} \right.
\end{equation}
The constant $c_{\infty}$ depends only on $\Omega$, $\tilde{\Omega}$, $u_0$, $f$, $\delta$ and $F$. The solution to (\ref{eeee1.18}) is unique up to additions of constants.

Especially, if $F$ and $f$ are smooth, then there exist a uniformly convex solution $u_{\infty}(x)\in C^{\infty}(\bar{\Omega})$ and a constant $c_{\infty}$ solving (\ref{eeee1.18}).
\end{theorem}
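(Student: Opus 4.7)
The plan is to follow the parabolic strategy of Altschuler--Wu, Schn\"urer and Kitagawa invoked in the introduction, extending it with the boundary $C^{2}$ technique of Huang--Ou and Huang--Ye. The argument breaks into four stages: short-time existence, uniform-in-$t$ a priori estimates, oscillation decay of $u_t$, and identification of the limit with \eqref{eeee1.18}. Short-time existence of a uniformly convex solution follows from standard fully nonlinear parabolic theory after rewriting $Du(\Omega)=\tilde{\Omega}$ as the strictly oblique Neumann-type condition $h(Du)=0$, where $h$ is a smooth uniformly concave defining function of $\tilde{\Omega}$; strict obliqueness holds along any uniformly convex solution, and \eqref{e1.2} supplies uniform parabolicity.

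The core of the proof is the derivation of a priori bounds that do not deteriorate as $t\to\infty$. The $C^{0}$ bound comes from the range condition \eqref{e1.2.0}, which caps $u_t=F(\lambda(D^2u))-f$, modulo a time-dependent additive normalisation; the $C^{1}$ bound is automatic from $Du(\bar{\Omega})=\overline{\tilde{\Omega}}$. Upper bounds on the eigenvalues $\lambda_i(D^2u)$ follow from the maximum principle applied to an auxiliary quantity such as $\log\lambda_{\max}+\mu|Du|^{2}$, using \eqref{e1.2.1} to absorb the bad quadratic term; lower bounds on $\lambda_i$ (preservation of uniform convexity) require the dual concavity \eqref{e1.2.2} applied to the inverse Hessian, as the remark following \eqref{e1.2.2} makes explicit. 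The uniform ellipticity bounds \eqref{e1.3}--\eqref{e1.4} then act on the compact spectral range so obtained. The technically hardest step is the boundary $C^{2}$ estimate: one constructs barriers exploiting obliqueness and the uniform convexity of $\Omega$ and $\tilde{\Omega}$ to control tangential--tangential and mixed tangential--normal second derivatives at $\partial\Omega$, and then reads the double normal derivative off the equation itself. Smallness of $|Df|$ enters here, as in Huang--Ou and Huang--Ye, to close the barrier argument. Evans--Krylov together with the Krylov boundary estimate then yield $C^{2+\alpha}$, and bootstrap delivers the $C^{4+\alpha}_{\mathrm{loc}}$ interior and $C^{1+\zeta}$ global regularity claimed.

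Given these estimates, long-time existence is automatic. For convergence, differentiating \eqref{eeee1.1} in $t$ shows that $w=u_t$ satisfies a linear uniformly parabolic equation with the homogeneous oblique boundary condition $Dh(Du)\cdot Dw=0$, so the maximum principle gives that $M(t):=\max_{\bar{\Omega}}u_t(\cdot,t)$ is non-increasing and $m(t):=\min_{\bar{\Omega}}u_t(\cdot,t)$ is non-decreasing. The compactness afforded by the uniform $C^{4+\alpha}_{\mathrm{loc}}$ bound then forces $M(t)-m(t)\to 0$, so that $u_t(\cdot,t)\to c_{\infty}$ uniformly. Arzel\`a--Ascoli in the stated norms produces a subsequential limit $u_{\infty}$ of $u(\cdot,t)-c_{\infty}t$ which, by passing to the limit, solves \eqref{eeee1.18}. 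Uniqueness up to additive constants comes from a standard maximum-principle comparison of two solutions: the gradient of their difference must vanish somewhere on $\partial\Omega$ because both gradient maps cover $\tilde{\Omega}$. This promotes subsequential convergence to full convergence, and the smooth case is a final bootstrap.

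The main obstacle I foresee is the boundary $C^{2}$ estimate: the interior analysis is routine given \eqref{e1.2.1}--\eqref{e1.2.2}, but controlling the second normal derivative at $\partial\Omega$ uniformly in time demands a carefully tuned barrier whose construction is precisely where the smallness of $|Df|$ (equivalently the smallness of $|\kappa|$ in Remark \ref{rem2020}) is needed. A secondary subtlety is upgrading uniform convergence of $u_t$ into convergence of $u-c_{\infty}t$ in $C^{4+\alpha}_{\mathrm{loc}}\cap C^{1+\zeta}$, which I expect to obtain by interpolating the uniform higher-order estimates against the $C^{0}$ convergence of $u_t$.
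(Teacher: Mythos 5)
Your overall architecture---short-time existence for the oblique reformulation $h(Du)=0$, a uniform strict obliqueness estimate, boundary $C^{2}$ bounds by barriers, Legendre duality for the lower bound on $D^2u$, and convergence via oscillation decay of $u_t$---is the same as the paper's. But one step of your boundary $C^{2}$ plan is wrong as stated: you propose to ``read the double normal derivative off the equation itself.'' That device works for the Dirichlet problem for Monge--Amp\`ere, where an upper bound on $\det D^2u$ together with a lower bound on the tangential block caps $u_{\nu\nu}$; here it fails, because by \eqref{e1.2.0} the operator $F$ is bounded on all of $\Gamma^+_n$, so the bound on $F(\lambda(D^2u))=u_t+f$ only yields $\min_i\lambda_i\le\mu$ (Lemma \ref{llll5.1b}) and gives no upper bound whatsoever on the largest eigenvalue. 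The paper's route is Urbas's: differentiate $h(Du)=0$ tangentially to get $u_{\beta\varsigma}=0$, bound the double \emph{oblique} derivative $u_{\beta\beta}$ by a barrier via $u_{\beta\beta}=\partial_\beta h(Du)$ at the extremal point (Lemma \ref{lem3.2}), bound $u_{\varsigma\varsigma}$ by a second barrier exploiting the uniform concavity of $h$ (Lemma \ref{lem3.3}), and then decompose an arbitrary unit direction $\xi$ into a tangential part plus a multiple of $\beta$; the strict obliqueness estimate \eqref{eeee3.4} keeps this decomposition uniformly bounded and yields $u_{\xi\xi}\le C(u_{\varsigma\varsigma}+u_{\beta\beta})$ as in \eqref{eq3.4}, with no separate double-normal step. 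You need this decomposition (and its dual analogue for $\tilde u$) to close the argument.

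Two smaller points. The Pogorelov-type quantity $\log\lambda_{\max}+\mu|Du|^{2}$ is unnecessary: by concavity \eqref{e1.2.1}, $u_{\xi\xi}$ is a subsolution of the linearized operator up to $f_{\xi\xi}$, so the interior bound reduces to the boundary bound by the maximum principle with the elementary barrier built from $\sum_i e^{x_i}$ (Lemma \ref{11113.5}); your heavier machinery would still leave you needing the boundary estimate anyway. And in the convergence step, ``compactness forces $M(t)-m(t)\to 0$'' does not follow from monotonicity of $M$ and $m$ alone: one needs a strong maximum principle/Hopf argument on a limiting eternal solution or a Harnack-type oscillation decay, which the paper outsources entirely to the cited Proposition \ref{pppp5.1}.
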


\begin{rem}\label{re1.1}
As is shown in the Remark 2.5 of \cite{WHB}, the oscillation condition  of $f(x)$ can not be omitted.
\end{rem}

The rest of this article is organized as follows. The next section is to present the structure condition for the operator $F_\tau$
and then we can exhibit that Theorem \ref{t1.1} is a corollary of Theorem \ref{tttt1.1}. To prove the main theorem, we verify the short time existence of the parabolic flow in Section 3. Thus Section 4 is devoted to carry out the strictly oblique estimate and the $C^2$ estimate. Eventually, we give the long time existence and convergence of the parabolic flow in Section 5.

Throughout the following, Einstein's convention of summation over repeated indices will be adopted. We denote, for a smooth function $u$,
 $$u_{i}=\dfrac{\partial u}{\partial x_{i}},\ u_{ij}=\dfrac{\partial^{2}u}{\partial x_{i}\partial x_{j}},\ u_{ijk}=\dfrac{\partial^{3}u}{\partial x_{i}\partial x_{j}\partial
x_{k}}, \cdots .$$

\section{ Preliminary step of Theorem \ref{t1.1}}

In the following we are going to describe the analytic structure of the operator $F_\tau$ by direct computation.

It is obvious that $F_\tau(\lambda_1, \cdots, \lambda_n)$, $\tau\in\left(0,\frac{\pi}{2}\right]$ is a smooth symmetric function defined on ${\Gamma}^+_n$. For technical reasons, it is necessary to push further the calculation and we get
\begin{equation*}
F_\tau(0,\cdots,0)=\left\{ \begin{aligned}
& \frac{n\sqrt{a^2+1}}{2b}\ln\frac{a-b}{a+b},  &&0<\tau<\frac{\pi}{4},\\
& -\sqrt{2}n, &&\tau=\frac{\pi}{4},\\
& \frac{n\sqrt{a^2+1}}{b}\arctan\frac{a-b}{a+b}, \quad \quad &&\frac{\pi}{4}<\tau<\frac{\pi}{2},\\
& 0, \,\,&&\tau=\frac{\pi}{2},
\end{aligned} \right.
\end{equation*}
\begin{equation*}
F_\tau(+\infty,\cdots,+\infty)=\left\{ \begin{aligned}
& 0,  &&0<\tau<\frac{\pi}{4},\\
& 0, &&\tau=\frac{\pi}{4},\\
& \frac{n\pi\sqrt{a^2+1}}{4b}, \quad \quad &&\frac{\pi}{4}<\tau<\frac{\pi}{2},\\
& \frac{n\pi}{2}, \,\,&&\tau=\frac{\pi}{2},
\end{aligned} \right.
\end{equation*}
\begin{equation*}
\frac{\partial F_{\tau}}{\partial \lambda_i}=\left\{ \begin{aligned}
& \frac{\sqrt{a^2+1}}{(\lambda_i+a)^2-b^2}, \ \ \ \  &&0<\tau<\frac{\pi}{4},\\
& \frac{\sqrt{2}}{(1+\lambda_{i})^2}, &&\tau=\frac{\pi}{4},\\
& \frac{\sqrt{a^2+1}}{(\lambda_i+a)^2+b^2}, &&\frac{\pi}{4}<\tau<\frac{\pi}{2},\\
& \frac{1}{1+\lambda^2_{i}},  &&\tau=\frac{\pi}{2},
\end{aligned} \right.
\end{equation*}
and
\begin{equation*}
\frac{\partial^2 F_{\tau}}{\partial \lambda_i \partial \lambda_j}=\left\{ \begin{aligned}
& -\frac{2\sqrt{a^2+1}(\lambda_j+a)\delta_{ij}}{\left[(\lambda_i+a)^2-b^2\right]^2}, \ \ \ \  &&0<\tau<\frac{\pi}{4},\\
& -\frac{2\sqrt{2}\delta_{ij}}{(1+\lambda_{i})^3}, &&\tau=\frac{\pi}{4},\\
& -\frac{2\sqrt{a^2+1}(\lambda_j+a)\delta_{ij}}{\left[(\lambda_i+a)^2+b^2\right]^2}, &&\frac{\pi}{4}<\tau<\frac{\pi}{2},\\
& -\frac{2\lambda_j\delta_{ij}}{\left(1+\lambda^2_{i}\right)^2},  &&\tau=\frac{\pi}{2},
\end{aligned} \right.
\end{equation*}
for $i,j=1,\cdots,n$. Then
\begin{equation}\label{e2.1.1.1}
-\infty<F_\tau(0,\cdots,0)< F_\tau(+\infty,\cdots,+\infty)<+\infty,\ \ \tau\in\left(0,\frac{\pi}{2}\right],
\end{equation}
\begin{equation}\label{e2.1.2}
\frac{\partial F_\tau}{\partial \lambda_i}>0,\ \ 1\leq i\leq n \ \  \text{on}\ \  \Gamma^+_n,
\end{equation}
and
\begin{equation}\label{e2.1.3}
\left(\frac{\partial^2 F_\tau}{\partial \lambda_i\partial \lambda_j}\right)\leq 0 \ \  \text{on}\ \ \Gamma^+_n.
\end{equation}

Then for any $(\lambda_{1},\cdots, \lambda_{n})\in \Gamma^{+}_{]s_{1},s_{2}[}$, we have
\begin{equation}\label{e2.1.4}
\sum_{i=1}^n\frac{\partial F_{\tau}}{\partial \lambda_i}\in\left\{ \begin{aligned}
&\left[\frac{\sqrt{a^2+1}}{(s_{1}+a)^2-b^2}, \frac{n\sqrt{a^2+1}}{a^2-b^2}\right], \ \ \ \  &&0<\tau<\frac{\pi}{4},\\
& \left[\frac{\sqrt{2}}{(1+s_{1})^2},n\sqrt{2}\right], &&\tau=\frac{\pi}{4},\\
& \left[\frac{\sqrt{a^2+1}}{(s_{1}+a)^2+b^2},\frac{n\sqrt{a^2+1}}{a^2+b^2}\right], &&\frac{\pi}{4}<\tau<\frac{\pi}{2},\\
&\left[\frac{1}{1+s_{1}^2} ,n\right],  &&\tau=\frac{\pi}{2},
\end{aligned} \right.
\end{equation}
and
\begin{equation}\label{e2.1.6}
\sum_{i=1}^n\frac{\partial F_{\tau}}{\partial \lambda_i}\lambda_i^2\in\left\{ \begin{aligned}
&\left[\frac{s_2^2\sqrt{a^2+1}}{(s_2+a)^2-b^2}, n\sqrt{a^2+1}\right], \ \ \ \  &&0<\tau<\frac{\pi}{4},\\
&\left[\frac{s_2^2\sqrt{2}}{(1+s_2)^2}, n\sqrt{2}\right], &&\tau=\frac{\pi}{4},\\
&\left[\frac{s_2^2\sqrt{a^2+1}}{(s_2+a)^2+b^2}, n\sqrt{a^2+1}\right], &&\frac{\pi}{4}<\tau<\frac{\pi}{2},\\
&\left[\frac{s_2^2}{1+s_2^2}, n\right],  &&\tau=\frac{\pi}{2}.
\end{aligned} \right.
\end{equation}

For any $(\mu_1,\cdots,\mu_n)\in {\Gamma}^+_n$, denote
$$\lambda_i=\frac{1}{\mu_i},\ \ 1\leq i\leq n,$$
and
$$\tilde F_\tau (\mu_1,\cdots,\mu_n):=-F_\tau(\lambda_1,\cdots,\lambda_n).$$
Then
\begin{equation*}
\frac{\partial \tilde F_\tau}{\partial \mu_{i}}=\lambda^{2}_{i}\frac{\partial F_\tau}{\partial \lambda_{i}},\quad \mu^{2}_{i}\frac{\partial \tilde F_\tau}{\partial \mu_{i}}=\frac{\partial F_\tau}{\partial \lambda_{i}},
\end{equation*}
and
\begin{equation*}
\begin{aligned}
\frac{\partial^2 \tilde F_{\tau}}{\partial \mu_i \partial \mu_j}&&&=-\lambda^3_i\left(\lambda_i\frac{\partial^2 F_\tau}{\partial \lambda^2_i}+2\frac{\partial F_\tau}{\partial \lambda_i}\right)\delta_{ij}\\
&&&=\left\{ \begin{aligned}
& -\frac{2\sqrt{a^2+1}(\mu_{i}+a)}{\left[(1+a\mu_{i})^2-(b\mu_{i})^2\right]^2}\delta_{ij}, \ \ \ \  &&0<\tau<\frac{\pi}{4},\\
& -\frac{2\sqrt{2}\delta_{ij}}{(1+\mu_{i})^3}, &&\tau=\frac{\pi}{4},\\
& -\frac{2\sqrt{a^2+1}(\mu_i+a)}{\left[(1+a\mu_i)^2+(b\mu_{i})^2\right]^2}\delta_{ij}, &&\frac{\pi}{4}<\tau<\frac{\pi}{2},\\
& -\frac{2\mu_i\delta_{ij}}{\left(1+\mu^2_{i}\right)^2},  &&\tau=\frac{\pi}{2}.
\end{aligned} \right.
\end{aligned}
\end{equation*}
Therefore, we obtain
\begin{equation*}
\frac{\partial \tilde F_\tau}{\partial \mu_i}>0,\ \ 1\leq i\leq n\ \  \text{on}\ \  \Gamma^+_n,
\end{equation*}
and
\begin{equation}\label{e2.1.11}
\left(\frac{\partial^2 \tilde F_\tau}{\partial \mu_i\partial \mu_j}\right)\leq 0\ \  \text{on}\ \  \Gamma^+_n.
\end{equation}

By the discussion above, we have
\begin{Proposition}\label{prop2.10.2}
For $\tau\in (0,\frac{\pi}{2}]$,  the operator $F_\tau(\lambda)$,  satisfies the structure conditions (\ref{e1.2.0})-(\ref{e1.4}).
\end{Proposition}

In the next three sections, we are going to prove Theorem \ref{tttt1.1} through the short time existence of the parabolic flow, the strictly oblique estimate and the $C^2$ estimate based on a Schn$\ddot{\text{u}}$rer's convergence result.

\section{The short time existence of the parabolic flow}

Let $\mathscr{P}_n$ be the set of positive definite symmetric $n\times n$ matrices, and $\lambda_{1}(A)$, $\cdots$, $\lambda_{n}(A)$ be the eigenvalues of $A$.  For $A=(a_{ij})\in \mathscr{P}_n$, denote
$$F[A]:=F\left(\lambda_{1}(A),\cdots, \lambda_{n}(A)\right)$$
and
$$(a^{ij})=(a_{ij})^{-1},\,\,\, F^{ij}=\frac{\partial F}{\partial a_{ij}},\,\,\,F^{ij,rs}=\frac{\partial^{2} F}{\partial a_{ij}\partial a_{rs}}.$$
 Let us recall the relevant Sobolev spaces( cf. Chapter 1 in  \cite{W}). For every multi-index $\beta=(\beta_{1},\beta_{2},\cdots, \beta_{n})$, $\beta_{i}\geq 0$ for $i=1,2,\cdots,n$ with length $|\beta|=\sum^{n}_{i=1}\beta_{i}$ and $j\geq 0$, we set
$$D^{\beta}u:=\frac{\partial^{|\beta|}u}{\partial x_{1}^{\beta_{1}}\partial x_{2}^{\beta_{2}}\cdots\partial x_{n}^{\beta_{n}}},\quad
D^{\beta}D_{t}^{j}u:=\frac{\partial^{|\beta|+j}u}{\partial x_{1}^{\beta_{1}}\partial x_{2}^{\beta_{2}}\cdots\partial x_{n}^{\beta_{n}}\partial t^{j}}.$$
We state  the definition of the usual functional spaces as follows($k\geq 0$):
$$C^{k}(\Omega)=\{u:\Omega\rightarrow \mathbb{R}:\forall \beta,\ |\beta|\leq k,\ D^{\beta}u\  \text{is continuous in}\ \Omega\},$$
$$C^{k}(\bar{\Omega})=\{u\in C^{k}(\Omega) :\forall \beta, |\beta|\leq k, D^{\beta}u \text{ can be extended by continuity to } \partial\Omega\},$$
$$C^{k,\frac{k}{2}}(\Omega_{T})=\{u:\Omega_{T}\rightarrow \mathbb{R}:\forall \beta,j\geq 0, |\beta|+2j\leq k, D^{\beta}D^{j}_{t}u\  \text{is continuous in}\ \Omega_{T}\},$$
$$C^{k,\frac{k}{2}}(\bar{\Omega}_{T})=\{u\in C^{k,\frac{k}{2}}(\Omega_{T}) :\forall \beta,j\geq 0, |\beta|+2j\leq k, D^{\beta}D^{j}_{t}u\ \text{can be extended by continuity to}\ \partial\Omega_{T}\}.$$
Moreover $C^{k}(\bar{\Omega})$ and $C^{k,\frac{k}{2}}(\bar{\Omega}_{T})$ are Banach spaces equipped with the norm
$$\|u\|_{C^{k}(\bar{\Omega})}=\sum_{|\beta|\leq k}\sup_{\bar{\Omega}}|D^{\beta}u|$$
and
$$\|u\|_{C^{k,\frac{k}{2}}(\bar{\Omega}_{T})}=\sum_{j\geq 0,|\beta|+2j\leq k}\sup_{\bar{\Omega}_{T}}|D^{\beta}D^{j}_{t}u|$$
respectively.

We now present the definition of H\"{o}lder spaces. Let $\alpha\in [0,1]$, define the $\alpha$-H\"{o}lder coefficient of $u$ in $\Omega$ as
$$[u]_{\alpha, \Omega}=\sup_{x\neq y, x,y\in\Omega}\frac{|u(x)-u(y)|}{|x-y|^{\alpha}}.$$
If $[u]_{\alpha, \Omega}<+\infty,$ then we call $u$ H\"{o}lder continuous with exponent $\alpha$ in $\Omega.$
If there are not ambiguity about the domains $\Omega$, we denote $[u]_{\alpha, \Omega}$ by $[u]_{\alpha}$. Similarly, the $(\alpha,\frac{\alpha}{2})$-H\"{o}lder coefficient of $u$ in $\Omega_{T}$ can be defined by
$$[u]_{\alpha,\frac{\alpha}{2}, \Omega_{T}}=\sup_{(x,t)\neq (y,\tau), (x,t),(y,\tau)\in\Omega_{T}}\frac{|u(x,t)-u(y,\tau)|}{|x-y|^{\alpha}+|t-\tau|^{\frac{\alpha}{2}}},$$
and $u$ is H\"{o}lder continuous with exponent $(\alpha,\frac{\alpha}{2})$ in $\Omega_{T}$  if $[u]_{\alpha,\frac{\alpha}{2}, \Omega_{T}} <+\infty.$
Meanwhile, we denote $[u]_{\alpha,\frac{\alpha}{2}, \Omega_{T}}$ by  $[u]_{\alpha,\frac{\alpha}{2}}$.   We denote $C^{k+\alpha}(\bar{\Omega})$ as the set of functions belonging to
$C^{k}(\bar{\Omega})$ whose $k$-order partial derivatives are H\"{o}lder continuous with exponent $\alpha$ in $\Omega$ and $C^{k+\alpha}(\bar{\Omega})$ is
a Banach space equipped with the following norm
$$\|u\|_{C^{k+\alpha}(\bar{\Omega})}=\|u\|_{C^{k}(\bar{\Omega})}+[u]_{k+\alpha},$$
where
$$[u]_{k+\alpha}=\sum_{|\beta|=k}[D^{\beta}u]_{\alpha}.$$
Likewise, we denote $C^{k+\alpha,\frac{k+\alpha}{2}}(\bar{\Omega}_{T})$ as the set of functions belonging to $C^{k,\frac{k}{2}}(\bar{\Omega}_{T})$ whose $(k,\frac{k}{2})$-order partial derivatives are H\"{o}lder continuous with exponent $(\alpha,\frac{\alpha}{2})$ in $\Omega_{T}$ and $C^{k+\alpha,\frac{k+\alpha}{2}}(\bar{\Omega}_{T})$ is a Banach space equipped with the following norm
$$\|u\|_{C^{k+\alpha,\frac{k+\alpha}{2}}(\bar{\Omega}_{T})}=\|u\|_{C^{k,\frac{k}{2}}(\bar{\Omega}_{T})}+[u]_{k+\alpha, \frac{k+\alpha}{2}},$$
where
$$[u]_{k+\alpha,\frac{k+\alpha}{2}}=\sum_{|\beta|+2j=k}[D^{\beta}D^{j}_{t}u]_{\alpha,\frac{\alpha}{2}}.$$

By the methods on the second boundary value problems for equations of Monge-Amp\`{e}re type \cite{JU}, the parabolic boundary condition in (\ref{eeee1.2}) can be reformulated as
$$h(Du)=0,\qquad x\in \partial\Omega,\quad t>0,$$
where we need
\begin{deff}
A smooth function $h:\mathbb{R}^n\rightarrow\mathbb{R}$ is called the defining function of $\tilde{\Omega}$ if
$$\tilde{\Omega}=\{p\in\mathbb{R}^{n} : h(p)>0\},\quad |Dh|_{{\partial\tilde{\Omega}}}=1,$$
and there exists $\theta>0$ such that for any $p=(p_{1},\cdots, p_{n})\in \tilde{\Omega}$ and $\xi=(\xi_{1}, \cdots, \xi_{n})\in \mathbb{R}^{n}$,
$$\frac{\partial^{2}h}{\partial p_{i}\partial p_{j}}\xi_{i}\xi_{j}\leq -\theta|\xi|^{2}.$$
\end{deff}
We can also define $\tilde{h}$ as the defining function of $\Omega$. That is,
$$\Omega=\{\tilde{p}\in\mathbb{R}^{n} : \tilde{h}(\tilde{p})>0\},\ \ \ |D\tilde{h}|_{\partial\Omega}=1, \ \ \ D^2\tilde{h}\leq -\tilde{\theta}I,$$
where $\tilde{\theta}$ is some positive constant. Thus the parabolic flow (\ref{eeee1.1})-(\ref{eeee1.3}) is equivalent to the evolution problem
\begin{equation}\label{eeee2.1}
\left\{ \begin{aligned}\frac{\partial u}{\partial t}&=F\left(\lambda (D^2u)\right)-f(x),\ \ && t>0,\  x\in \Omega, \\
h(Du)&=0,&& t>0,\  x\in\partial\Omega,\\
 u&=u_{0}, &&  t=0,\  x\in \Omega.
\end{aligned} \right.
\end{equation}

To establish the short time existence of classical solutions of (\ref{eeee2.1}),  we use the inverse function theorem in Fr$\acute{e}$chet spaces and the theory of linear parabolic equations for oblique boundary condition. The method is along the idea of proving  the short time existence of convex solutions on the second boundary value problem for Lagrangian mean curvature flow \cite{HR}. We include the details for the convenience of the readers.

\begin{lemma}\label{l1.10}(I. Ekeland, see Theorem 2 in  \cite{IE}.)
Let $X$ and $Y$ be Banach spaces with the norms $\|\cdot\|_{1}$ and $\|\cdot\|_{2}$ respectively. Suppose $$\hbar: X\rightarrow Y$$
 is continuous and G\^{a}teaux-differentiable, with
$\hbar[0]=0$. Assume that the derivative $D\hbar[x]$ has a right inverse $\mathrm{T}[x]$, uniformly bounded in
a neighbourhood of $0$ in $X$. That is, for any $y\in Y$,
$$ D\hbar[x]\mathrm{T}[x]y=y ,$$
and there exist $R>0$ and $m>0$ such that
$$\|x\|_{1}\leq R\Longrightarrow \|\mathrm{T}[x]\|_{2}\leq m.$$
For every $y\in Y$, if
$$\| y\|_{2}<\frac{R}{m},$$
then there exists some $x\in X$ such that
$$\|x\|_{2}< R,$$
and
$$\hbar[x]=y.$$
\end{lemma}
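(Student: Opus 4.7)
My plan is to obtain Lemma~\ref{l1.10} from Ekeland's variational principle applied to the error functional $\varphi(x):=\|\hbar[x]-y\|_2$ on the complete metric space $\bar{B}_R:=\{x\in X:\|x\|_1\le R\}$ equipped with the distance induced by $\|\cdot\|_1$. The underlying strategy is that any approximate critical point of $\varphi$ produced by Ekeland's principle must already satisfy $\varphi=0$, because the right inverse $\mathrm{T}[x]$ supplies a descent direction along which $\varphi$ decreases at unit rate.

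\textbf{Step 1 (Ekeland with a carefully chosen radius).} The functional $\varphi$ is continuous (since $\hbar$ is), nonnegative, bounded below, and takes the value $\|y\|_2$ at $x=0$. The hypothesis $\|y\|_2<R/m$ lets me pick a parameter $\lambda$ with $m\|y\|_2<\lambda<R$, and Ekeland's principle on $\bar{B}_R$ then furnishes $x_\lambda$ with $\|x_\lambda\|_1\le\lambda<R$, $\varphi(x_\lambda)\le\|y\|_2$, and
$$\varphi(w)\ge \varphi(x_\lambda)-\frac{\|y\|_2}{\lambda}\,\|w-x_\lambda\|_1 \qquad \text{for every } w\in\bar{B}_R.$$
The strict inequality $\|x_\lambda\|_1<R$ is crucial: it places $x_\lambda$ in the open ball, so every small perturbation remains an admissible competitor in the display above.

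\textbf{Step 2 (Descent via the right inverse).} Suppose for contradiction that $\varphi(x_\lambda)>0$. I set
$$v:=\mathrm{T}[x_\lambda]\!\left(-\frac{\hbar[x_\lambda]-y}{\|\hbar[x_\lambda]-y\|_2}\right)\in X,$$
so that the right-inverse property gives $D\hbar[x_\lambda]v=-(\hbar[x_\lambda]-y)/\|\hbar[x_\lambda]-y\|_2$ and $\|v\|_1\le m$. G\^ateaux differentiability combined with the triangle inequality produces the expansion
$$\varphi(x_\lambda+tv)=\varphi(x_\lambda)-t+o(t)\qquad\text{as }t\to 0^+.$$
Substituting $w=x_\lambda+tv$ into the Ekeland inequality and dividing by $t$ yields $1\le m\|y\|_2/\lambda$, i.e.\ $\lambda\le m\|y\|_2$, contradicting the choice $\lambda>m\|y\|_2$. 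Hence $\varphi(x_\lambda)=0$, which is exactly $\hbar[x_\lambda]=y$, with $\|x_\lambda\|_1<R$.

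\textbf{Main obstacle.} The subtle point is the two-sided selection of the Ekeland parameter: $\lambda$ must exceed $m\|y\|_2$ so that the stationarity slope $\|y\|_2/\lambda$ is strictly smaller than the unit descent rate $1/m$, yet must stay below $R$ so that $x_\lambda$ is strictly interior to $\bar{B}_R$ where two-sided perturbations are admissible. Both constraints are compatible precisely when $\|y\|_2<R/m$, which explains the sharp form of the smallness hypothesis. A secondary technicality is that the first-order expansion of $\varphi$ along the fixed direction $v$ needs only G\^ateaux (not Fr\'echet) differentiability, together with the elementary estimate $\|(\hbar[x_\lambda]-y)(1-t/\|\hbar[x_\lambda]-y\|_2)+o(t)\|_2=\varphi(x_\lambda)-t+o(t)$.
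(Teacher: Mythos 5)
Your argument is correct and is essentially Ekeland's own proof of the cited result (the paper simply quotes Theorem 2 of \cite{IE} without reproducing the proof): one applies the variational principle to $\varphi(x)=\|\hbar[x]-y\|_2$ on $\bar B_R$ with the slope parameter $\|y\|_2/\lambda$ tuned strictly below the descent rate $1/m$ supplied by the right inverse, and the interiority $\|x_\lambda\|_1\le\lambda<R$ legitimizes the perturbation $x_\lambda+tv$. Note only that the conclusion $\|x\|_2<R$ in the statement is a typo for $\|x\|_1<R$, which is what your construction delivers.
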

As an application of Lemma \ref{l1.10}, we obtain the following inverse function theorem which will be used to prove the short time existence result for equation (\ref{eeee2.1}).

\begin{lemma}\label{l1.11}
(See Lemma 2.2 in \cite{HRY}.)
Let $X$ and $Y$ be Banach spaces with the norms $\|\cdot\|_{1}$ and $\|\cdot\|_{2}$ respectively. Suppose $$J: X\rightarrow Y$$
 is continuous and G\^{a}teaux-differentiable, with
$J(v_{0})=w_{0}$. Assume that the derivative $DJ[v]$ has a right inverse $L[v]$, uniformly bounded in
a neighbourhood of $v_{0}$. That is, for any $y\in Y$,
$$DJ[v]L[v]y=y,$$
and there exist $R>0$ and $m>0$ such that
$$\|v-v_{0}\|_{1}\leq R\Longrightarrow \|L[v]\|_{2}\leq m.$$
For every $w\in Y$, if
$$\| w-w_{0}\|_{2}<\frac{R}{m},$$
then there exists some $v\in X$ such that
$$\|v-v_{0}\|_{1}< R,$$
and
$$J(v)=w.$$
\end{lemma}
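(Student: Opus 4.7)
The plan is to reduce Lemma 1.11 to Lemma 1.10 (Ekeland's inverse function theorem) by a simple change of base point. Since Lemma 1.10 is formulated for a map sending $0$ to $0$, I would define a translated auxiliary map $\hbar: X \to Y$ by
$$\hbar(x) := J(v_0 + x) - w_0,$$
so that $\hbar(0) = J(v_0) - w_0 = 0$. Translation by the fixed vector $v_0$ and subtraction of the fixed vector $w_0$ are affine isometries, so $\hbar$ inherits continuity and G\^ateaux-differentiability from $J$, with the chain rule giving $D\hbar[x] = DJ[v_0+x]$.

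Next I would transport the right-inverse assumption. Define $\mathrm{T}[x] := L[v_0+x]$; then by the hypothesis on $L$,
$$D\hbar[x]\,\mathrm{T}[x]\,y = DJ[v_0+x]\,L[v_0+x]\,y = y \quad \text{for all } y \in Y,$$
so $\mathrm{T}[x]$ is a right inverse of $D\hbar[x]$. The uniform bound also transfers: if $\|x\|_1 \le R$, then $\|(v_0+x)-v_0\|_1 = \|x\|_1 \le R$, hence $\|\mathrm{T}[x]\|_2 = \|L[v_0+x]\|_2 \le m$. Thus $\hbar$ satisfies exactly the hypotheses of Lemma 1.10 with the same constants $R$ and $m$.

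Now given $w \in Y$ with $\|w-w_0\|_2 < R/m$, set $y := w - w_0$, so $\|y\|_2 < R/m$. Applying Lemma 1.10 to $\hbar$ yields some $x \in X$ with $\|x\|_1 < R$ and $\hbar(x) = y$. Letting $v := v_0 + x$, one checks that $\|v-v_0\|_1 = \|x\|_1 < R$ and
$$J(v) = \hbar(x) + w_0 = y + w_0 = w,$$
which is exactly the desired conclusion.

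There is essentially no obstacle here; the argument is a routine translation reduction. The only point worth verifying carefully is that all three analytic requirements of Lemma 1.10 (the vanishing at the origin, the existence of the right inverse of the derivative, and the uniform operator-norm bound on a ball) really do transfer to $\hbar$ under the shift $x \mapsto v_0 + x$, which is immediate from the chain rule and from translation-invariance of the norm on $X$.
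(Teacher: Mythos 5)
Your proposal is correct and is exactly the intended argument: the paper presents Lemma \ref{l1.11} as "an application of Lemma \ref{l1.10}" (deferring the written proof to Lemma 2.2 of \cite{HRY}), and that application is precisely the translation reduction $\hbar(x):=J(v_{0}+x)-w_{0}$ that you carry out. All hypothesis transfers you check (chain rule for the Gâteaux derivative, right inverse $\mathrm{T}[x]=L[v_{0}+x]$, and the uniform bound on the ball) are the right ones, so nothing is missing.
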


We will use the following short time existence and regularity resuts for linear second order  parabolic equation with strict oblique  boundary condition:
\begin{lemma}\label{l1.2}
(See Theorem 8.8 and 8.9 in \cite{GM}.) Assume that $\tilde f\in C^{\alpha_{0},\frac{\alpha_{0}}{2}}(\bar{\Omega}_{T})$ for some $0<\alpha_{0}<1$, $T>0$, and $G(x,p)$, $G_{p}(x,p)$ are in $C^{1+\alpha_{0}}(\Xi)$ for any compact subset $\Xi$ of $\partial\Omega\times\mathbb{R}^{n}$ such that $\inf_{\partial\Omega}\langle G_{p}, \nu\rangle>0$ where $\nu$ is the inner normal vector of $\partial\Omega$. Let $u_{0}\in C^{2+\alpha_{0}}(\bar{\Omega})$ be strictly convex and satisfy $G(x, Du_{0})=0.$ Then there exists $T'>0$ $(T'\leq T)$ such that we can find a unique solution which is strictly convex in $x$ variable in the class $C^{2+\alpha_{0},\frac{2+\alpha_{0}}{2}}(\bar{\Omega}_{T'})$ to the following equations
\begin{equation*}
\left\{ \begin{aligned}\frac{\partial u}{\partial t}-a^{ij}(x,t) u_{ij}&=\tilde f(x,t),\ \
&& T'>t>0,\  x\in \Omega, \\
G(x,Du)&=0,&& T'>t>0,\  x\in\partial\Omega,\\
 u&=u_{0}, && t=0,\  x\in \Omega,
\end{aligned} \right.
\end{equation*}
where  $a^{ij}(x,t)\in C^{\alpha_{0},\frac{\alpha_{0}}{2}}(\bar{\Omega}_{T})$, $1\leq i,j\leq n$ and $[a^{ij}(x,t)]\geq a_{0}\text{I}$ for some positive constant $a_{0}$.
\end{lemma}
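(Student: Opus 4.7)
The plan is to deduce Lemma \ref{l1.2} from the abstract inverse function theorem (Lemma \ref{l1.11}) combined with the standard Schauder theory for linear parabolic equations with \emph{linear} oblique boundary conditions. First I set up the Banach spaces: let
\[
X=C^{2+\alpha_{0},\frac{2+\alpha_{0}}{2}}(\bar{\Omega}_{T'}), \qquad
Y=C^{\alpha_{0},\frac{\alpha_{0}}{2}}(\bar{\Omega}_{T'})\times C^{1+\alpha_{0},\frac{1+\alpha_{0}}{2}}(\partial\Omega\times[0,T'])\times C^{2+\alpha_{0}}(\bar{\Omega}),
\]
and define the nonlinear map
\[
J(u)=\bigl(u_{t}-a^{ij}(x,t)u_{ij},\ G(x,Du)\big|_{\partial\Omega\times[0,T']},\ u\big|_{t=0}\bigr).
\]
Its G\^ateaux derivative is
\[
DJ(u)[\eta]=\bigl(\eta_{t}-a^{ij}(x,t)\eta_{ij},\ G_{p}(x,Du)\cdot D\eta\big|_{\partial\Omega\times[0,T']},\ \eta\big|_{t=0}\bigr),
\]
which at each $u$ is a linear parabolic problem with linear oblique boundary condition whenever $\inf_{\partial\Omega}\langle G_{p}(\cdot,Du),\nu\rangle>0$.

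Next I pick the base point $v_{0}\in X$ as the unique solution of the linearised problem obtained by freezing the boundary condition at $u_{0}$:
\begin{equation*}
\left\{\begin{aligned}
(v_{0})_{t}-a^{ij}(v_{0})_{ij}&=\tilde f(x,t),\quad && x\in\Omega,\ 0<t<T',\\
G_{p}(x,Du_{0})\cdot D(v_{0}-u_{0})&=0,\quad && x\in\partial\Omega,\ 0<t<T',\\
v_{0}&=u_{0},\quad && t=0.
\end{aligned}\right.
\end{equation*}
The hypothesis $\inf_{\partial\Omega}\langle G_{p}(\cdot,Du_{0}),\nu\rangle>0$, $G(x,Du_{0})=0$, and the regularity of the data together with the compatibility condition at the parabolic corner guarantee, by the Schauder theory for strictly oblique linear parabolic equations (cf.\ Lieberman, Theorem 4.31/5.18), that $v_{0}\in X$ exists uniquely with quantitative Schauder estimates. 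Then
\[
J(v_{0})=\bigl(\tilde f,\ G(x,Dv_{0}),\ u_{0}\bigr),\qquad w:=(\tilde f,0,u_{0}),\qquad w-w_{0}=\bigl(0,-G(x,Dv_{0}),0\bigr).
\]

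Since $v_{0}|_{t=0}=u_{0}$ gives $G(x,Dv_{0})|_{t=0}=0$, interpolating the parabolic H\"older norm against $T'$ shows $\|G(x,Dv_{0})\|_{C^{1+\alpha_{0},(1+\alpha_{0})/2}(\partial\Omega\times[0,T'])}\to 0$ as $T'\to 0^{+}$. For any $u$ in a small $X$-ball $B_{R}(v_{0})$, the obliqueness $\langle G_{p}(x,Du),\nu\rangle\geq \tfrac12\inf_{\partial\Omega}\langle G_{p}(\cdot,Du_{0}),\nu\rangle>0$ persists by continuity, and the Schauder theory again yields a right inverse $L[u]$ of $DJ(u)$ whose operator norm is bounded uniformly by some $m=m(R)$ (uniformity follows because the H\"older norms of the oblique coefficient $G_{p}(\cdot,Du)$ are uniformly controlled on $B_{R}(v_{0})$). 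Choosing $T'$ small so that $\|w-w_{0}\|_{Y}<R/m$, Lemma \ref{l1.11} produces $u\in X$ with $J(u)=w$, giving the desired solution. Strict convexity of $D^{2}u$ on $[0,T']$ for $T'$ possibly smaller follows because $D^{2}u_{0}\geq c_{0}I>0$ and $\|D^{2}u-D^{2}u_{0}\|_{C^{0}}\to 0$ as $T'\to 0$. Uniqueness is immediate: the difference of two solutions satisfies a linear parabolic equation with zero initial datum and a linear oblique boundary condition obtained by integrating $G_{p}$ along the segment joining the two gradients, so the maximum principle for oblique problems forces it to vanish.

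The main obstacle I anticipate is establishing the \emph{uniform} bound $\|L[u]\|\leq m$ over the ball $B_{R}(v_{0})$, because the linear oblique operator varies with $u$ through $a^{ij}$ (constant in $u$, but nontrivial in $(x,t)$) and crucially through $G_{p}(x,Du)$, whose parabolic H\"older norm must stay bounded and whose obliqueness constant must stay bounded below. Handling this cleanly requires a careful verification that the Schauder constants in the oblique boundary problem depend only on the H\"older norm of $G_{p}$ and the obliqueness constant, both of which remain under control on a small $X$-ball by the embedding $X\hookrightarrow C^{1+\alpha_{0},(1+\alpha_{0})/2}$. Once this is in place, the rest is a direct application of Lemma \ref{l1.11}.
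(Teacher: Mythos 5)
The paper does not prove this lemma at all: it is quoted verbatim from Lieberman's book (Theorems 8.8 and 8.9 in \cite{GM}), so any proof you give is necessarily a reconstruction. Your reconstruction is the natural one, and it mirrors almost exactly the argument the paper itself runs one level up in the proof of Proposition \ref{pppp1.1} (Ekeland's inverse function theorem plus linear oblique Schauder theory, with a base point built by freezing the boundary operator at $u_0$). The skeleton is sound, and your identification of the uniform right-inverse bound as the delicate point is correct.

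There is, however, one step that fails as written: the claim that
$\|G(x,Dv_0)\|_{C^{1+\alpha_0,(1+\alpha_0)/2}(\partial\Omega\times[0,T'])}\to 0$ as $T'\to 0^+$ ``by interpolating the parabolic H\"older norm against $T'$.'' Vanishing of the initial trace kills the sup norms, but it does not make the \emph{top} H\"older seminorm $[D_xG(x,Dv_0)]_{\alpha_0,\alpha_0/2}$ small: a function like $t^{\alpha_0/2}$ has unit $\tfrac{\alpha_0}{2}$-H\"older seminorm on every interval $[0,T']$. The interpolation inequality only gives decay after trading down the exponent, i.e.\ $[g]_{\alpha,\alpha/2}\leq C\,T'^{(\alpha_0-\alpha)/2}[g]_{\alpha_0,\alpha_0/2}$ for $\alpha<\alpha_0$ --- this is precisely the computation the paper performs in (\ref{eeee2.00})--(\ref{eeee2.02}), and it is why the paper's Proposition \ref{pppp1.1} works in the spaces $C^{2+\alpha,\frac{2+\alpha}{2}}$ with $\alpha<\alpha_0$ rather than $\alpha_0$. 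As stated, your application of Lemma \ref{l1.11} therefore only produces a solution in $C^{2+\alpha,\frac{2+\alpha}{2}}(\bar\Omega_{T'})$ for some $\alpha<\alpha_0$; to reach the asserted $C^{2+\alpha_0,\frac{2+\alpha_0}{2}}$ regularity you need an additional a posteriori bootstrap (freeze the now-known solution in the boundary condition and reapply the linear oblique Schauder estimates with data in $C^{\alpha_0}$). A smaller point worth flagging: for $DJ[u]$ to admit a right inverse on all of $Y$ you must restrict $Y$ to the closed subspace of triples $(\bar f,\bar g,\bar w)$ satisfying the zeroth-order compatibility condition $G_p(x,Du(x,0))\cdot D\bar w=\bar g(x,0)$ on $\partial\Omega$; otherwise the linear oblique problem has no $C^{2+\alpha}$ solution. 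Your target increment $(0,-G(x,Dv_0),0)$ does lie in that subspace, so the fix is cosmetic, but the argument should say so.
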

By the property of $C^{2+\alpha_{0},\frac{2+\alpha_{0}}{2}}(\bar{\Omega}_{T'})$ and $u(x,t)|_{t=0}=u_{0}(x)$, we obtain
\begin{equation}\label{eeee2.00}
\lim_{t\rightarrow 0}\| u(\cdot,t)-u_{0}(\cdot)\|_{C^{2+\alpha_{0}}(\bar{\Omega})}=0.
\end{equation}
For any $\alpha<\alpha_{0}$, we have
\begin{equation*}
\begin{aligned}
&\frac{|(D^{2}u(x,t)-D^{2}u_{0}(x))-(D^{2}u(y,\tau)-D^{2}u_{0}(y))|}{|x-y|^{\alpha}+|t-\tau|^{\frac{\alpha}{2}}} \\
&\leq \frac{|(D^{2}u(x,t)-D^{2}u_{0}(x))-(D^{2}u(y,t)-D^{2}u_{0}(y))|}{|x-y|^{\alpha}}\\
&+|t-\tau|^{\frac{\alpha_{0}-\alpha}{2}}\frac{|(D^{2}u(y,t)-D^{2}u_{0}(y))-(D^{2}u(y,\tau)-D^{2}u_{0}(y))|}{|t-\tau|^{\frac{\alpha_{0}}{2}}}.
\end{aligned}
\end{equation*}
Then we get
\begin{equation}\label{eeee2.01}
\begin{aligned}
\| D^{2}u-D^{2}u_{0}\|_{C^{\alpha,\frac{\alpha}{2}}(\bar{\Omega}_{T'})}\leq &\max_{0\leq t\leq T'}\| D^{2}u(\cdot,t)-D^{2}u_{0}(\cdot)\|_{C^{\alpha}(\bar{\Omega})}\\
&+T'^{\frac{\alpha-\alpha_{0}}{2}}\| D^{2}u-D^{2}u_{0}\|_{C^{\alpha_{0},\frac{\alpha_{0}}{2}}(\bar{\Omega}_{T'})}.
\end{aligned}
\end{equation}
Combining  (\ref{eeee2.00}) with (\ref{eeee2.01}), we obtain
\begin{equation}\label{eeee2.02}
\lim_{T'\rightarrow 0}\| D^{2}u-D^{2}u_{0}\|_{C^{\alpha,\frac{\alpha}{2}}(\bar{\Omega}_{T'})}=0,
\end{equation}
which will be used later.

According to the proof in \cite{JU}, we can verify the oblique boundary condition.
\begin{lemma}\label{l1.3}(See J. Urbas \cite{JU}.) Let $\nu=(\nu_{1},\nu_{2}, \cdots,\nu_{n})$ be the unit inward normal vector of $\partial\Omega$. If $u\in C^{2}(\bar{\Omega})$  with $D^{2}u\geq0$, then there holds $h_{p_{k}}(Du)\nu_{k}\geq0$.
\end{lemma}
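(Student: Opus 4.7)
The plan is to exploit the boundary condition built into the parabolic problem (\ref{eeee2.1}), namely $h(Du(x))=0$ on $\partial\Omega$, together with $Du(\bar\Omega)\subset\bar{\tilde\Omega}$, which forces $h(Du(x))\ge 0$ throughout $\bar\Omega$. Thus the scalar function $\phi(x):=h(Du(x))$ attains its minimum value $0$ precisely on $\partial\Omega$, and the lemma will follow from the first-order information this provides, combined with the convexity of $u$.

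First I would analyze $D\phi$ at a boundary point $x_0\in\partial\Omega$. For any tangent vector $\tau$ to $\partial\Omega$ at $x_0$, differentiating $\phi\equiv 0$ on $\partial\Omega$ yields $\phi_i\tau_i=h_{p_k}(Du)\,u_{ki}\,\tau_i=0$, so the vector $V_i:=h_{p_k}(Du(x_0))\,u_{ki}(x_0)$ is orthogonal to $T_{x_0}\partial\Omega$, and we may write $V=\alpha\,\nu$ for some scalar $\alpha$. On the other hand, since $\phi\ge 0$ attains its minimum at $x_0$, the one-sided directional derivative along the inner normal is nonnegative, i.e. $\phi_i\nu_i=V\cdot\nu=\alpha\ge 0$. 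In matrix form this reads
$$D^2u(x_0)\cdot Dh(Du(x_0))=\alpha\,\nu(x_0),\qquad \alpha\ge 0,$$
using the symmetry $u_{ki}=u_{ik}$.

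The final step is to invert this relation. In the applications of the lemma the solution $u$ is uniformly convex, so $(u_{ij})$ is strictly positive definite with positive-definite inverse $(u^{ij})$; multiplying the identity above by $(u^{ij})$ gives $h_{p_\ell}(Du(x_0))=\alpha\,u^{\ell i}\,\nu_i$, and therefore
$$h_{p_\ell}(Du(x_0))\,\nu_\ell=\alpha\,\nu_\ell\,u^{\ell i}\,\nu_i\ge 0,$$
which is the desired obliqueness. The nominal hypothesis $D^2u\ge 0$ is recovered from the strict case by the approximation $u\rightsquigarrow u+\varepsilon|x|^2/2$ followed by $\varepsilon\to 0$. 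The principal structural obstacle is conceptual rather than computational: $\nu(x_0)$ and $Dh(Du(x_0))$ a priori sit on entirely different boundaries $\partial\Omega$ and $\partial\tilde\Omega$ and are not intrinsically comparable; what links them is the composition $\phi=h\circ Du$, and strict positivity of $D^2u$ is what converts the ``pushed-forward'' inequality $V\cdot\nu\ge 0$ into the genuine obliqueness $Dh(Du)\cdot\nu\ge 0$ needed to apply Lemma \ref{l1.2}.
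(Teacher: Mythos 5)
The paper itself gives no proof of this lemma (it is quoted from Urbas), so the comparison is with the standard argument there; your main line of reasoning is exactly that argument, and for uniformly convex $u$ it is complete and correct: $\phi=h(Du)\geq 0$ in $\Omega$ with $\phi=0$ on $\partial\Omega$ gives $D\phi(x_0)=D^2u(x_0)\,Dh(Du(x_0))=\alpha\nu$ with $\alpha=\phi_\nu\geq 0$, and inverting $D^2u$ yields $h_{p_\ell}\nu_\ell=\alpha\,u^{\ell i}\nu_\ell\nu_i\geq 0$. This is also precisely the computation behind the identity (\ref{eeee3.0}) used later in the paper.

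The genuine gap is your last step, the reduction of the semidefinite case $D^2u\geq 0$ to the strict case via $u\rightsquigarrow u+\varepsilon|x|^2/2$. That perturbation destroys the two facts your whole argument rests on: $D(u+\varepsilon|x|^2/2)=Du+\varepsilon x$ no longer maps $\Omega$ into $\tilde\Omega$ and no longer satisfies $h(D u_\varepsilon)=0$ on $\partial\Omega$, so $\phi_\varepsilon:=h(Du_\varepsilon)$ is not nonnegative with boundary minimum, and the relation $D^2u_\varepsilon\,Dh(Du_\varepsilon)=\alpha_\varepsilon\nu$ with $\alpha_\varepsilon\geq 0$ is lost. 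Equivalently, in the degenerate case your argument only yields $\alpha=\nu^TD^2u\,Dh\geq 0$; if $\alpha>0$ you can still conclude $Dh\cdot\nu=\alpha^{-1}Dh^TD^2u\,Dh\geq 0$ without inverting, but if $\alpha=0$ (i.e. $Dh(Du(x_0))\in\ker D^2u(x_0)$) the relation carries no information about the sign of $Dh\cdot\nu$. A correct treatment of $D^2u\geq 0$ avoids second derivatives altogether: suppose $b:=Dh(Du(x_0))$ satisfies $b\cdot\nu<0$; then $x_0-tb\in\Omega$ for small $t>0$, and writing $q_t=Du(x_0-tb)\in\tilde\Omega$, concavity of $h$ gives $b\cdot(q_t-p_0)\geq h(q_t)>0$, while monotonicity of $Du$ (convexity of $u$) gives $(q_t-p_0)\cdot(-tb)\geq 0$, i.e. $b\cdot(q_t-p_0)\leq 0$, a contradiction. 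Since every application in the paper is to strictly convex $u$, the flaw does not propagate, but as a proof of the lemma as stated the final step needs to be replaced.
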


Now we can prove the short time existence of solutions of (\ref{eeee2.1}), which is equivalent to the problem (\ref{eeee1.1})-(\ref{eeee1.3}).
\begin{Proposition}\label{pppp1.1}
According to the conditions in Theorem \ref{tttt1.1}, there exist some $T''>0$ and $u\in C^{2+\alpha,\frac{2+\alpha}{2}}(\bar{\Omega}_{T''})$ which depend only on $\Omega$, $\tilde{\Omega}$, $u_0$, $f$, $\delta$ and $F$, such that $u$ is a solution of (\ref{eeee2.1}) and is strictly convex in $x$ variable.
\end{Proposition}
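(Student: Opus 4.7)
The plan is to apply the inverse function theorem in Banach spaces (Lemma~\ref{l1.11}) to a nonlinear map $J$ that encodes the parabolic system (\ref{eeee2.1}), with the right inverse of the linearization supplied by the linear oblique parabolic theory of Lemma~\ref{l1.2}. The strict obliqueness of the boundary operator at a uniformly convex $u$ is the content of Lemma~\ref{l1.3} and will be used to verify the structural hypothesis of Lemma~\ref{l1.2}.

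Fix $\alpha\in(0,\alpha_0)$ and a provisional $T>0$, and introduce the Banach spaces
\begin{align*}
X &= C^{2+\alpha,(2+\alpha)/2}(\bar\Omega_T),\\
Y &= C^{\alpha,\alpha/2}(\bar\Omega_T)\times C^{1+\alpha,(1+\alpha)/2}(\partial\Omega\times[0,T])\times C^{2+\alpha}(\bar\Omega),
\end{align*}
with their natural product norms. Define
$$J(v)=\bigl(v_t-F(\lambda(D^2v))+f(x),\ h(Dv)\big|_{\partial\Omega\times[0,T]},\ v(\cdot,0)\bigr),$$
and take as base point
$$v_0(x,t)=u_0(x)+t\bigl[F(\lambda(D^2u_0(x)))-f(x)\bigr].$$
Since $Du_0(\Omega)=\tilde\Omega$ implies $h(Du_0)=0$ on $\partial\Omega$, a direct calculation gives $J(v_0)(x,0)=(0,0,u_0(x))$, matching the target data $w:=(0,0,u_0)$ at $t=0$. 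A H\"older interpolation in the time variable therefore yields
$$\|J(v_0)-w\|_Y\le C\,T^{1-\alpha/2}\to 0\quad\text{as}\ T\to 0,$$
where $C$ depends only on $u_0,f,F$.

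The G\^ateaux derivative of $J$ at $v\in X$ is the linear oblique parabolic operator
$$DJ[v](\varphi)=\bigl(\varphi_t-F^{ij}(D^2v)\,\varphi_{ij},\ h_{p_k}(Dv)\,\varphi_k,\ \varphi(\cdot,0)\bigr).$$
Condition (\ref{e1.2}) makes $(F^{ij}(D^2v))$ positive definite for $v$ in a small $X$-neighbourhood of $v_0$, and by Lemma~\ref{l1.3} together with uniform convexity we have $\langle h_p(Dv),\nu\rangle>0$ on $\partial\Omega$ throughout $[0,T'']$ for $T''$ small enough. Lemma~\ref{l1.2} (extended in standard fashion to the inhomogeneous linear setting) then produces a unique $\varphi\in X$ solving $DJ[v](\varphi)=y$ for every $y\in Y$ with the natural compatibility at $t=0$, and the Schauder-type estimate supplies a bound $\|\varphi\|_X\le m\,\|y\|_Y$ with $m$ uniform in $v$ on a ball $\{\|v-v_0\|_X\le R\}$, because $F^{ij}(D^2v)$ and $h_{p_k}(Dv)$ vary continuously in $C^{\alpha,\alpha/2}$ as $v$ varies in $X$.

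Choose $T''\in(0,T]$ small enough that $C\,T''^{1-\alpha/2}<R/m$; Lemma~\ref{l1.11} then delivers $u\in X$ with $\|u-v_0\|_X<R$ and $J(u)=w$, i.e., $u$ solves (\ref{eeee2.1}) on $\bar\Omega_{T''}$. Since $\|u-v_0\|_X<R$ and $v_0(\cdot,0)=u_0$ is strictly convex, continuity of $D^2u$ in $t$ (together with a further shrinking of $T''$ if necessary, as in (\ref{eeee2.02})) ensures $D^2u(\cdot,t)>0$ for all $t\in[0,T'']$. The principal technical point, and the main obstacle, is securing the uniform bound $m$ for the right inverse $L[v]$ across $v$ in a neighbourhood of $v_0$: this rests on the positivity of $(F^{ij})$ supplied by (\ref{e1.2}), the strict obliqueness of $h_p\cdot\nu$ supplied by Lemma~\ref{l1.3}, and the stability of the Schauder constants in Lemma~\ref{l1.2} under small $C^{\alpha,\alpha/2}$ perturbations of the coefficients.
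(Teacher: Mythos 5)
Your overall strategy coincides with the paper's: both set up the map $J$ on $X=C^{2+\alpha,\frac{2+\alpha}{2}}(\bar\Omega_T)$, compute the G\^ateaux derivative, obtain a uniformly bounded right inverse from the linear oblique Schauder theory (Lemma~\ref{l1.2}, using Lemma~\ref{l1.3} for obliqueness), and conclude via the quantitative inverse function theorem (Lemma~\ref{l1.11}). The one genuine gap is your choice of base point. You take $v_0(x,t)=u_0(x)+t\,g(x)$ with $g=F(\lambda(D^2u_0))-f$. Since $u_0$ is only assumed to be $C^{2+\alpha_0}(\bar\Omega)$, the function $g$ is merely $C^{\alpha_0}(\bar\Omega)$, so $D^2_xv_0=D^2u_0+t\,D^2g$ does not exist for $t>0$ and $v_0\notin X$. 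Consequently none of the subsequent steps can be carried out: the first component $F(\lambda(D^2u_0))-F(\lambda(D^2v_0))$ of $J(v_0)-w$ is not defined, the claimed bound $\|J(v_0)-w\|_Y\le C\,T^{1-\alpha/2}$ has no meaning (and would in any case require $g\in C^{2+\alpha}$), and the neighbourhood $\{\|v-v_0\|_X\le R\}$ on which the right inverse must be uniformly bounded is not anchored at a point of $X$. A secondary issue is that $h(Dv_0)$ does not vanish on $\partial\Omega\times(0,T]$, so you would also need to estimate the boundary component of $J(v_0)-w$ in $C^{1+\alpha,\frac{1+\alpha}{2}}$, which again consumes derivatives of $g$ that are not available.

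The paper circumvents exactly this difficulty by taking as base point the solution $\hat u$ of the auxiliary problem (\ref{eeee2.2}), namely $\hat u_t-\Delta\hat u=F[D^2u_0]-\Delta u_0-f$ with the oblique boundary condition $h(D\hat u)=0$ and initial datum $u_0$. By Lemma~\ref{l1.2} this $\hat u$ lies in $C^{2+\alpha_0,1+\frac{\alpha_0}{2}}(\bar\Omega_{T_1})\subset X$ and is strictly convex in $x$; moreover the boundary and initial components of $J(\hat u)$ vanish identically, so only the interior component $\hat f=\hat u_t-F[D^2\hat u]+f$ must be made small, which follows from the convergence $\|D^2\hat u-D^2u_0\|_{C^{\alpha,\frac{\alpha}{2}}(\bar\Omega_{T''})}\to 0$ recorded in (\ref{eeee2.02}). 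To repair your argument you should replace your explicit affine-in-time base point with such a PDE-constructed one (or otherwise first establish that $u_0$ may be taken smooth without disturbing the constraint $Du_0(\Omega)=\tilde\Omega$, which is not immediate).
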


\begin{proof}
Denote the Banach spaces
$$X=C^{2+\alpha,1+\frac{\alpha}{2}}(\bar{\Omega}_{T}),
\quad Y=C^{\alpha,\frac{\alpha}{2}}(\bar{\Omega}_{T})
\times C^{1+\alpha,\frac{1+\alpha}{2}}(\partial\Omega\times(0,T])\times C^{2+\alpha}(\bar{\Omega}),$$
where
$$\|\cdot\|_{Y}=\|\cdot\|_{C^{\alpha,\frac{\alpha}{2}}(\bar{\Omega}_{T})}
+\|\cdot\|_{C^{1+\alpha,\frac{1+\alpha}{2}}(\partial\Omega\times(0,T])}+
\|\cdot\|_{C^{2+\alpha}(\bar{\Omega})}.$$
Define a map
$$J:\quad X\rightarrow Y$$
by
$$J(u)=\left\{ \begin{aligned}
&\frac{\partial u}{\partial t}-F[D^{2}u]+f(x), \ \ && (x,t)\in\Omega_{T}, \\
&h(Du), && (x,t)\in \partial\Omega\times(0,T],\\
&u, && (x,t)\in \Omega\times\{t=0\}.
\end{aligned} \right.$$
Thus the strategy is  to use the inverse function theorem to obtain the short time existence result.

The computation of the G\^{a}teaux derivative shows that for any $u$, $v\in X$,
$$DJ[u](v):=\frac{d}{d\tau}J(u+\tau v)|_{\tau=0}=\left\{ \begin{aligned}
&\frac{\partial v}{\partial t}-F^{ij}[D^{2}u]v_{ij},\  && (x,t)\in\Omega_{T}, \\
&h_{p_{i}}(Du)v_{i}, && (x,t)\in \partial\Omega\times(0,T],\\
&v, && (x,t)\in \Omega\times\{t=0\}.
\end{aligned} \right.$$
Using Lemma \ref{l1.2} and Lemma \ref{l1.3}, there exists $T_{1}>0$ such that  we can find $$\hat{u}\in C^{2+\alpha_{0},1+\frac{\alpha_{0}}{2}}(\bar{\Omega}_{T_{1}})\subset X$$ to be strictly convex in $x$ variable,  which satisfies the following equations
\begin{equation}\label{eeee2.2}
\left\{ \begin{aligned}\frac{\partial \hat{u}}{\partial t}-\Delta \hat{u}&=F[D^{2}u_{0}]-\Delta u_{0}-f,\ \
&& T_{1}>t>0,\ x\in \Omega, \\
h(D\hat{u})&=0,&& T_{1}>t>0,\  x\in\partial\Omega,\\
 \hat{u}&=u_{0}, && t=0,\ x\in \Omega.
\end{aligned} \right.
\end{equation}
We see that there exists $ R>0$, such that $u$ is strictly convex in $x$ variable if $$\|u-\hat{u}\|_{C^{2+\alpha,\frac{2+\alpha}{2}}(\bar{\Omega}_{T_{1}})}<R.$$

For each $Z:=(\bar f,\bar g,\bar w)\in Y$, using Lemma \ref{l1.2} again, we know that there exists a unique $v\in X\ (T=T_{1})$ satisfying $DJ[u](v)=(\bar f,\bar g,\bar w)$, that is,
\begin{equation*}
\left\{ \begin{aligned}
\frac{\partial v}{\partial t}-F^{ij}[D^{2}u]v_{ij}&=\bar f,\ \ && T_{1}>t>0,\ x\in \Omega, \\
h_{p_{i}}(Du)v_{i}&=\bar g,&& T_{1}>t>0,\ x\in\partial\Omega,\\
 v&=\bar w, &&  t=0,\ x\in \Omega.
\end{aligned} \right.
\end{equation*}
Using Schauder estimates for linear parabolic equation to oblique boundary condition(cf. Theorem 8.8 and 8.9 in \cite{GM}), we obtain for some positive constant $m$,
\begin{equation*}
\| v \|_{C^{2+\alpha,\frac{2+\alpha}{2}}(\bar{\Omega}_{T_1})}
\leq
m\left(\|\bar f \|_{C^{\alpha,\frac{\alpha}{2}}(\bar{\Omega}_{T_1})}
+\|\bar g \|_{C^{1+\alpha,\frac{1+\alpha}{2}}(\partial\Omega\times(0,T_1])}+
\|\bar w \|_{C^{2+\alpha}(\bar{\Omega})}\right).
\end{equation*}
For $T=T_1$, by the definition of the Banach spaces $X$ and $Y$, we can rewrite the above Schauder estimates as
\begin{equation*}
\| v \|_{X} \leq m \| Z \|_{Y}.
\end{equation*}
If  $\|Z\|_{Y}\leq 1$, then we have
$$\|v\|_{X}\leq m. $$
It means that the derivative $DJ[u](v)=Z$ has a right inverse $v=L[u](Z)$ and
\begin{equation*}
\|L[u]\|:=\sup_{\|Z\|_{Y}\leq 1}\|L[u](Z)\|_{X}\leq m.
\end{equation*}
If  we set
$$\hat{f}=\frac{\partial \hat{u}}{\partial t}-F[D^{2}\hat{u}]+f,\ \ w_{0}=(\hat{f}, 0,u_{0}),\ \ w=(0, 0,u_{0}),$$
then we can show that
\begin{equation*}
\begin{aligned}
\|\hat{f}-0\|_{C^{\alpha,\frac{\alpha}{2}}(\bar{\Omega}_{T_{1}})}
&=\|\Delta \hat{u}-\Delta u_{0}+F[D^{2}u_{0}]-F[D^{2}\hat{u}]\|_{C^{\alpha,\frac{\alpha}{2}}(\bar{\Omega}_{T_{1}})}\\
&\leq\|\Delta \hat{u}-\Delta u_{0}\|_{C^{\alpha,\frac{\alpha}{2}}(\bar{\Omega}_{T_{1}})}+\| F[D^{2}u_{0}]-F[D^{2}\hat{u}]\|_{C^{\alpha,\frac{\alpha}{2}}(\bar{\Omega}_{T_{1}})}\\
&\leq C\| D^{2}\hat{u}-D^{2}u_{0}\|_{C^{\alpha,\frac{\alpha}{2}}(\bar{\Omega}_{T_{1}})},
\end{aligned}
\end{equation*}
where $C$ is a constant depending only on the known data. Using (\ref{eeee2.02}), we conclude that there exists $T''>0$ $(T''\leq T_{1})$ to be small enough such that
$$\|\hat{f}-0\|_{C^{\alpha,\frac{\alpha}{2}}(\bar{\Omega}_{T''})}\leq  C\| D^{2}\hat{u}-D^{2}u_{0}\|_{C^{\alpha,\frac{\alpha}{2}}(\bar{\Omega}_{T''})}<\frac{R}{m}.$$
Therefore,
$$\| w-w_{0}\|_{Y}=
\|0-\hat{f}\|_{C^{\alpha,\frac{\alpha}{2}}(\bar{\Omega}_{T''})}<\frac{R}{m}.
$$
By Lemma \ref{l1.11}, we obtain the desired result.
\end{proof}
\begin{rem}
By the strong maximum principle, the strictly convex solution to (\ref{eeee2.1}) is unique.
\end{rem}

\section{The strict obliqueness estimate and the $C^2$ estimate}
In this section, the $C^{2}$ a priori bound is accomplished by making the second derivative estimates on the boundary for the solutions of fully nonlinear parabolic equations. We also refer to the recent preprint \cite{WHB} for a proof of separation in elliptic setting with the same criterion as the one used in the present work. This treatment is similar to the problems presented in \cite{HR}, \cite{JU} and \cite{OK}, but requires some modification to accommodate the more general situation. Specifically, the structure conditions (\ref{e1.3}) and (\ref{e1.4}) are needed in order to derive differential inequalities from barriers which can be used.

For the convenience, we denote $\beta=(\beta^{1}, \cdots, \beta^{n})$ with $\beta^{i}:=h_{p_{i}}(Du)$, and $\nu=(\nu_{1},\cdots,\nu_{n})$ as the unit inward normal vector at $x\in\partial\Omega$. The expression of the inner product is
\begin{equation*}
\langle\beta, \nu\rangle=\beta^{i}\nu_{i}.
\end{equation*}
By Proposition \ref{pppp1.1} and the regularity theory of parabolic equations, we may assume that $u$ is a strictly convex solution of (\ref{eeee1.1})-(\ref{eeee1.3}) in the class $C^{2+\alpha,1+\frac{\alpha}{2}}(\bar{\Omega}_{T})\cap C^{4+\alpha,2+\frac{\alpha}{2}}(\Omega_{T})$ for some $T>0$.
\begin{lemma}\label{11115.1a}($\dot{u}$-estimates) If the convex solution to (\ref{eeee1.1})-(\ref{eeee1.3}) exists and $f\in \mathscr{A}_\delta$, then
\begin{equation*}
\min_{\bar{\Omega}}F[D^{2}u_{0}]-\max_{\bar{\Omega}}f(x)\leq\dot{u}\leq\max_{\bar{\Omega}}F[D^{2}u_{0}]-\min_{\bar{\Omega}}f(x),
\end{equation*}
where $\dot{u}:=\frac{\partial u}{\partial t}$.
\end{lemma}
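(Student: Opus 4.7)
My plan is to reduce the estimate to a maximum principle for $w:=\dot u=u_t$, viewed as the solution of a linear parabolic problem. Since $u\in C^{2+\alpha,1+\alpha/2}(\bar\Omega_T)\cap C^{4+\alpha,2+\alpha/2}(\Omega_T)$, I can differentiate (\ref{eeee1.1}) in $t$ to obtain
\begin{equation*}
w_t-F^{ij}(D^2 u)\,w_{ij}=0\qquad\text{in }\Omega_T,
\end{equation*}
and differentiate the boundary condition $h(Du)=0$ in $t$ to obtain the homogeneous oblique condition
\begin{equation*}
\beta^i w_i=h_{p_i}(Du)\,w_i=0\qquad\text{on }\partial\Omega\times(0,T],
\end{equation*}
with initial datum $w(\cdot,0)=F[D^2 u_0]-f$. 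Condition (\ref{e1.2}) makes the coefficient matrix $(F^{ij})$ positive definite along the flow, so the operator is parabolic.

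Next I would verify strict obliqueness $\langle\beta,\nu\rangle>0$ on $\partial\Omega\times(0,T]$: Lemma \ref{l1.3} gives the nonstrict inequality from strict convexity of $u$, and the strict version is the standard Urbas-type argument using that $Du(\cdot,t)$ maps the uniformly convex $\Omega$ diffeomorphically onto the uniformly convex $\tilde\Omega$. With strict obliqueness in hand, the maximum principle applies to $w$: an interior extremum forces $w$ to be constant (strong maximum principle), while an extremum attained at some $(x_0,t_0)\in\partial\Omega\times(0,T]$ is excluded by Hopf's lemma, since at such a maximum the inward normal derivative satisfies $\partial_\nu w(x_0,t_0)<0$, whence $\beta^i w_i(x_0,t_0)<0$—contradicting the boundary condition. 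A symmetric argument handles the minimum. Consequently
\begin{equation*}
\min_{\bar\Omega}\bigl(F[D^2 u_0]-f\bigr)\le w(x,t)\le \max_{\bar\Omega}\bigl(F[D^2 u_0]-f\bigr),
\end{equation*}
and the stated bounds follow at once from the elementary inequalities $\min(A-B)\ge\min A-\max B$ and $\max(A-B)\le\max A-\min B$.

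The main obstacle is the strict obliqueness step: without $\langle\beta,\nu\rangle>0$ a boundary extremum cannot be excluded by Hopf's lemma. Establishing strict obliqueness is precisely the theme of the forthcoming section, and once it is in hand this $\dot u$-estimate is an immediate consequence of linear parabolic theory. Everything else—the differentiation in $t$, the form of the linearized equation, and the passage from $\min(F[D^2 u_0]-f)$ to the asserted bounds—is routine.
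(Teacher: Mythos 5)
Your proposal is correct and follows essentially the same route as the paper: differentiate the equation and the boundary condition in $t$, so that $\dot u$ solves a homogeneous linear parabolic equation with the homogeneous oblique condition $h_{p_k}(Du)\dot u_k=0$, and then combine the maximum principle with Hopf's lemma to push the extrema of $\dot u$ to the initial slice $t=0$. Note that for this step only the pointwise strict inequality $\langle\beta,\nu\rangle>0$ is needed, which already follows from the strict convexity of $u(\cdot,t)$ via the identity $\langle\beta,\nu\rangle=\sqrt{u^{ij}\nu_i\nu_j\,h_{p_k}h_{p_l}u_{kl}}$, so there is no circularity with the later uniform obliqueness estimate.
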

\begin{proof} From (\ref{eeee1.1}), a direct computation shows that
$$\frac{\partial(\dot{u}) }{\partial t}-F^{ij}\partial_{ij}(\dot{u})=0.$$
Using the maximum principle, we see that
$$\min_{\bar{\Omega}_{T}}(\dot{u}) =\min_{\partial\bar{\Omega}_{T}}(\dot{u}).$$
Without loss of generality, we assume that $\dot{u}\neq constant$. If there exists $x_{0}\in \partial\Omega$, $t_{0}>0$, such that $\dot{u}(x_{0},t_{0})=\min_{\bar{\Omega}_{T}}(\dot{u})$. On the one hand, since $\langle\beta, \nu\rangle>0$, by the Hopf Lemma (cf.\cite{LL}) for parabolic equations, there must hold in the following
$$\dot{u}_{\beta}(x_{0},t_{0})\neq 0.$$
On the other hand,  we differentiate the boundary condition and then obtain
$$\dot{u}_{\beta}=h_{p_{k}}(Du)\frac{\partial \dot{u}}{\partial x_{k}}=\frac{\partial h(Du)}{\partial t}=0.$$
It is a contradiction. So we deduce that
$$\dot{u}\geq \min_{\bar{\Omega}_{T}}(\dot{u})
=\min_{\partial\bar{\Omega}_{T}|_{t=0}}(\dot{u})=\min_{\bar{\Omega}}\left(F[D^{2}u_{0}]-f(x)\right)\geq \min_{\bar{\Omega}}F[D^{2}u_{0}]-\max_{\bar{\Omega}}f(x).$$
For the same reason, we have
$$\dot{u}\leq \max_{\bar{\Omega}_{T}}(\dot{u})
=\max_{\partial\bar{\Omega}_{T}|_{t=0}}(\dot{u})=\max_{\bar{\Omega}}\left(F[D^{2}u_{0}]-f(x)\right)\leq \max_{\bar{\Omega}}F[D^{2}u_{0}]-\min_{\bar{\Omega}}f(x).$$
Putting these facts together, the assertion follows.
\end{proof}

\begin{lemma}\label{llll5.1b}
Let $(x,t)$ be an arbitrary point of $\Omega_{T}$, and $\lambda_{1}(x,t)$, $\cdots$, $\lambda_{n}(x,t)$ be the eigenvalues of $D^{2}u$ at $(x,t)$. Suppose that (\ref{e1.2.0}) and (\ref{e1.2}) hold, if $\mathop{\operatorname{osc}}_{\bar{\Omega}}(f) \leq\delta$ and $u$ is a strictly convex solution to (\ref{eeee1.1})-(\ref{eeee1.3}), then there exists $\mu>0$ and $\omega>0$ depending only on $F[D^{2}u_{0}]$ and $\delta$ such that
\begin{equation*}
\min_{1\leq i\leq n}\lambda_i(x,t)\leq\mu,\ \  \max_{1\leq i\leq n}\lambda_i(x,t)\geq\omega.
\end{equation*}
\end{lemma}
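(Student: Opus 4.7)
The plan is to derive pointwise two-sided bounds on $F(\lambda(D^2u))(x,t)$ directly from the equation and the $\dot{u}$-estimate in Lemma \ref{11115.1a}, and then use the monotonicity of $F$ together with the strict separation of these bounds from $F(0,\dots,0)$ and $F(+\infty,\dots,+\infty)$ (guaranteed by the choice of $\delta$) to exclude the possibility that all eigenvalues are uniformly large or uniformly small.

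First I would rewrite (\ref{eeee1.1}) as $F(\lambda(D^2u)) = \dot u + f(x)$ and apply Lemma \ref{11115.1a} together with the hypothesis $\mathop{\operatorname{osc}}_{\bar\Omega}(f)\leq\delta$. For any $(x,t)\in\Omega_T$,
\begin{equation*}
\min_{\bar\Omega} F[D^2u_0] - \delta \;\leq\; F(\lambda(D^2u))(x,t) \;\leq\; \max_{\bar\Omega} F[D^2u_0] + \delta,
\end{equation*}
since $f(x)-\max_{\bar\Omega}f\geq -\delta$ and $f(x)-\min_{\bar\Omega}f\leq\delta$. By the choice of $\delta$ in the definition of $\mathscr{A}_\delta$,
\begin{equation*}
F(0,\dots,0) \;<\; \min_{\bar\Omega} F[D^2u_0] - \delta, \qquad \max_{\bar\Omega} F[D^2u_0] + \delta \;<\; F(+\infty,\dots,+\infty),
\end{equation*}
so $F(\lambda(D^2u))(x,t)$ stays in a compact subinterval of the open interval $(F(0,\dots,0),F(+\infty,\dots,+\infty))$.

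Next I would exploit the monotonicity condition (\ref{e1.2}). For the upper bound on $\min_i\lambda_i$, observe that if $\min_i\lambda_i(x,t)>\mu$ for some $\mu>0$, then by monotonicity $F(\lambda(D^2u))(x,t) \geq F(\mu,\mu,\dots,\mu)$. Since $F(\mu,\dots,\mu)\to F(+\infty,\dots,+\infty)$ as $\mu\to+\infty$ by (\ref{e1.2.0}) and monotonicity, and since $\max_{\bar\Omega} F[D^2u_0]+\delta$ is strictly less than this limit, there is a (largest) $\mu=\mu(F[D^2u_0],\delta)>0$ such that $F(\mu,\dots,\mu)>\max_{\bar\Omega}F[D^2u_0]+\delta$; the assumption $\min_i\lambda_i(x,t)>\mu$ then contradicts the pointwise upper bound on $F(\lambda(D^2u))$, giving $\min_i\lambda_i(x,t)\leq\mu$. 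Symmetrically, if $\max_i\lambda_i(x,t)<\omega$, then $F(\lambda(D^2u))(x,t)\leq F(\omega,\dots,\omega)$, and since $F(\omega,\dots,\omega)\to F(0,\dots,0)$ as $\omega\to 0^+$ which is strictly below $\min_{\bar\Omega}F[D^2u_0]-\delta$, a choice of small enough $\omega=\omega(F[D^2u_0],\delta)>0$ forces the opposite direction of the pointwise lower bound, yielding $\max_i\lambda_i(x,t)\geq\omega$.

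Essentially no real obstacle: the argument is a soft compactness/monotonicity argument and uses only (\ref{e1.2.0}) and (\ref{e1.2}), exactly as stated in the lemma's hypotheses. The only subtle point is to take care that the limits $F(\mu,\dots,\mu)\to F(+\infty,\dots,+\infty)$ and $F(\omega,\dots,\omega)\to F(0,\dots,0)$ as $\mu\to+\infty$ (resp. $\omega\to 0^+$) follow from coordinate-wise monotonicity together with the finiteness in (\ref{e1.2.0}); this is where strict monotonicity is actually used. The constants $\mu,\omega$ depend only on $F$, $F[D^2u_0]$ and $\delta$, as claimed.
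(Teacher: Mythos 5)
Your argument is correct and is essentially the paper's own proof: both combine the $\dot u$-estimate with $\operatorname{osc}_{\bar\Omega}(f)\leq\delta$ and the definition of $\delta$ to trap $F(\lambda(D^2u))$ strictly between $F(0,\dots,0)$ and $F(+\infty,\dots,+\infty)$, then use the monotonicity (\ref{e1.2}) via the comparisons $F(\min_i\lambda_i,\dots,\min_i\lambda_i)\leq F(\lambda)\leq F(\max_i\lambda_i,\dots,\max_i\lambda_i)$ to bound $\min_i\lambda_i$ above and $\max_i\lambda_i$ below. The only difference is presentational: you phrase the last step as a contrapositive with explicit limits $F(\mu,\dots,\mu)\to F(+\infty,\dots,+\infty)$, while the paper states the same conclusion directly.
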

\begin{proof}
By condition (\ref{e1.2}) and Lemma \ref{11115.1a}, we obtain
\begin{equation*}
\begin{aligned}
F\left(\min_{1\leq i\leq n}\lambda_i(x,t),\cdots, \min_{1\leq i\leq n}\lambda_i(x,t)\right)&\leq F[D^2u]=\dot{u}+f(x)\\
&\leq\max_{\bar{\Omega}}F[D^{2}u_{0}]+f(x)-\min_{\bar{\Omega}}f(x)\\
&\leq \max_{\bar{\Omega}}F[D^{2}u_{0}]+\mathop{\operatorname{osc}}_{\bar{\Omega}}(f)\\
&\leq \max_{\bar{\Omega}}F[D^{2}u_{0}]+\delta\\
&< F\left(+\infty,\cdots,+\infty\right),
\end{aligned}
\end{equation*}
and
\begin{equation*}
\begin{aligned}
F\left(\max_{1\leq i\leq n}\lambda_i(x,t),\cdots, \max_{1\leq i\leq n}\lambda_i(x,t)\right)&\geq F[D^2u]=\dot{u}+f(x)\\
&\geq\min_{\bar{\Omega}}F[D^{2}u_{0}]+f(x)-\max_{\bar{\Omega}}f(x)\\
&\geq \min_{\bar{\Omega}}F[D^{2}u_{0}]-\mathop{\operatorname{osc}}_{\bar{\Omega}}(f)\\
&\geq \min_{\bar{\Omega}}F[D^{2}u_{0}]-\delta\\
&> F(0,\cdots,0).
\end{aligned}
\end{equation*}
By the monotonicity of $F$ and condition (\ref{e1.2.0}), we get the desired result.
\end{proof}

By Lemma \ref{llll5.1b}, the points $(\lambda_{1},\lambda_{2},\cdots, \lambda_{n})$ are always in $ \Gamma^{+}_{]\mu,\omega[}$ under the flow. So we can obtain
\begin{lemma}\label{llll5.1c}
Let $(x,t)$ be an arbitrary point of $\Omega_{T}$, and $\lambda_{1}(x,t)$, $\cdots$, $\lambda_{n}(x,t)$ be the eigenvalues of $D^{2}u$ at $(x,t)$. Suppose that (\ref{e1.2.0}) and (\ref{e1.2}) hold, if $\mathop{\operatorname{osc}}_{\bar{\Omega}}(f) \leq\delta$ and $u$ is a strictly convex solution to (\ref{eeee1.1})-(\ref{eeee1.3}), then there exists $\Lambda_1>0$ and $\Lambda_2>0$ depending only on $F[D^{2}u_{0}]$ and $\delta$ such that $F$ satisfies the structure conditions (\ref{e1.3}) and (\ref{e1.4}).
\end{lemma}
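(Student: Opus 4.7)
The claim is essentially a direct transcription of Lemma \ref{llll5.1b} into the language of the structure conditions (\ref{e1.3})--(\ref{e1.4}), so my plan is very short. I will simply combine the pinching on the eigenvalues with the standing assumption that $F$ satisfies the structure conditions on every set $\Gamma^+_{]s_1,s_2[}$.

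\textbf{Step 1: locate the spectrum.} Lemma \ref{llll5.1b} supplies constants $\mu>0$ and $\omega>0$, depending only on $F[D^2u_0]$ and $\delta$, with the property that at every point $(x,t)\in\Omega_T$ the eigenvalues of $D^2u$ obey
\[
\min_{1\leq i\leq n}\lambda_i(x,t)\leq\mu,\qquad \max_{1\leq i\leq n}\lambda_i(x,t)\geq\omega.
\]
By the very definition of the class $\Gamma^+_{]s_1,s_2[}$ (applied with $s_1=\mu$ and $s_2=\omega$), this is precisely the statement that
\[
\bigl(\lambda_1(x,t),\dots,\lambda_n(x,t)\bigr)\in \Gamma^{+}_{]\mu,\omega[}
\quad\text{for all } (x,t)\in\Omega_T.
\]

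\textbf{Step 2: specialise the a priori structure conditions.} In the hypotheses of Theorem \ref{tttt1.1} the operator $F$ is assumed to satisfy (\ref{e1.3}) and (\ref{e1.4}) on every set $\Gamma^+_{]s_1,s_2[}$, with constants $\Lambda_1,\Lambda_2$ that depend on $s_1$ and $s_2$. Apply this assumption with the choice $s_1=\mu$, $s_2=\omega$ given by Step~1. This yields constants $\Lambda_1,\Lambda_2>0$ that depend only on $\mu$ and $\omega$, and therefore only on $F[D^2u_0]$ and $\delta$, such that
\[
\Lambda_1\leq\sum_{i=1}^{n}\frac{\partial F}{\partial\lambda_i}\bigl(\lambda(D^2u(x,t))\bigr)\leq\Lambda_2,
\qquad
\Lambda_1\leq\sum_{i=1}^{n}\frac{\partial F}{\partial\lambda_i}\bigl(\lambda(D^2u(x,t))\bigr)\lambda_i^2(x,t)\leq\Lambda_2,
\]
at every $(x,t)\in\Omega_T$, which is the desired conclusion.

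\textbf{Remark on the main obstacle.} There is really no obstacle once Lemma \ref{llll5.1b} has been proved: the substance of this statement is the uniform pinching of the spectrum to a set $\Gamma^+_{]\mu,\omega[}$ whose parameters $\mu,\omega$ are independent of the solution $u$ and of the time horizon $T$. The only point worth highlighting is that the constants depend on $F[D^2u_0]$ only through $\min_{\bar\Omega} F[D^2u_0]$ and $\max_{\bar\Omega} F[D^2u_0]$, which, together with the strict inequalities in (\ref{e1.2.0}) and the bound $\operatorname{osc}_{\bar\Omega} f\leq\delta$, are precisely what Lemma \ref{llll5.1b} used to keep the values of $F[D^2u]$ strictly away from $F(0,\dots,0)$ and $F(+\infty,\dots,+\infty)$; this is what prevents either $\mu$ from collapsing to $0$ or $\omega$ from blowing up to $+\infty$.
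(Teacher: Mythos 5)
Your proposal is correct and is exactly the paper's argument: the paper derives this lemma as an immediate consequence of Lemma \ref{llll5.1b}, observing that the eigenvalues always lie in $\Gamma^{+}_{]\mu,\omega[}$ along the flow and then invoking the standing assumption that (\ref{e1.3})--(\ref{e1.4}) hold on every $\Gamma^{+}_{]s_1,s_2[}$ with constants depending only on $s_1,s_2$, specialised to $s_1=\mu$, $s_2=\omega$.
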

In the following, we always assume that $\Lambda_1>0$ and $\Lambda_2>0$ are universal constants depending on the known data.

For technical needs below, we introduce the Legendre transformation of $u$. For any $x\in \mathbb{R}^n$, define
\begin{equation*}
\tilde{x}_{i}:=\frac{\partial u}{\partial x_{i}}(x),\ \ i=1,2,\cdots,n,
\end{equation*}
and
$$\tilde u(\tilde{x}_{1},\cdots,\tilde{x}_{n},t):=\sum_{i=1}^{n}x_{i}\frac{\partial u}{\partial x_{i}}(x,t)-u(x,t).$$
In terms of $\tilde{x}_{1}$, $\cdots$, $\tilde{x}_{n}$ and $\tilde u(\tilde{x}_{1},\cdots,\tilde{x}_{n},t)$, we can easily check that
$$\left(\frac{\partial^{2} \tilde{u}}{\partial \tilde{x}_{i}\partial
\tilde{x}_{j}}\right)=\left(\frac{\partial^{2} u}{\partial x_{i}
\partial x_{j}}\right)^{-1}.$$
Let $\mu_{1}$, $\cdots$, $\mu_{n}$ be the eigenvalues of $D^{2}\tilde{u}$ at $\tilde{x}=D u(x)$. We denote
$$\mu_{i}=\lambda^{-1}_{i},\ \ i=1,2,\cdots,n.$$
Then
$$\frac{\partial \tilde F}{\partial \mu_{i}}=\lambda^{2}_{i}\frac{\partial F}{\partial \lambda_{i}},\quad \mu^{2}_{i}\frac{\partial \tilde F}{\partial \mu_{i}}=\frac{\partial F}{\partial \lambda_{i}}.$$
Moreover, it follows from (\ref{eeee2.1}) that
\begin{equation}\label{eeee2.1.1001}
\left\{ \begin{aligned}
\frac{\partial \tilde{u}}{\partial t}&=\tilde{F}(D^{2}\tilde{u})+f(D\tilde{u}),\ \ && t>0,\ \tilde x\in \tilde{\Omega}, \\
\tilde{h}(D\tilde{u})&=0,&& t>0,\ \tilde x\in\partial\tilde{\Omega},\\
 \tilde{u}&=\tilde{u}_{0}, &&  t=0,\ \tilde x\in \tilde{\Omega},
\end{aligned} \right.
\end{equation}
where $\tilde h$ is the defining function of $\Omega$, and $\tilde{u}_{0}$ is the Legendre transformation of $u_{0}$.

\begin{rem}\label{r2.2}
By Lemma \ref{llll5.1b}, if $u$ is a strictly convex solution to (\ref{eeee1.1})-(\ref{eeee1.3}), then the eigenvalues of $D^{2}u$ and $D^{2}\tilde{u}$ must be in $\Gamma^{+}_{]\mu,\omega[}$ and $\Gamma^{+}_{]\omega^{-1},\mu^{-1}[}$ respectively. Therefore, $\tilde F$ also satisfies the structure conditions (\ref{e1.3}) and (\ref{e1.4}).
\end{rem}

In order to establish the $C^{2}$ estimates,  we make use of the method to do the strict obliqueness estimates, a parabolic version of a result of J.Urbas \cite{JU} which was given in \cite{OK}. Returning to Lemma \ref{l1.3}, we get a uniform positive lower bound of the quantity $\inf_{\partial\Omega}h_{p_{k}}(Du)\nu_{k}$ which does not depend on $t$ under the structure conditions of $F$.

\begin{lemma}\label{llll3.4}
Let $F$ satisfy the structure conditions (\ref{e1.2.0})-(\ref{e1.4}) and $f\in {\mathscr{A}}_\delta$. If $u$ is a strictly convex solution to (\ref{eeee1.1})-(\ref{eeee1.3}) and $|D f|$ is sufficiently small, then the strict obliqueness estimate
\begin{equation}\label{eeee3.4}
\langle\beta, \nu\rangle\geq \frac{1}{C_1}>0
\end{equation}
holds on $\partial \Omega$ for some universal constant $C_1$, which depends only on $F$, $u_0$, $\Omega,$ $\tilde{\Omega}$ and $\delta$, and is independent of $t$.
\end{lemma}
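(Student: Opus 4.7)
The plan is to adapt the strict obliqueness estimate of Urbas \cite{JU}, developed in the parabolic Monge--Amp\`ere setting by Schn\"urer--Smoczyk \cite{OK} and the second author \cite{HR}, to the present class of operators satisfying (\ref{e1.2.0})--(\ref{e1.4}) with a non-trivial source $f(x)$. Two structural inputs are decisive: the Legendre duality (\ref{eeee2.1.1001}) and the two-sided trace bounds (\ref{e1.3})--(\ref{e1.4}), which hold along the flow by Lemma \ref{llll5.1c}. First I would observe that since $|D\tilde h|=1$ on $\partial\Omega$ and $|Dh|=1$ on $\partial\tilde\Omega$, the inner unit normals satisfy $\nu = D\tilde h(x)$ and $\tilde\nu = Dh(\tilde x)$, so on $\partial\Omega$
$$\langle \beta,\nu\rangle \;=\; h_{p_k}(Du)\,\tilde h_{x_k}(x) \;=:\; w(x,t),$$
where $w$ extends smoothly to $\bar\Omega\times[0,T]$; moreover the same scalar equals $\langle\tilde\beta,\tilde\nu\rangle$ at the Legendre-dual point, which later allows one to treat by duality any term not directly controllable in the primal picture.

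Next I would argue by a boundary minimum principle on $w\big|_{\partial\Omega\times[0,T]}$. If the minimum is attained at $t=0$, Lemma \ref{l1.3} applied to $u_0$ yields the bound. Otherwise let $(x_0,t_0)$ be a boundary minimum with $t_0>0$; rotate so that $\nu(x_0)=e_n$ and pick a tangential frame $\{e_\alpha\}_{\alpha<n}$. Differentiating $h(Du)=0$ tangentially gives $h_{p_k}u_{k\alpha}=0$, so $D^2u\cdot\beta$ is proportional to $\nu$; differentiating once more expresses $w_{\alpha\alpha}$ as $h_{p_jp_k}u_{j\alpha}u_{k\alpha}+\tilde h_{x_jx_k}h_{p_j}h_{p_k}+h_{p_k}u_{k\alpha\alpha}+(\text{lower order})$, with the first two brackets bounded above by $-\theta|u_{\cdot\alpha}|^2$ and $-\tilde\theta|\beta|^2$ thanks to the uniform convexity of $\tilde\Omega$ and $\Omega$. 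Contracting the tangential maximum condition $\sum_{\alpha<n}F^{\alpha\alpha}w_{\alpha\alpha}(x_0,t_0)\ge 0$ with the twice tangentially differentiated flow
$$F^{ij}u_{ij\alpha\alpha} \;=\; u_{t\alpha\alpha} + f_{\alpha\alpha} - F^{ij,rs}u_{ij\alpha}u_{rs\alpha}$$
eliminates the third-derivative term, the concavity (\ref{e1.2.1}) makes the $F^{ij,rs}$ contribution sign-favourable, Lemma \ref{11115.1a} bounds $u_t$, and the $f_{\alpha\alpha}$ part is absorbed by $|Df|$. Replacing $\sum F^{ii}$ and $\sum F^{ii}\lambda_i^2$ by the universal constants $\Lambda_1,\Lambda_2$ from (\ref{e1.3})--(\ref{e1.4}), and extracting a negative multiple of $w(x_0,t_0)^{-1}$ from the $\theta$, $\tilde\theta$ terms, one arrives at an inequality of the form $C_0/w(x_0,t_0)\le C_1+C_2|Df|$, giving $w(x_0,t_0)\ge 1/C_1$ once $|Df|$ is sufficiently small.

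The principal obstacle is the algebraic bookkeeping of the cross-terms produced by differentiating the factors $h\circ Du$ and $\tilde h$ simultaneously, and ensuring that the Legendre-dual structure condition (\ref{e1.2.2})---which, as the remark just after its statement records, is \emph{not} implied by (\ref{e1.2.1})---is invoked symmetrically with (\ref{e1.2.1}) so that the estimate closes both in the primal and in the dual picture via Remark \ref{r2.2}; this is precisely why both hypotheses are imposed in the structure conditions. A secondary subtlety is that the minimum is taken over the parabolic boundary $\partial\Omega\times[0,T]$ rather than the full space-time boundary, so no sign information on $w_t$ at $(x_0,t_0)$ is available and the estimate must be closed using only tangential second derivatives---which is exactly where the smallness hypothesis on $|Df|$ is needed to absorb the source contribution $f_{\alpha\alpha}$ without a compensating time-derivative term.
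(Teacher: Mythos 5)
Your proposal assembles the right ingredients (the Urbas/Schn\"urer--Smoczyk framework, Legendre duality, the trace bounds (\ref{e1.3})--(\ref{e1.4}) via Lemma \ref{llll5.1c}, and the role of small $|Df|$ in the dual problem), but the central mechanism you describe does not close. You propose to work entirely at the boundary minimum point, contracting the tangential minimum condition $\sum_{\alpha<n}F^{\alpha\alpha}w_{\alpha\alpha}\ge 0$ against the twice tangentially differentiated equation in order to ``eliminate the third-derivative term.'' This cancellation is not available: the third-derivative term in $w_{\alpha\alpha}$ is $h_{p_kp_l}\tilde h_{x_k}u_{l\alpha\alpha}$, so after contraction it appears as $\sum_\alpha F^{\alpha\alpha}h_{p_kp_l}\tilde h_{x_k}u_{l\alpha\alpha}$, whereas the differentiated equation produces $F^{ij}u_{ij\alpha\alpha}$; the index structures are incompatible and neither expression controls the other. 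Likewise, the claimed final inequality $C_0/w(x_0,t_0)\le C_1+C_2|Df|$ does not follow from the steps you list --- the uniform convexity of $h$ and $\tilde h$ yields terms like $-\theta|u_{\cdot\alpha}|^2$, which involve $|D^2u|$ (not yet estimated at this stage of the paper) and produce no factor of $w^{-1}$.

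The paper's proof avoids differentiating the equation twice altogether. It sets $v=\langle\beta,\nu\rangle+h(Du)$, and at the minimum point $(x_0,t_0)$ proves the \emph{normal} derivative bound $v_n(x_0,t_0)\ge -C$ by an interior barrier: $\Phi=v-v(x_0,t_0)+C_0\tilde h+A|x-x_0|^2$ satisfies $L\Phi\le 0$ on $\Omega_\rho\times[0,T]$ because $Lv\le C\sum_iF^{ii}$ (here the third derivatives are handled by the once-differentiated equation $Lu_l=f_l$ together with (\ref{e1.3})--(\ref{e1.4})) and $L\tilde h\le-\tilde\theta\sum_iF^{ii}$. Combining $v_n\ge -C$ with the vanishing tangential derivatives $v_r=0$ and the concavity of $\nu$'s extension yields either a direct lower bound on $h_{p_n}$ or a positive lower bound on $h_{p_k}h_{p_l}u_{kl}$; the identical argument for the Legendre-dual flow (where the first-order term $f_{p_i}\partial_i$ forces the smallness of $|Df|$) bounds $\tilde h_{p_k}\tilde h_{p_l}\tilde u_{kl}=u^{ij}\nu_i\nu_j$ from below; and the two bounds are combined through the Urbas identity $\langle\beta,\nu\rangle=\sqrt{u^{ij}\nu_i\nu_j\,h_{p_k}h_{p_l}u_{kl}}$, which is the indispensable closing step missing from your sketch. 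To repair your proof you would need to replace the pointwise tangential argument by this barrier-plus-normal-derivative scheme and invoke the product identity explicitly.
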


\begin{proof}
The proof  follows the similar computations carried out in \cite{WHB}.

Define
$$v=\langle\beta, \nu\rangle+h(Du).$$
Let $(x_{0},t_{0})\in \partial\Omega\times[0,T]$ such that
$$\langle\beta, \nu\rangle(x_{0},t_{0})=h_{p_{k}}(Du(x_{0},t_{0}))\nu_{k}(x_{0},t_{0})=\min_{\partial\Omega\times[0,T]}\langle\beta, \nu\rangle.$$
By rotation, we may assume that $t_{0}>0$ and $\nu(x_{0},t_0)=(0,0,\cdots,1)=:e_{n}$. By the above assumptions and the boundary condition, we obtain
$$v(x_{0},t_{0})=\min_{\partial\Omega\times[0,T]}v=\min_{\partial\Omega\times[0,T]}\langle\beta, \nu\rangle=h_{p_{n}}(Du(x_{0},t_{0})).$$
By the convexity of $\Omega$ and its smoothness, we extend $\nu$ smoothly to a tubular neighborhood of $\partial\Omega$ such that in the matrix sense
\begin{equation}\label{eeee3.5}
\left(\nu_{kl}\right):=\left(D_k\nu_l\right)\leq -\frac{1}{C}\operatorname{diag} (1,\cdots, 1,0),
\end{equation}
where $C$ is a positive constant. By Lemma \ref{l1.3}, we see that $h_{p_{n}}(Du(x_{0},t_0))\geq0$.

At $(x_{0},t_{0})$ we have
\begin{equation}\label{eeee3.6}
0=v_{r}=h_{p_{n}p_{k}}u_{kr}+ h_{p_{k}}\nu_{kr}+h_{p_{k}}u_{kr},\quad 1\leq r\leq n-1.
\end{equation}
At this point we point out a key  estimate
\begin{equation}\label{eeee3.8}
v_{n}(x_{0},t_{0})\geq -C
\end{equation}
 which will be proved later, where $C$ is a constant depending only on $\Omega$, $u_{0}$, $h$, $\tilde{h}$ and $\delta$.

It's not hard to check that (\ref{eeee3.8}) can be rewritten as
\begin{equation}\label{eeee3.9}
h_{p_{n}p_{k}}u_{kn}+ h_{p_{k}}\nu_{kn}+h_{p_{k}}u_{kn}\geq -C.
\end{equation}
Multiplying (\ref{eeee3.9}) with $h_{p_{n}}$ and (\ref{eeee3.6}) with $h_{p_{r}}$ respectively, and summing up together, we obtain
\begin{equation}\label{eeee3.10}
h_{p_{k}}h_{p_{l}}u_{kl}\geq -Ch_{p_{n}}-h_{p_{k}}h_{p_{l}}\nu_{kl}-h_{p_{k}}h_{p_{n}p_{l}}u_{kl}.
\end{equation}
Using (\ref{eeee3.5}), and
$$ 1\leq r\leq n-1,\quad h_{p_k}u_{kr}=\frac{\partial h(Du)}{\partial x_r}=0,\quad h_{p_k}u_{kn}=\frac{\partial h(Du)}{\partial x_n}\geq 0,\quad -h_{p_np_n}\geq 0,$$
we have
$$h_{p_k}h_{p_l}u_{kl}\geq-Ch_{p_n}+\frac{1}{C}|Dh|^2-\frac{1}{C}h^2_{p_n}=-Ch_{p_n}+\frac{1}{C}-\frac{1}{C}h^2_{p_n}.$$
For the last term of the above inequality, we distinguish two cases at $(x_{0},t_{0})$.

Case (i). If
$$-Ch_{p_n}+\frac{1}{C}-\frac{1}{C}h^2_{p_n}\leq \frac{1}{2C},$$
then
$$h_{p_k}(Du)\nu_{k}=h_{p_n }\geq \sqrt{\frac{1}{2}+\frac{C^4}{4}}-\frac{C^2}{2}.$$
It shows that there is a uniform positive lower bound for the quantity $\min_{\partial\Omega\times[0,T]}h_{p_{k}}(Du)\nu_{k}$.

Case (ii). If
$$-Ch_{p_n}+\frac{1}{C}-\frac{1}{C}h^2_{p_n}> \frac{1}{2C},$$
then we obtain a positive lower bound of $h_{p_k}h_{p_l}u_{kl}$.

Let $\tilde{u}$ be the Legendre transformation of $u$, then $\tilde{u}$ satisfies
\begin{equation}\label{eeee3.12}
\left\{ \begin{aligned}
\frac{\partial \tilde{u}}{\partial t}&=\tilde{F}(D^{2}\tilde{u})+f(D\tilde{u}),\ \ && T>t>0,\ \tilde{x}\in \tilde{\Omega}, \\
\tilde{h}(D\tilde{u})&=0,&& T>t>0,\ \tilde{x}\in\partial\tilde{\Omega},\\
 \tilde{u}&=\tilde{u}_{0}, &&  t=0,\ \tilde{x}\in \tilde{\Omega},
\end{aligned} \right.
\end{equation}
where $\tilde h$ is the defining function of $\Omega$, and $\tilde{u}_{0}$ is the Legendre transformation of $u_{0}$. The unit inward normal vector of $\partial\Omega$ can be expressed by $\nu=D\tilde{h}$. For the same reason, $\tilde{\nu}=Dh$, where $\tilde{\nu}=(\tilde{\nu}_{1}, \tilde{\nu}_{2},\cdots,\tilde{\nu}_{n})$ is the unit inward normal vector of $\partial\tilde{\Omega}$.

Let $\tilde{\beta}=(\tilde{\beta}^{1}, \cdots, \tilde{\beta}^{n})$ with $\tilde{\beta}^{k}:=\tilde{h}_{p_{k}}(D\tilde{u})$.
We note that one can also define
$$\tilde{v}=\langle\tilde{\beta}, \tilde{\nu}\rangle+\tilde{h}(D\tilde{u}),$$
in which
$$\langle\tilde{\beta}, \tilde{\nu}\rangle=\langle\beta, \nu\rangle.$$
Denote $\tilde{x}_{0}=Du(x_{0})$. Then we obtain
$$\tilde{v}(\tilde{x}_{0},t_0)=v(x_{0},t_0)=\min_{\partial\tilde{\Omega}\times [0,T]} \tilde{v}.$$
Using the same methods, under the assumption of
\begin{equation}\label{eeee3.13}
\tilde{v}_{n}(\tilde{x}_{0},t_{0})\geq -C,
\end{equation}
we obtain the positive lower bounds of $\tilde{h}_{p_{k}}\tilde{h}_{p_{l}}\tilde{u}_{kl}$, or
$$h_{p_{k}}(Du)\nu_{k}=\tilde{h}_{p_{k}}(D\tilde{u})\tilde{\nu}_{k}=\tilde{h}_{p_{n}}\geq\sqrt{\frac{1}{2}+\frac{C^4}{4}}-\frac{C^2}{2}.$$
We notice that
$$\tilde{h}_{p_{k}}\tilde{h}_{p_{l}}\tilde{u}_{kl}=\nu_{i}\nu_{j}u^{ij}.$$
Then by the positive lower bounds of $h_{p_{k}}h_{p_{l}}u_{kl}$ and $\tilde{h}_{p_{k}}\tilde{h}_{p_{l}}\tilde{u}_{kl}$, the desired result follows from
\begin{equation}\label{eeee3.0}
\langle\beta, \nu\rangle=\sqrt{u^{ij}\nu_{i}\nu_{j}h_{p_{k}}h_{p_{l}}u_{kl}},
\end{equation}
which is proved in \cite{JU}.

It remains to prove the key estimate (\ref{eeee3.8}) and (\ref{eeee3.13}).

We prove (\ref{eeee3.8}) first. By $D^2\tilde{h}\leq -\tilde{\theta}I$ and (\ref{e1.3}) we have
\begin{equation}\label{eeee3.151}
L\tilde{h}\leq -\tilde{\theta}\sum^{n}_{i=1} F^{ii},
\end{equation}
where
$$L:=F^{ij}\partial_{ij}-\partial_{t}.$$
On the other hand,
\begin{equation}\label{eeee3.100}
 \begin{aligned}
Lv=&h_{p_kp_lp_m}\nu_kF^{ij}u_{li}u_{mj}+2h_{p_kp_l}F^{ij}\nu_{kj}u_{li}\\
&+h_{p_kp_l}F^{ij}u_{lj}u_{ki}+h_{p_kp_l}\nu_kLu_l+h_{p_k}L\nu_k+h_{p_k}Lu_k.
 \end{aligned}
\end{equation}
Now we estimate the first term on the right hand side of (\ref{eeee3.100}).  By the diagonal basis and (\ref{e1.4}), we have
$$|h_{p_kp_lp_m}\nu_kF^{ij}u_{li}u_{mj}|\leq C\sum_{i=1}^{n}\frac{\partial F}{\partial \lambda_i}\lambda_i^2\leq C, $$
where $C$ is a constant depending only on $h$, $\Omega$, $\Lambda_1$, $\Lambda_2$, $u_0$ and $\delta$. Similarly, we also get
$$ |h_{p_kp_l}F^{ij}u_{lj}u_{ki}|\leq C\sum_{i=1}^{n}\frac{\partial F}{\partial \lambda_i}\lambda_i^2\leq C.$$
For the second term, by Cauchy inequality, we obtain
\begin{equation*}
\begin{aligned}
|2h_{p_{k}p_{l}}F^{ij}\nu_{kj}u_{li}|&\leq C\sum^{n}_{i=1}\frac{\partial F}{\partial \lambda_{i}}\lambda_{i}
=C\sum^{n}_{i=1}\sqrt{\frac{\partial F}{\partial \lambda_{i}}}\sqrt{\frac{\partial F}{\partial \lambda_{i}}}\lambda_{i}\\
&\leq C\left(\sum^{n}_{i=1}\frac{\partial F}{\partial \lambda_{i}}\right)\left(\sum^{n}_{i=1}\frac{\partial F}{\partial \lambda_{i}}\lambda^{2}_{i}\right)\\
&\leq C.
\end{aligned}
\end{equation*}
By (\ref{eeee1.1}) we have $Lu_l=f_{l}$. Then we get
$$|h_{p_kp_l}\nu_kLu_l|\leq C,\quad |h_{p_k}Lu_k|\leq C.$$
It follows from (\ref{e1.3}) that
$$|h_{p_{k}}L\nu_k|\leq C\sum^{n}_{i=1} F^{ii}.$$
Inserting these into (\ref{eeee3.100}) and using (\ref{e1.3}), it is immediate to check that there exists a positive constant $C$ depending only on $h$, $\Omega$, $\Lambda_1$, $\Lambda_2$, $u_0$ and $\delta$, such that
\begin{equation}\label{eeee3.15}
Lv\leq C\sum^{n}_{i=1} F^{ii}.
\end{equation}
Denote a neighborhood of $x_0$ in $\Omega$ by
$$\Omega_{\rho}:=\Omega\cap B_{\rho}(x_0),$$
where $\rho$ is a positive constant such that $\nu$ is well defined in $\Omega_{\rho}$.
To obtain the key estimate, we need to  consider the function
$$\Phi(x):=v(x,t)-v(x_{0},t_{0})+C_0\tilde{h}(x)+A|x-x_0|^2,$$
where $C_0$ and $A$ are positive constants to be determined. On $\partial\Omega\times[0,T]$, it is clear that $\Phi\geq 0$. Since $v$ is bounded, we can choose $A$ large enough such that on $\left(\Omega\cap \partial B_{\rho}(x_0)\right)\times[0,T]$
$$\Phi(x)=v(x,t)-v(x_{0},t_{0})+C_{0}\tilde{h}(x)+A|x-x_{0}|^{2}\geq v(x,t)-v(x_{0},t_{0})+A\rho^{2}\geq 0.$$
By the strict concavity of $\tilde{h}$, we have
$$ \Delta(C_{0}\tilde{h}(x)+A|x-x_{0}|^{2})\leq C(-C_{0}\tilde{\theta}+2A)\sum_{i=1}^n F^{ii}.$$
Then by choosing $C_{0}\gg A$, we obtain
$$\Delta(v(x,0)-v(x_{0},t_{0})+C_{0}\tilde{h}(x)+A|x-x_{0}|^{2})\leq 0.$$
We apply the maximum principle to get
\begin{equation*}
\begin{aligned}
&(v(x,0)-v(x_{0},t_{0})+C_{0}\tilde{h}(x)+A|x-x_{0}|^{2})|_{\Omega_{\rho}}\\
&\geq\min_{(\partial\Omega\cap B_{\rho}(x_{0}))\cup(\Omega\cap\partial B_{\rho}(x_{0}))} (v(x,0)-v(x_{0},t_{0})+C_{0}\tilde{h}(x)+A|x-x_{0}|^{2})\\
&\geq 0.
\end{aligned}
\end{equation*}
Combining (\ref{eeee3.151}) with (\ref{eeee3.15}) and letting $C_{0}$ be large enough, one yields
$$L\Phi\leq (-C_{0}\tilde{\theta}+C+2A)\sum_{i=1}^n F^{ii}\leq 0.$$
From the above arguments, we verify that $\Phi$ satisfies
\begin{equation}\label{eeee3.16}
\left\{ \begin{aligned}L\Phi&\leq 0,\ \ &&(x,t)\in\Omega_{\rho}\times[0,T] , \\
\Phi&\geq 0, &&(x,t)\in(\partial\Omega_{\rho}\times[0,T])\cup(\Omega_{\rho}\times\{t=0\}).
\end{aligned} \right.
\end{equation}
Using the maximum principle, we deduce that
$$\Phi\geq 0,\quad (x,t)\in\Omega_{\rho}\times[0,T].$$
Combining it with $\Phi(x_{0},t_{0})=0$, we obtain $\langle\nabla\Phi,e_{n}\rangle|_{(x_{0},t_{0})}\geq0$, which gives the desired key estimate (\ref{eeee3.8}).

Finally, we prove (\ref{eeee3.13}). The proof of (\ref{eeee3.13}) is similar to the one of (\ref{eeee3.8}). Define
$$\tilde L=\tilde{F}^{ij}\partial_{ij}+f_{p_i}\partial_i-\partial_t.$$
By (\ref{eeee3.12}) we see that $\tilde L\tilde u_l=0$, and thus
$$\tilde L\tilde v=\tilde F^{ij}\tilde u_{mj}\tilde u_{li}\tilde h_{p_kp_lp_m}\tilde \nu_k+2\tilde h_{p_kp_l}\tilde F^{ij}\tilde u_{li}\tilde \nu_{kj}+\tilde F^{ij}\tilde h_{p_k}\tilde \nu_{kij}+\tilde h_{p_kp_l}\tilde F^{ij}\tilde u_{lj}\tilde u_{ki}+\tilde h_{p_k}f_{p_i}\tilde \nu_{ki}.$$
By making use of  the following identities
\begin{equation*}
\frac{\partial \tilde F}{\partial \mu_{i}}=\lambda^{2}_{i}\frac{\partial F}{\partial \lambda_{i}},\quad \mu^{2}_{i}\frac{\partial \tilde F}{\partial \mu_{i}}=\frac{\partial F}{\partial \lambda_{i}}.
\end{equation*}
we deduce that $\tilde{F}$ satisfies the structure conditions (\ref{e1.2.0})-(\ref{e1.4}). Repeating the proof of (\ref{eeee3.15}), we have
\begin{equation}\label{eqq2.13aa}
\tilde L\tilde v\leq C\sum^{n}_{i=1} \tilde F^{ii},
\end{equation}
where $C$ depends only on $\tilde{\Omega}$, $\Omega$, $\Lambda_1$, $\Lambda_2$, $\delta$ and $u_0$.

Denote a neighborhood of $\tilde{x}_0$ in $\tilde\Omega$ by
$$\tilde\Omega_{r}:=\tilde\Omega\cap B_{r}(\tilde{x}_0),$$
where $r$ is a positive constant such that $\tilde\nu$ is well defined in $\tilde\Omega_{r}$. Consider
$$\tilde\Phi(y):=\tilde v(y,t)-\tilde v(\tilde{x}_0,t_0)+\tilde C_0 h(y)+\tilde A|y-\tilde{x}_0|^2,$$
where $\tilde C_0$ and $\tilde A$ are positive constants to be determined. It is clear that $\tilde\Phi\geq 0$ on $\partial\tilde \Omega\times [0,T]$. Since $\tilde v$ is bounded, we can choose $\tilde A$ large enough such that on $\left(\tilde\Omega\cap \partial B_{r}(\tilde{x}_0)\right)\times[0,T]$
$$\tilde\Phi(y)\geq\tilde v(y,t)-\tilde v(\tilde{x}_0,t_0)+\tilde A r^2\geq 0.$$
By the strict concavity of $h$, we have
$$ \Delta\left(\tilde C_{0}h(y)+\tilde A|y-\tilde{x}_0|^{2}\right)\leq C(-\tilde C_{0}\theta+2\tilde A)\sum_{i=1}^n\tilde F^{ii}.$$
Then by choosing $\tilde C_{0}\gg \tilde A$, we have
$$\Delta\left(\tilde v(y,0)-\tilde v(\tilde{x}_0,t_{0})+\tilde C_{0}h(y)+\tilde A|y-\tilde{x}_0|^{2}\right)\leq 0.$$
It follows from the maximum principle that
\begin{equation*}
\begin{aligned}
&(\tilde v(y,0)-\tilde v(\tilde{x}_0,t_{0})+\tilde C_{0}h(y)+\tilde A|y-\tilde{x}_0|^{2})|_{\tilde \Omega_{r}}\\
&\geq\min_{(\partial\tilde\Omega\cap B_{r}(\tilde{x}_0))\cup(\tilde\Omega\cap\partial B_{r}(\tilde{x}_0))} (\tilde v(y,0)-\tilde v(\tilde{x}_0,t_{0})+\tilde C_{0}h(y)+\tilde A|y-\tilde{x}_0|^{2})\\
&\geq 0.
\end{aligned}
\end{equation*}
By (\ref{e1.4}) and (\ref{eqq2.13aa}), it is not difficult to show that
$$\tilde L\tilde\Phi(y)\leq \left(C-\frac{\tilde C_0\theta}{2}+2\tilde A\right)\sum_{i=1}^n \tilde F^{ii}+2 \tilde A f_{p_i}(y_i-\tilde{x}_{0i})-\tilde C_0\left(\frac{\theta}{2}\sum_{i=1}^n \tilde F^{ii}-f_{p_i}\partial_i h\right). $$
In order to make
$$\tilde L\tilde\Phi(y)\leq 0,$$
we only need to choose $\tilde C_0\gg \tilde A$ and
$$|D f|\leq \frac{\theta\Lambda_1}{2}\cdot\frac{1}{\max_{\bar{\tilde \Omega}}|D h|}.$$
Consequently,
\begin{equation}\label{eqq2.13a}
\left\{ \begin{aligned}
   \tilde L\tilde \Phi&\leq 0,\ \  &&(y,t)\in\tilde\Omega_{r}\times[0,T],\\
   \tilde\Phi&\geq 0,\ \  &&(y,t)\in(\partial\tilde\Omega_{r}\times[0,T])\cup(\tilde\Omega_{r}\times\{t=0\}).
                          \end{aligned} \right.
\end{equation}
Therefore, we get (\ref{eeee3.13}) as same as the argument in (\ref{eeee3.8}). Thus we complete the proof of the lemma.
\end{proof}

Similar to Proposition 2.6 in \cite{SM}, by making use of (\ref{eeee3.15}) we can obtain
\begin{lemma}\label{llll3.0}
Fix a smooth function $H: \Omega\times\tilde{\Omega}\rightarrow R$ and define $\varphi(x,t)=H(x,Du(x,t))$. Then for any $(x,t)\in \Omega_{T}$,
\begin{equation*}
|L\varphi|\leq C\sum_{i=1}^n F^{ii}
\end{equation*}
holds for some positive constant $C$, which depends only on $H$, $\Omega$, $\tilde\Omega$, $\Lambda_1$, $\Lambda_2$ and $\delta$.
\end{lemma}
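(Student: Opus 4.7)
The plan is to compute $L\varphi$ directly via the chain rule, eliminate the third-order derivatives of $u$ using the evolution equation, and then bound the remaining terms via the structure hypotheses (\ref{e1.3})--(\ref{e1.4}). The key overall point is that $\sum_i F^{ii}\geq \Lambda_1>0$, so any bounded quantity can be absorbed into $C\sum_i F^{ii}$.

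First I compute derivatives of $\varphi(x,t)=H(x,Du(x,t))$. Writing $H=H(x,p)$, the chain rule gives
\begin{equation*}
\varphi_{ij}=H_{x_ix_j}+H_{x_ip_k}u_{kj}+H_{x_jp_k}u_{ki}+H_{p_kp_l}u_{ki}u_{lj}+H_{p_k}u_{kij},\qquad \varphi_t=H_{p_k}u_{kt}.
\end{equation*}
The third-derivative nuisance $F^{ij}H_{p_k}u_{kij}$ disappears after differentiating $u_t=F[D^2u]-f(x)$ in $x_k$, which yields $u_{kt}=F^{ij}u_{ijk}-f_{x_k}$. Substituting,
\begin{equation*}
L\varphi=F^{ij}H_{x_ix_j}+2F^{ij}H_{x_ip_k}u_{kj}+F^{ij}H_{p_kp_l}u_{ki}u_{lj}+H_{p_k}f_{x_k}.
\end{equation*}

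Next I bound each term working in the diagonal frame of $D^2u$. Since $H$ is smooth on the fixed compact $\bar\Omega\times\overline{\tilde\Omega}$, $f\in C^{2+\alpha_0}(\bar\Omega)$, and $Du(\Omega)=\tilde\Omega$, all partials of $H$ and $f$ appearing above are uniformly bounded. Term (i) satisfies $|F^{ij}H_{x_ix_j}|\leq C\sum_i F^{ii}$ directly. Term (ii) gives $|F^{ij}H_{x_ip_k}u_{kj}|\leq C\sum_i F^{ii}\lambda_i\leq C\bigl(\sum_i F^{ii}\bigr)^{1/2}\bigl(\sum_i F^{ii}\lambda_i^2\bigr)^{1/2}$ by Cauchy--Schwarz, and hence is bounded by $C\sqrt{\Lambda_2}\bigl(\sum_i F^{ii}\bigr)^{1/2}$ using (\ref{e1.4}). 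Term (iii) is dominated by $C\sum_i F^{ii}\lambda_i^2\leq C\Lambda_2$ by (\ref{e1.4}). Term (iv) is a bounded constant. Finally, $\sum_i F^{ii}\geq \Lambda_1$ by (\ref{e1.3}) implies that every bounded quantity $B$ satisfies $B\leq (B/\Lambda_1)\sum_i F^{ii}$ and $\bigl(\sum_i F^{ii}\bigr)^{1/2}\leq \Lambda_1^{-1/2}\sum_i F^{ii}$, so each piece is absorbed into $C\sum_i F^{ii}$.

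The only real subtlety, and the reason the lemma is useful, is the cancellation in the preceding step between $F^{ij}u_{kij}$ and $u_{kt}$; this is what reduces a seemingly third-order estimate on $\varphi$ to a second-order one controllable by the structure conditions alone. Everything else parallels the computation leading to (\ref{eeee3.15}) in Lemma \ref{llll3.4} and requires no new ideas. The resulting constant $C$ depends only on $\|H\|_{C^2(\bar\Omega\times\overline{\tilde\Omega})}$, $\|Df\|_{C^0(\bar\Omega)}$, $\Lambda_1$, $\Lambda_2$, and thus, via Lemma \ref{llll5.1c}, only on $H$, $\Omega$, $\tilde\Omega$, $\Lambda_1$, $\Lambda_2$, and $\delta$.
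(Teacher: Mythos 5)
Your proof is correct and follows essentially the same route the paper intends: the lemma is stated as a consequence of the computation leading to (\ref{eeee3.15}), which is exactly your chain-rule expansion, the cancellation of $F^{ij}u_{kij}-u_{kt}=f_{x_k}$ via the differentiated evolution equation, and the absorption of the remaining terms using (\ref{e1.3}), (\ref{e1.4}) and $\sum_i F^{ii}\geq\Lambda_1$. The only cosmetic remark is that the term $H_{p_k}f_{x_k}$ makes $C$ depend also on a bound for $|Df|$, which is harmless under the paper's standing smallness assumption on $|Df|$.
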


The following definition provides a basic connection between (\ref{eeee2.1.1001}) and (\ref{eeee2.1}) and will be used frequently in the sequel.
\begin{deff}\label{d1.10}
We say that $\tilde{u}$ in (\ref{eeee2.1.1001}) is a dual solution to (\ref{eeee2.1}).
\end{deff}
We now proceed to carry out the global $C^2$ estimate. The strategy is to  reduce the $C^2$ global estimate of $u$ and $\tilde{u}$ to the boundary.
\begin{lemma}\label{11113.5}
If $u$ is a strictly convex solution of (\ref{eeee2.1}) and there hold (\ref{e1.2}), (\ref{e1.2.1}) and (\ref{e1.3}), then there exists a positive constant $C$ depending only on $n$, $\Omega$, $\tilde{\Omega}$, $\Lambda_1$, $u_0$, $\delta$ and $\operatorname{diam}(\Omega)$, such that
\begin{equation}\label{ee3.170}
\sup_{\Omega_T}| D^{2}u|\leq \max_{\partial\Omega\times[0,T]}| D^{2}u|+\max_{\bar{\Omega}}| D^{2}u_{0}|+C\sup_{\Omega}|D^2f|.
\end{equation}
\end{lemma}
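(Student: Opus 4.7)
The plan is to reduce the global $C^{2}$ bound to a bound on the parabolic boundary by exploiting two standard ingredients: (i) since $u$ is convex, $|D^{2}u|$ is controlled by $\Delta u$, so it suffices to bound $\Delta u$; (ii) after differentiating the equation twice, concavity of $F$ makes $\Delta u$ a subsolution of the linearized operator $L=F^{ij}\partial_{ij}-\partial_{t}$ up to a term controlled by $|D^{2}f|$, which can be absorbed by a quadratic barrier using the uniform positive lower bound on $\sum_{i}F^{ii}$ provided by (\ref{e1.3}).

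First I would differentiate (\ref{eeee1.1}) twice in the direction $e_{k}$ and sum over $k$. Using the concavity assumption (\ref{e1.2.1}) in the form $F^{ij,rs}u_{ijk}u_{rsk}\le 0$, this yields
\begin{equation*}
L(\Delta u)=F^{ij}\partial_{ij}(\Delta u)-\partial_{t}(\Delta u)=-F^{ij,rs}u_{ijk}u_{rsk}+\Delta f\ \ge\ \Delta f\ \ge\ -n\sup_{\Omega}|D^{2}f|.
\end{equation*}
Next, set
\begin{equation*}
\phi(x,t):=\Delta u(x,t)+A|x|^{2},\qquad A:=\frac{n}{2\Lambda_{1}}\sup_{\Omega}|D^{2}f|,
\end{equation*}
and compute, using (\ref{e1.3}) together with $L(|x|^{2})=2\sum_{i}F^{ii}\ge 2\Lambda_{1}$,
\begin{equation*}
L\phi\ =\ L(\Delta u)+2A\sum_{i}F^{ii}\ \ge\ -n\sup_{\Omega}|D^{2}f|+2A\Lambda_{1}\ \ge\ 0.
\end{equation*}
The parabolic maximum principle then forces $\phi$ to attain its maximum on the parabolic boundary $\partial\Omega\times[0,T]\cup\bar\Omega\times\{0\}$, so
\begin{equation*}
\max_{\Omega_{T}}\Delta u\ \le\ \max_{\partial\Omega\times[0,T]}\Delta u+\max_{\bar\Omega}\Delta u_{0}+A\bigl(\operatorname{diam}\Omega\bigr)^{2}.
\end{equation*}

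Finally, since $u$ is strictly convex all eigenvalues $\lambda_{i}(D^{2}u)$ are nonnegative, hence
\begin{equation*}
|D^{2}u|\ =\ \max_{i}\lambda_{i}\ \le\ \sum_{i}\lambda_{i}\ =\ \Delta u\ \le\ n|D^{2}u|,
\end{equation*}
which converts the $\Delta u$ bound into the desired estimate (\ref{ee3.170}) with $C=n(\operatorname{diam}\Omega)^{2}/(2\Lambda_{1})$. Note that $\Lambda_{1}$ depends on the data $u_{0}$, $\delta$, $\Omega$, $\tilde\Omega$ through Lemma \ref{llll5.1c}, which is where those quantities enter the final constant.

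I do not expect any real obstacle here: the calculation is routine once the auxiliary function $\phi$ is chosen. The one point that needs care is making sure the correction term $A|x|^{2}$ is large enough to dominate $\Delta f$ in the linearized operator, which is exactly where the structure condition (\ref{e1.3}) (uniform positive lower bound on $\sum_{i}F^{ii}$) is indispensable; without it the constant $A$ could blow up. Neither the concavity assumption (\ref{e1.2.2}) on $\tilde F$ nor the bound (\ref{e1.4}) on $\sum F^{ii}\lambda_{i}^{2}$ are used at this step, which is consistent with the statement.
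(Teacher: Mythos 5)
Your strategy is the same as the paper's in all essentials: differentiate the equation twice, use the concavity (\ref{e1.2.1}) of $F$ to discard the $F^{ij,rs}$ term, and compare with a barrier whose $F^{ij}\partial_{ij}$-image dominates the $D^2f$ term via the lower bound $\sum_i F^{ii}\ge\Lambda_1$ from (\ref{e1.3}). The two differences are cosmetic but worth recording. First, the paper runs the argument for $u_{\xi\xi}$ with $\xi$ an arbitrary fixed unit vector, whereas you run it for $\Delta u$; since passing between $|D^2u|$ and $\Delta u$ for a convex function costs a factor $n$ in one direction, your version delivers (\ref{ee3.170}) with the first two terms multiplied by $n$ rather than with coefficient $1$ as stated. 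This is harmless for every later use of the lemma, and is removed at no cost by simply keeping $\xi$ fixed: your own computation gives $Lu_{\xi\xi}\ge f_{\xi\xi}\ge-\sup_\Omega|D^2f|$, and the identical barrier argument applied to $u_{\xi\xi}+A|x-x_*|^2$ then yields the stated inequality after taking the supremum over $\xi$. Second, you use the quadratic barrier $A|x|^2$ with $L(|x|^2)=2\sum_iF^{ii}\ge2\Lambda_1$, while the paper uses the exponential barrier $\Lambda_1^{-1}\bigl(ne^{d}-\sum_ie^{x_i}\bigr)$ after placing $\Omega\subset[0,d]^n$; both work for exactly the same reason, and the exponential gains nothing here. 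One small normalization to make explicit: to bound the contribution of your barrier on the parabolic boundary by $A(\operatorname{diam}\Omega)^2$ you should translate so that some fixed point of $\bar\Omega$ is the origin, or use $|x-x_*|^2$ with $x_*\in\bar\Omega$. With those two adjustments the proof is complete and matches the paper's.
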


\begin{proof}
Without loss of generality, we may assume that $\Omega$ lies in cube $[0,d]^{n}$. Let
$$L:=F^{ij}\partial_{ij}-\partial_t.$$
For any unit vector $\xi$, differentiating the equation in (\ref{eeee2.1}) twice in direction $\xi$ gives
\begin{equation*}
Lu_{\xi\xi}+F^{ij,rs}u_{ij\xi}u_{rs\xi}=f_{\xi\xi}.
\end{equation*}
Then by the concavity of $F$ on $\Gamma^+_n$, we have
\begin{equation*}
Lu_{\xi\xi}=-F^{ij,rs}u_{ij\xi}u_{rs\xi}+f_{\xi\xi}\geq f_{\xi\xi}.
\end{equation*}

Let $$v=\sup_{\partial\Omega_T}u_{\xi\xi}+\frac{1}{\Lambda_1}\left(ne^{ d}-\sum^{n}_{i=1}e^{ x_{i}}\right)\sup_{\Omega}|f_{\xi\xi}|.$$
By direct calculation and (\ref{e1.3}), we obtain
\begin{equation*}
\begin{aligned}
F^{ij}\partial_{ij}v=&-\frac{1}{\Lambda_1}\sup_{\Omega}|f_{\xi\xi}|\left(\sum^{n}_{i=1}e^{ x_{i}}F^{ii}\right)\\
\leq &-\frac{1}{\Lambda_1}\sup_{\Omega}|f_{\xi\xi}|\left(\sum^{n}_{i=1}F^{ii}\right)\\
\leq &-\sup_{\Omega}|f_{\xi\xi}|.
\end{aligned}
\end{equation*}
Therefore,
$$Lv=F^{ij}\partial_{ij}v-\partial_t v\leq -\sup_{\Omega}|f_{\xi\xi}|,$$
and thus
$$L(v-u_{\xi\xi})\leq -\left(\sup_{\Omega}|f_{\xi\xi}|+f_{\xi\xi}\right)\leq 0.$$
It is obvious that $v-u_{\xi\xi}\geq 0$ on $\partial\Omega_T$. Then by the maximum principle we obtain
\begin{equation*}
\begin{aligned}
\sup_{\Omega_T}u_{\xi\xi}&\leq \sup_{\Omega_T}v\leq \sup_{\partial\Omega_T}u_{\xi\xi}+\frac{ne^{d}}{\Lambda_1}\sup_{\Omega}|f_{\xi\xi}|\\
&\leq \max_{\partial\Omega\times[0,T]}| D^{2}u|+\max_{\bar{\Omega}}| D^{2}u_{0}|+C\sup_{\Omega}|D^2f|.
\end{aligned}
\end{equation*}
This completes the proof of (\ref{ee3.170}).
\end{proof}

Next, we estimate the second order derivative on the boundary. By differentiating the boundary condition $h(Du)=0$ in any tangential direction $\varsigma$, we have
\begin{equation}\label{eq3.2}
   u_{\beta \varsigma}=h_{p_k}(Du)u_{k\varsigma}=0.
\end{equation}
The second order derivative of $u$ on the boundary is controlled by $u_{\beta \varsigma}$, $u_{\beta \beta}$ and $u_{\varsigma\varsigma}$. In the following we give the arguments as in \cite{JU}, one can see there for more details.

At $x\in \partial\Omega$, any unit vector $\xi$ can be written in terms of a tangential component $\varsigma(\xi)$ and a component in the direction $\beta$ by
$$\xi=\varsigma(\xi)+\frac{\langle \nu,\xi\rangle}{\langle\beta,\nu\rangle}\beta,$$
where
$$\varsigma(\xi):=\xi-\langle \nu,\xi\rangle \nu-\frac{\langle \nu,\xi\rangle}{\langle\beta,\nu\rangle}\beta^T,$$
and
$$\beta^T:=\beta-\langle \beta,\nu\rangle \nu.$$
By the strict obliqueness estimate (\ref{eeee3.4}), we have
\begin{equation}\label{eq3.3}
\begin{aligned}
|\varsigma(\xi)|^{2}&=1-\left(1-\frac{|\beta^{T}|^{2}}{\langle\beta,\nu\rangle^{2}}\right)\langle\nu,\xi\rangle^{2}
-2\langle\nu,\xi\rangle\frac{\langle\beta^{T},\xi\rangle}{\langle\beta,\nu\rangle}\\
&\leq 1+C\langle\nu,\xi\rangle^{2}-2\langle\nu,\xi\rangle\frac{\langle\beta^{T},\xi\rangle}{\langle\beta,\nu\rangle}\\
&\leq C.
\end{aligned}
\end{equation}
Denote $\varsigma:=\frac{\varsigma(\xi)}{|\varsigma(\xi)|}$, then by (\ref{eq3.3}) and (\ref{eeee3.4}) we obtain
\begin{equation}\label{eq3.4}
\begin{aligned}
u_{\xi\xi}&=|\varsigma(\xi)|^{2}u_{\varsigma\varsigma}+2|\varsigma(\xi)|\frac{\langle\nu,\xi\rangle}{\langle\beta,\nu\rangle}u_{\beta\varsigma}+
\frac{\langle\nu,\xi\rangle^{2}}{\langle\beta,\nu\rangle^{2}}
u_{\beta\beta}\\
&=|\varsigma(\xi)|^{2}u_{\varsigma\varsigma}+\frac{\langle\nu,\xi\rangle^{2}}{\langle\beta,\nu\rangle^{2}}
u_{\beta\beta}\\
&\leq C(u_{\varsigma\varsigma}+u_{\beta\beta}),
\end{aligned}
\end{equation}
where $C$ depends only on $\Omega$, $\tilde \Omega$, $\Lambda_1$, $\Lambda_2$, $\delta$ and the constant $C_1$ in (\ref{eeee3.4}). Therefore, we only need to estimate $u_{\beta\beta}$ and $u_{\varsigma\varsigma}$ respectively.

Further we have
\begin{lemma}\label{lem3.2}
Let $F$ satisfy the structure conditions (\ref{e1.2.0})-(\ref{e1.4}) and $f\in {\mathscr{A}}_\delta$. If $u$ is a strictly convex solution of (\ref{eeee2.1}), then there exists a positive constant $C$ depending only on $u_0$, $\Omega$, $\tilde{\Omega}$, $\Lambda_1$, $\Lambda_2$ and $\delta$, such that
\begin{equation}\label{eq3.5}
\max_{\partial\Omega_T}u_{\beta\beta} \leq C.
\end{equation}
\end{lemma}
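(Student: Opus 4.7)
The strategy follows the barrier construction of Urbas used already in the proof of Lemma \ref{llll3.4}, adapted to the second oblique derivative on the boundary. Let $(x_0,t_0)\in\partial\Omega\times[0,T]$ attain $M_0:=\max_{\partial\Omega\times[0,T]}u_{\beta\beta}$. If $t_0=0$ the bound follows from $|D^2 u_0|$, so I may assume $t_0>0$; after a rotation I take $\nu(x_0)=e_n$.

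\textbf{Step 1: boundary identities.} Since $h(Du)=0$ on $\partial\Omega\times[0,T]$, differentiating once tangentially recovers (\ref{eq3.2}), namely $u_{\beta\tau}=0$ for every tangent $\tau$; in particular $u_{\beta\,i}(x_0,t_0)=0$ for $i<n$, which gives $u_{\beta\beta}(x_0,t_0)=\beta^{n}u_{\beta n}(x_0,t_0)$. Writing $\partial\Omega$ locally as $x_{n}=\phi(x')$ with $\phi(0)=0$, $D\phi(0)=0$, and differentiating the boundary condition a second time tangentially, at $x_0$ I obtain the identity
$$h_{p_{k}p_{l}}u_{ki}u_{lj}+h_{p_{k}}u_{kij}+u_{\beta n}\,\phi_{ij}(0)=0,\qquad 1\le i,j\le n-1,$$
which reduces the task of bounding $u_{\beta\beta}(x_0,t_0)$ to controlling the third-order quantity $h_{p_k}u_{kij}$ at $x_0$ (equivalently, controlling the normal derivative of $u_{\beta\beta}$ itself) modulo curvature data.

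\textbf{Step 2: differential inequality for $u_{\beta\beta}$.} Applying $\partial_{k}\partial_{l}$ to (\ref{eeee1.1}) and invoking the concavity condition (\ref{e1.2.1}) yields $Lu_{kl}\ge f_{kl}$, where $L:=F^{ij}\partial_{ij}-\partial_{t}$. Since $u_{\beta\beta}=h_{p_{k}}(Du)h_{p_{l}}(Du)u_{kl}$, the product rule produces an expression for $L(u_{\beta\beta})$ whose lower-order terms can be absorbed using Lemma \ref{llll3.0} together with the structure conditions (\ref{e1.3})--(\ref{e1.4}), exactly as in the derivation of (\ref{eeee3.15}); the concavity contraction $h_{p_{k}}h_{p_{l}}F^{ij,rs}u_{ijk}u_{rsl}\le 0$ then gives
$$L(u_{\beta\beta})\ge -C\Bigl(1+\sum_{i=1}^{n}F^{ii}\Bigr).$$

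\textbf{Step 3: barrier and Hopf.} In the parabolic cylinder $Q:=(\Omega\cap B_{\rho}(x_0))\times(0,t_0]$, consider
$$\Psi(x,t):=u_{\beta\beta}(x_0,t_0)-u_{\beta\beta}(x,t)+A|x-x_0|^{2}+B\,\tilde h(x).$$
The estimate $L\tilde h\le -\tilde\theta\sum_{i} F^{ii}$ from (\ref{eeee3.151}) combined with Step 2 shows that $B\gg A\gg 1$ forces $L\Psi\le 0$ on $Q$. On $\partial\Omega\cap B_{\rho}(x_0)\times[0,t_0]$ one has $\tilde h=0$ and $u_{\beta\beta}\le M_0$, so $\Psi\ge 0$; on $\Omega\cap\partial B_{\rho}(x_0)\times[0,t_0]$ taking $A$ large makes $\Psi\ge 0$; and at $t=0$ one uses $|D^{2}u_0|$. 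The maximum principle gives $\Psi\ge 0$ in $Q$, and since $\Psi(x_0,t_0)=0$ is a boundary minimum, Hopf's lemma yields $\partial_{n}\Psi(x_0,t_0)\ge 0$, i.e.\ $\partial_{n}u_{\beta\beta}(x_0,t_0)\le B$. Feeding this into the boundary identity of Step 1 (and using the strict obliqueness bound $\beta^{n}\ge 1/C_{1}$ from (\ref{eeee3.4}) to invert the contraction) produces the required bound $M_0\le C$ depending only on $u_0,\Omega,\tilde\Omega,\Lambda_{1},\Lambda_{2},\delta$.

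\textbf{Main obstacle.} The bookkeeping in Step 2 is the delicate part: since $\beta^{k}=h_{p_{k}}(Du)$ depends on $Du$, the product rule produces cross-terms involving $F^{ij}u_{ikl}$ whose signs must be handled by the concavity contraction $h_{p_{k}}h_{p_{l}}F^{ij,rs}u_{ijk}u_{rsl}\le 0$ and by Cauchy--Schwarz applied with the trace estimates (\ref{e1.3})--(\ref{e1.4}). A secondary difficulty is that the quadratic barrier $A|x-x_0|^{2}$ in Step 3 must dominate the interior values of $u_{\beta\beta}$ on $\Omega\cap\partial B_{\rho}(x_0)$, which are not a priori bounded independently of the very quantity one is trying to control; this is resolved by coupling the argument with the global-to-boundary reduction of Lemma \ref{11113.5}, so that the bootstrap closes.
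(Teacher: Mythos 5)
Your proposal does not follow the paper's route, and as written it has a genuine gap. The paper's proof rests on one simple identity: since $\beta^k=h_{p_k}(Du)$, one has $\partial_\beta\bigl(h(Du)\bigr)=h_{p_k}(Du)u_{kl}\beta^l=\beta^ku_{kl}\beta^l=u_{\beta\beta}$, so the double oblique derivative is the \emph{first} $\beta$-derivative of the scalar $h(Du)$, which vanishes on $\partial\Omega$ and obeys $|L(h(Du))|\leq C\sum_iF^{ii}$ by Lemma \ref{llll3.0}. One therefore runs the barrier argument of (\ref{eeee3.16}) on $\Psi=-h(Du)+C_0\tilde h+A|x-x_0|^2$ and reads off $u_{\beta\beta}(x_0,t_0)=\partial_\beta(h(Du))(x_0,t_0)\leq C_0\tilde h_\beta\leq C$. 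No third derivatives of $u$, no computation of $L(u_{\beta\beta})$, and no second differentiation of the boundary condition are needed.

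Your version instead applies $L$ to the second-order quantity $u_{\beta\beta}=h_{p_k}h_{p_l}u_{kl}$, and this is where it breaks down. First, the claimed inequality $L(u_{\beta\beta})\geq -C(1+\sum_iF^{ii})$ is not justified: expanding the product rule produces, besides the good term $h_{p_k}h_{p_l}Lu_{kl}\geq h_{p_k}h_{p_l}f_{kl}$, terms such as $2h_{p_kp_m}h_{p_l}u_{kl}\,Lu_m=2h_{p_kp_m}h_{p_l}u_{kl}f_m$ and $2h_{p_kp_lp_m}h_{p_q}u_{lq}F^{ij}u_{ki}u_{mj}$, which carry a loose factor of $|D^2u|$; at this stage of the paper the global bound on $|D^2u|$ is not yet available (it is obtained only in Lemma \ref{lem3.5}, downstream of the present lemma), so these terms cannot be absorbed into $C(1+\sum_iF^{ii})$. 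Second, and more fundamentally, even granting Step 3 its conclusion, what you obtain is a bound on the \emph{third-order} quantity $\partial_nu_{\beta\beta}(x_0,t_0)$, and there is no identity that converts this into a bound on $u_{\beta\beta}(x_0,t_0)$ itself. Your Step 1 identity controls $h_{p_k}u_{kij}$ only for tangential indices $i,j\leq n-1$; it is precisely the closing identity for the double \emph{tangential} estimate (Lemma \ref{lem3.3}, where $\theta M^2\leq h_{p_k}u_{k11}=u_{11\beta}\leq CM$ closes the loop). By contrast $\partial_nu_{\beta\beta}$ involves $h_{p_k}h_{p_l}u_{kln}$ with a normal index, and since $\beta$ is not tangential one cannot differentiate $h(Du)=0$ twice along $\beta$ to manufacture the analogous quadratic closure. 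The ``bootstrap'' invoked in your final paragraph therefore does not close, and the argument as a whole does not produce the bound on $M_0$.
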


\begin{proof}
Let $x_0\in\partial\Omega$, $t_0\in [0,T]$ satisfy $u_{\beta\beta}(x_0,t_0)=\max_{\partial\Omega_T}u_{\beta\beta}$. Consider the barrier function
$$\Psi:=-h(Du)+C_0\tilde{h}+A|x-x_0|^2.$$
For any $x\in \partial\Omega$, $Du(x)\in \partial\tilde{\Omega}$, then $h(Du)=0$. It is clear that $\tilde{h}=0$ on $\partial\Omega$. As same as  the proof of (\ref{eeee3.16}), we can find the constants $C_0$ and $A$ such that
\begin{equation}\label{eq3.6}
\left\{ \begin{aligned}L\Psi&\leq 0,\qquad&&(x,t)\in\Omega_{\rho}\times[0,T] , \\
\Psi&\geq 0,\qquad &&(x,t)\in(\partial\Omega_{\rho}\times[0,T])\cup(\Omega_{\rho}\times\{t=0\}).
\end{aligned} \right.
\end{equation}
By the maximum principle, we get
$$\Psi\geq 0,\quad (x,t)\in\Omega_{\rho}\times[0,T].$$
Combining it with $\Psi(x_{0},t_{0})=0$ we obtain $\Psi_{\beta}(x_{0},t_{0})\geq 0$, which implies
$$\frac{\partial h}{\partial \beta}(Du(x_0,t_0))\leq C_0.$$

On the other hand, we see that at $(x_0,t_0)$,
$$\frac{\partial h}{\partial \beta}=\langle Dh(Du),\beta\rangle=\frac{\partial h}{\partial p_k}u_{kl}\beta^l=\beta^ku_{kl}\beta^l=u_{\beta\beta}.$$
Therefore,
$$u_{\beta\beta}=\frac{\partial h}{\partial \beta}\leq C,$$whence the result follows.
\end{proof}

Next, we estimate the double tangential derivative.
\begin{lemma}\label{lem3.3}
Let $F$ satisfy the structure conditions (\ref{e1.2.0})-(\ref{e1.4}) and $f\in {\mathscr{A}}_\delta$. If $u$ is a strictly convex solution of (\ref{eeee2.1}), then there exists a positive constant $C$ depending only on $u_0$, $\Omega$, $\tilde{\Omega}$, $\Lambda_1$, $\Lambda_2$ and $\delta$, such that
\begin{equation}\label{eq3.7}
\max_{\partial\Omega\times[0,T]}\max_{|\varsigma|=1, \langle\varsigma,\nu\rangle=0} u_{\varsigma\varsigma}\leq C.
\end{equation}
\end{lemma}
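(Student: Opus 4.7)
The plan is to adapt the barrier-and-Hopf strategy of Lemma \ref{lem3.2} to double tangential second derivatives; the main new obstacle is that tangential directions vary along $\partial\Omega$.

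Let $M := \max_{(x,t)\in\partial\Omega\times[0,T]}\max_{|\varsigma|=1,\,\langle\varsigma,\nu(x)\rangle=0} u_{\varsigma\varsigma}(x,t)$, attained at $(x_0,t_0)$ in direction $\varsigma_0$. After a rotation I may take $\nu(x_0)=e_n$ and $\varsigma_0=e_1$. As $e_1$ is the maximizing tangential direction, it is a top eigenvector of the tangential block $(u_{ij})_{i,j<n}$ at $(x_0,t_0)$ with eigenvalue $M$, and \eqref{eq3.2} pins down $u_{n1}(x_0,t_0) = -\beta^1 M/\langle\beta,\nu\rangle$. Extend $\varsigma_0$ to a smooth tangential field near $x_0$ by $\varsigma(x) := e_1 - \langle e_1,\nu(x)\rangle\,\nu(x)$, which is $\perp\nu$ on $\partial\Omega$, equal to $e_1$ at $x_0$, and satisfies $|\varsigma(x)|\leq 1$. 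Consider
$$W(x,t) := u_{kl}(x,t)\,\varsigma^k(x)\,\varsigma^l(x).$$
The definition of $M$ and $|\varsigma(x)|\leq 1$ yield $W\leq M$ on $\partial\Omega\times[0,T]$, with equality at $(x_0,t_0)$.

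Applying $L := F^{ij}\partial_{ij}-\partial_t$ to $W$ and twice differentiating \eqref{eeee1.1} in direction $\varsigma(x)$, the concavity \eqref{e1.2.1} of $F$ produces a nonnegative principal quadratic term $-F^{ij,rs}u_{ij\varsigma}u_{rs\varsigma}\geq 0$, while Lemma \ref{llll3.0} together with the structure conditions \eqref{e1.3}--\eqref{e1.4} absorbs the commutator terms coming from $D\varsigma$ and $D^2\varsigma$, yielding
$$LW \geq -C\sum_{i=1}^n F^{ii}\quad\text{in a neighborhood }\Omega_\rho\times[0,T]\text{ of }(x_0,t_0).$$
As in Lemma \ref{lem3.2}, the barrier $\Psi(x,t) := (M-W(x,t)) + C_0\tilde h(x) + A|x-x_0|^2$, with $C_0\gg A>0$ chosen via the strict concavity of $\tilde h$, satisfies $L\Psi\leq 0$ in $\Omega_\rho\times[0,T]$ and $\Psi\geq 0$ on its parabolic boundary, so the maximum principle forces $\Psi\geq 0$ in the whole cylinder. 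Since $\Psi(x_0,t_0)=0$, the Hopf lemma delivers $\Psi_\beta(x_0,t_0)\geq 0$, equivalently $W_\beta(x_0,t_0) \leq C_0\,\tilde h_\beta(x_0) + C$.

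To close the argument, differentiate the boundary condition $h(Du)=0$ twice along $\varsigma(x)$ on $\partial\Omega$ to obtain, at $(x_0,t_0)$, the identity
$$h_{p_k}(Du)\,u_{k\varsigma\varsigma} = -h_{p_kp_l}(Du)\,u_{k\varsigma}\,u_{l\varsigma} + O(1),$$
where the $O(1)$ collects terms bounded in terms of $u_{\beta\beta}$, $\langle\beta,\nu\rangle^{-1}$ and derivatives of $\nu$. The strict convexity of $\tilde\Omega$ ($D^2 h\leq -\theta I$) and the Cauchy-Schwarz inequality give $-h_{p_kp_l}u_{k\varsigma}u_{l\varsigma} \geq \theta|Du_\varsigma|^2 \geq \theta u_{\varsigma\varsigma}^2 = \theta M^2$. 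Unwinding into $W_\beta(x_0,t_0)$ via the explicit formulas $\varsigma^k_i(x_0) = -\delta_{kn}\,\partial_i\nu^1(x_0)$ and $u_{n1}(x_0,t_0) = -\beta^1 M/\langle\beta,\nu\rangle$, the leading behavior is
$$W_\beta(x_0,t_0) \geq \frac{\theta M^2}{\langle\beta,\nu\rangle} - CM - C',$$
and combined with the Hopf-derived upper bound this forces $M\leq C$.

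The main obstacle is the final step: extracting the genuinely quadratic-in-$M$ contribution $\theta M^2/\langle\beta,\nu\rangle$ from the second tangential differentiation of $h(Du)=0$ while systematically controlling the linear-in-$M$ and bounded remainders produced by the variation of $\varsigma(x)$ and $\nu(x)$ along $\partial\Omega$. The obliqueness estimate Lemma \ref{llll3.4} is indispensable for keeping these remainders finite, and the strict convexity of $\tilde\Omega$ supplies the decisive positive sign that makes the quadratic term dominate.
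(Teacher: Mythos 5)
Your overall architecture --- barrier at the maximizing boundary point, Hopf lemma to bound a first normal derivative of a second-derivative quantity from above, then second tangential differentiation of $h(Du)=0$ plus the uniform concavity of $h$ to produce the competing lower bound $\theta M^{2}\leq CM$ --- is exactly the mechanism of the paper's proof, and that final step of yours is correct (up to the harmless spurious factor $\langle\beta,\nu\rangle^{-1}$: one has $h_{p_k}u_{k11}=u_{11\beta}\geq\theta M^2$ directly). The gap is in the middle: your claim that
\begin{equation*}
LW\geq -C\sum_{i=1}^{n}F^{ii},\qquad W=u_{kl}\,\varsigma^{k}(x)\varsigma^{l}(x),
\end{equation*}
does not follow from concavity and Lemma \ref{llll3.0}. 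Computing $LW$ for a \emph{varying} field $\varsigma(x)$ produces, besides the good term $\varsigma^k\varsigma^l Lu_{kl}\geq f_{kl}\varsigma^k\varsigma^l$ and zeroth-order-in-$D^3u$ terms $2F^{ij}u_{kl}(\varsigma^{k}_{ij}\varsigma^{l}+\varsigma^{k}_{i}\varsigma^{l}_{j})$, the cross term $4F^{ij}u_{kli}\varsigma^{k}_{j}\varsigma^{l}$. This is a third-derivative quantity contracted only once against $F^{ij}$; it is not of the form $H(x,Du)$, so Lemma \ref{llll3.0} does not apply to it, it cannot be bounded by $C\sum_i F^{ii}$, and it cannot be absorbed into the concavity term $-F^{ij,rs}u_{ij\varsigma}u_{rs\varsigma}$, which is merely nonnegative rather than coercive. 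At this stage of the argument there is no control whatsoever on $|D^3u|$, so this term genuinely breaks the differential inequality.

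The paper circumvents precisely this obstacle by never putting a varying direction inside $L$. It fixes the constant direction $e_1$ (so $Lu_{11}\geq f_{11}$ with no commutators) and uses the algebraic boundary decomposition \eqref{eq3.8}--\eqref{eq3.10} to show that $u_{11}/M+2\langle\nu,e_1\rangle\langle\beta^{T},e_1\rangle/\langle\beta,\nu\rangle-C\langle\nu,e_1\rangle^{2}-1\leq 0$ on $\partial\Omega\times[0,T]$, with equality at $(x_0,t_0)$; all the $x$-dependence of the tangential projection is thereby shunted into the correction $-2\langle\nu,e_1\rangle\langle\beta^{T},e_1\rangle/\langle\beta,\nu\rangle+C\langle\nu,e_1\rangle^{2}+1$, which is a function of $(x,Du)$ only and hence \emph{is} covered by Lemma \ref{llll3.0}. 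Note also that the paper normalizes by $M$ in \eqref{eq3.11}, which keeps $A$ and $C_0$ independent of $M$ (using \eqref{ee3.170} together with \eqref{eq3.4} and Lemma \ref{lem3.2} to bound $u_{11}/M$ on $\Omega\cap\partial B_{\rho}(x_0)$); in your unnormalized version $A$ and $C_0$ would have to grow like $M$, which your final quadratic-versus-linear comparison could still tolerate, but this is moot until the differential inequality for the barrier is repaired. To fix your proof, replace $W$ by the paper's $w$ (or an equivalent fixed-direction construction).
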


\begin{proof}
Assume that $x_{0}\in\partial\Omega$, $t_{0}\in[0,T]$ and $e_{n}$ is the unit inward normal vector of $\partial\Omega$ at $x_0$. Let
$$\max_{\partial\Omega\times[0,T]}\max_{|\varsigma|=1, \langle\varsigma,\nu\rangle=0} u_{\varsigma\varsigma}=u_{11}(x_{0},t_{0})=:M.$$

For any $x\in \partial\Omega$,  we have by (\ref{eq3.3}),
\begin{equation}\label{eq3.8}
\begin{aligned}
u_{\xi\xi}&=|\varsigma(\xi)|^2u_{\varsigma\varsigma}+ \frac{\langle \nu,\xi\rangle^2}{\langle \beta,\nu\rangle^2}u_{\beta\beta}\\
          &\leq \left(1+C\langle \nu,\xi\rangle^2-2\langle \nu,\xi\rangle \frac{\langle \beta^T,\xi\rangle}{\langle \beta,\nu\rangle}\right)M
                 + \frac{\langle \nu,\xi\rangle^2}{\langle \beta,\nu\rangle^2}u_{\beta\beta}.
\end{aligned}
\end{equation}
Without loss of generality, we assume that $M\geq 1$. Then by (\ref{eeee3.4}) and (\ref{eq3.5}) we have
\begin{equation}\label{eq3.9}
  \frac{u_{\xi\xi}}{M}+2\langle \nu,\xi\rangle \frac{\langle \beta^T,\xi\rangle}{\langle \beta,\nu\rangle}
             \leq 1+C\langle \nu,\xi\rangle^2.
\end{equation}
Let $\xi=e_1$, then
\begin{equation}\label{eq3.10}
  \frac{u_{11}}{M}+2\langle \nu,e_1\rangle \frac{\langle \beta^T,e_1\rangle}{\langle \beta,\nu\rangle}
             \leq 1+C\langle \nu,e_1\rangle^2.
\end{equation}
We see that the function
\begin{equation}\label{eq3.11}
w:=A|x-x_0|^2-\frac{u_{11}}{M}-2\langle \nu,e_1\rangle \frac{\langle \beta^T,e_1\rangle}{\langle \beta,\nu\rangle}+C\langle \nu,e_1\rangle^2+1
\end{equation}
satisfies
$$w|_{\partial\Omega\times[0,T]}\geq 0, \quad w(x_{0},t_{0})=0.$$
Then, it follows by (\ref{ee3.170}) that we can choose the constant $A$ large enough such that
$$w|_{(\partial B_{\rho}(x_{0})\cap\Omega)\times[0,T]}\geq 0.$$

Consider
$$-2\langle\nu,e_{1}\rangle\frac{\langle\beta^{T},e_{1}\rangle}{\langle\beta,\nu\rangle}+C\langle\nu,e_{1}\rangle^{2}+1$$
as a known function depending on $x$ and $Du$. Then by Lemma \ref{llll3.0}, we obtain
$$\left|L\left(-2\langle\nu,e_{1}\rangle\frac{\langle\beta^{T},e_{1}\rangle}{\langle\beta,\nu\rangle}
+C\langle\nu,e_{1}\rangle^{2}+1\right)\right|\leq C\sum_{i=1}^n F^{ii}.$$
Combining the above inequality with the proof of Lemma \ref{11113.5},  we have
$$Lw\leq C\sum_{i=1}^n F^{ii}.$$

As in the proof of Lemma \ref{lem3.2}, we consider the function
$$\Upsilon:=w+C_0\tilde{h}.$$
A standard barrier argument shows that
$$\Upsilon_{\beta}(x_0,t_0)\geq0.$$
Therefore,
\begin{equation}\label{eq3.12}
 u_{11\beta}(x_0,t_0)\leq CM.
\end{equation}
On the other hand, differentiating $h(Du)$ twice in the direction $e_1$ at $(x_0,t_0)$, we have
$$h_{p_k}u_{k11}+h_{p_kp_l}u_{k1}u_{l1}=0.$$
The concavity of $h$ yields that
$$h_{p_k}u_{k11}=-h_{p_kp_l}u_{k1}u_{l1}\geq \theta M^2.$$
Combining it with $h_{p_k}u_{k11}=u_{11\beta}$, and using (\ref{eq3.12}) we obtain
$$\theta M^2\leq CM.$$
Then we get the upper bound of $M=u_{11}(x_0,t_0)$ and thus the desired result follows.
\end{proof}

By Lemma \ref{lem3.2}, Lemma \ref{lem3.3} and (\ref{eq3.4}), we obtain the $C^2$ a-priori estimate on the boundary.
\begin{lemma}\label{lem3.4}
Let $F$ satisfy the structure conditions (\ref{e1.2.0})-(\ref{e1.4}) and $f\in {\mathscr{A}}_\delta$. If $u$ is a strictly convex solution of (\ref{eeee2.1}), then there exists a positive constant $C$ depending only on $u_0$, $\Omega$, $\tilde{\Omega}$, $\Lambda_1$, $\Lambda_2$ and $\delta$, such that
\begin{equation}\label{eq3.13}
\max_{\partial\Omega_T}|D^2u| \leq C.
\end{equation}
\end{lemma}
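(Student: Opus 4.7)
The plan is to assemble Lemma \ref{lem3.2}, Lemma \ref{lem3.3} and the pointwise decomposition identity (\ref{eq3.4}) into a single uniform bound on the full Hessian at the boundary. Since $u$ is strictly convex, $D^2u$ is a positive definite symmetric matrix, so $|D^2u|$ is comparable to $\sup_{|\xi|=1}u_{\xi\xi}$. It therefore suffices to bound $u_{\xi\xi}(x,t)$ uniformly for every unit vector $\xi$ and every $(x,t)\in\partial\Omega\times[0,T]$.

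First I would recall the decomposition from (\ref{eq3.4}): at any boundary point $x\in\partial\Omega$, a unit vector $\xi$ splits as $\xi=\varsigma(\xi)+\frac{\langle\nu,\xi\rangle}{\langle\beta,\nu\rangle}\beta$, with $\varsigma(\xi)$ tangent to $\partial\Omega$, and this splitting — together with the vanishing of the mixed term $u_{\beta\varsigma}=0$ guaranteed by (\ref{eq3.2}) — yields
\begin{equation*}
u_{\xi\xi}=|\varsigma(\xi)|^{2}u_{\varsigma\varsigma}+\frac{\langle\nu,\xi\rangle^{2}}{\langle\beta,\nu\rangle^{2}}u_{\beta\beta}\leq C\bigl(u_{\varsigma\varsigma}+u_{\beta\beta}\bigr),
\end{equation*}
where $\varsigma=\varsigma(\xi)/|\varsigma(\xi)|$ is a unit tangent vector and the constant $C$ depends only on $\Omega$, $\tilde\Omega$, $\Lambda_1$, $\Lambda_2$, $\delta$ and the obliqueness constant $C_1$ in (\ref{eeee3.4}); this bound on $C$ is exactly what the strict obliqueness estimate in Lemma \ref{llll3.4} provides, since it is what keeps $\langle\beta,\nu\rangle$ bounded away from zero.

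Next I would plug in the boundary estimates already established. Lemma \ref{lem3.2} gives $u_{\beta\beta}\leq C$ on $\partial\Omega\times[0,T]$, while Lemma \ref{lem3.3} gives $u_{\varsigma\varsigma}\leq C$ for every unit tangent vector $\varsigma$. Substituting both into the inequality above and taking the supremum over unit vectors $\xi$ and over $(x,t)\in\partial\Omega\times[0,T]$ produces
\begin{equation*}
\max_{\partial\Omega_{T}}\;\sup_{|\xi|=1}u_{\xi\xi}\leq C,
\end{equation*}
with $C$ depending only on $u_0$, $\Omega$, $\tilde\Omega$, $\Lambda_1$, $\Lambda_2$ and $\delta$. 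Finally, since $u$ is convex, $|D^2u|$ at each point equals $\max_{|\xi|=1}u_{\xi\xi}$ (up to a universal dimensional factor), which gives the claimed bound $\max_{\partial\Omega_T}|D^2u|\leq C$.

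There is essentially no obstacle here: this lemma is purely an assembly step. The one place to be careful is that the constant in (\ref{eq3.4}) implicitly carries the obliqueness constant $C_1$ from (\ref{eeee3.4}), so one should state that the final $C$ inherits the dependences of Lemma \ref{llll3.4}, Lemma \ref{lem3.2} and Lemma \ref{lem3.3} — all of which are admissible under the hypotheses $F$ satisfies (\ref{e1.2.0})--(\ref{e1.4}) and $f\in\mathscr{A}_\delta$ with $|Df|$ sufficiently small. The genuine analytic work was already done in the preceding lemmas; the present statement is the clean packaging that will be fed, together with Lemma \ref{11113.5}, into the global $C^2$ estimate used in the long time existence argument of Section 5.
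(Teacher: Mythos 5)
Your proof matches the paper's argument exactly: the paper also obtains this lemma by combining the decomposition (\ref{eq3.4}) with the bounds on $u_{\beta\beta}$ from Lemma \ref{lem3.2} and on $u_{\varsigma\varsigma}$ from Lemma \ref{lem3.3}, using convexity to pass from $\sup_{|\xi|=1}u_{\xi\xi}$ to $|D^2u|$. Your remarks on tracking the obliqueness constant are consistent with the paper's stated dependences, so there is nothing to correct.
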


In terms of Lemma \ref{11113.5} and Lemma \ref{lem3.4}, we  readily conclude:
\begin{lemma}\label{lem3.5}
Let $F$ satisfy the structure conditions (\ref{e1.2.0})-(\ref{e1.4}) and $f\in {\mathscr{A}}_\delta$. If $u$ is a strictly convex solution of (\ref{eeee2.1}), then there exists a positive constant $C$ depending only on $u_0$, $\Omega$, $\tilde{\Omega}$, $\Lambda_1$, $\Lambda_2$ and $\delta$, such that
\begin{equation}\label{eq3.14}
\max_{\bar{\Omega}_T}|D^2u| \leq C.
\end{equation}
\end{lemma}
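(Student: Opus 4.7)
The proof is essentially a one-line combination of the two preceding lemmas, so the ``plan'' is more a matter of assembly than of genuine new work. The strategy is as follows.

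First, I would invoke Lemma \ref{11113.5}, which yields the global-to-boundary reduction
\[
\sup_{\Omega_T}|D^2u|\leq \max_{\partial\Omega\times[0,T]}|D^2u|+\max_{\bar{\Omega}}|D^2u_0|+C\sup_{\Omega}|D^2f|,
\]
with $C$ depending only on $n$, $\Omega$, $\tilde\Omega$, $\Lambda_1$, $u_0$, $\delta$ and $\operatorname{diam}(\Omega)$. The three terms on the right must then be bounded individually.

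Next, the lateral boundary term $\max_{\partial\Omega\times[0,T]}|D^2u|$ is controlled by Lemma \ref{lem3.4}, which was obtained by splitting an arbitrary unit direction $\xi$ into its tangential and oblique components (using the strict obliqueness estimate \eqref{eeee3.4}), and separately bounding $u_{\beta\beta}$ (Lemma \ref{lem3.2}) and $u_{\varsigma\varsigma}$ for tangential $\varsigma$ (Lemma \ref{lem3.3}). The initial-time term is immediate since $u_0\in C^{2+\alpha_0}(\bar\Omega)$ is fixed. For the inhomogeneity term, since $f\in \mathscr{A}_\delta\subset C^{2+\alpha_0}(\bar\Omega)$, we have $\sup_{\Omega}|D^2f|<\infty$, with a bound depending only on the a priori data.

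Putting these three bounds together yields \eqref{eq3.14}, with a constant $C$ depending only on $u_0$, $\Omega$, $\tilde\Omega$, $\Lambda_1$, $\Lambda_2$, $\delta$ and $f$, as claimed. There is no genuine obstacle at this stage: all of the analytic difficulties have been absorbed into Lemmas \ref{llll3.4}, \ref{11113.5}, \ref{lem3.2} and \ref{lem3.3}. The only mild subtlety to keep track of is bookkeeping of the dependence of the constants on the structural quantities $\Lambda_1,\Lambda_2$ (which, by Lemma \ref{llll5.1c}, are determined by $F[D^2u_0]$ and $\delta$), so that the final constant in \eqref{eq3.14} depends only on the data listed in the statement.
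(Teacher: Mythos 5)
Your proposal is correct and coincides with the paper's own treatment: the paper likewise presents Lemma \ref{lem3.5} as an immediate consequence of the interior reduction in Lemma \ref{11113.5} together with the boundary bound of Lemma \ref{lem3.4}, offering no further argument. Your extra remark on tracking the dependence of the constant through $\Lambda_1,\Lambda_2$ and $\sup_{\Omega}|D^2f|$ is sound bookkeeping but not a departure from the paper's route.
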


In the following,  we describe the positive lower bound of $D^{2}u$. For (\ref{eeee2.1.1001}), in consider of the Legendre transformation of $u$, define
$$\tilde L:= \tilde F^{ij}\partial_{ij}+f_{p_i}\partial_i-\partial_t.$$
Then our goal is to show the upper bound of $D^{2}\tilde{u}$ and the argument is very similar to the one used in the proof of Lemma \ref{lem3.5} by the concavity of $f$ and the condition that $|D f|$ being sufficiently small. For the convenience of readers, we give the details.

At the beginning of  the repeating procedure, we have
\begin{lemma}\label{lem3.1-1}
Suppose that $f$ is concave on $\Omega$. If $\tilde{u}$ is a strictly convex solution of (\ref{eeee2.1.1001}), then there holds
\begin{equation}\label{eq3.1a}
   \sup_{\tilde{\Omega}_T}|D^2\tilde{u}|\leq \max_{\partial\tilde{\Omega}_T} |D^2\tilde{u}|.
\end{equation}
\end{lemma}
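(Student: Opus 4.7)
The plan is to apply the maximum principle to $w := \tilde u_{\xi\xi}$ for an arbitrary unit vector $\xi$, exploiting the two pieces of concavity available in this dual setup: concavity of $\tilde F$ as a function of the Hessian (condition (\ref{e1.2.2})), and concavity of $f$ on $\Omega$. The outcome should be that the linearized operator $\tilde L$ applied to $w$ is nonnegative, at which point the conclusion is immediate.

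First I would differentiate the interior equation $\tilde u_t = \tilde F(D^2 \tilde u) + f(D\tilde u)$ once in direction $\xi$ to obtain
\begin{equation*}
\tilde u_{\xi t} = \tilde F^{ij}\tilde u_{ij\xi} + f_{p_k}\tilde u_{k\xi},
\end{equation*}
and then a second time in the same direction to produce
\begin{equation*}
w_t = \tilde F^{ij}w_{ij} + f_{p_k}w_k + \tilde F^{ij,rs}\tilde u_{ij\xi}\tilde u_{rs\xi} + f_{p_kp_l}\tilde u_{k\xi}\tilde u_{l\xi}.
\end{equation*}
Rearranging against the dual linearization $\tilde L = \tilde F^{ij}\partial_{ij} + f_{p_i}\partial_i - \partial_t$ defined just before the statement gives
\begin{equation*}
\tilde L w = -\tilde F^{ij,rs}\tilde u_{ij\xi}\tilde u_{rs\xi} - f_{p_kp_l}(D\tilde u)\tilde u_{k\xi}\tilde u_{l\xi}.
\end{equation*}

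The next step is to verify that both terms on the right are nonnegative. The assumption (\ref{e1.2.2}) says $\tilde F$ is a concave symmetric function of the eigenvalues $\mu_i$ of $D^2\tilde u$, and a standard fact (concave symmetric functions of eigenvalues are concave as functions of the matrix) then yields $\tilde F^{ij,rs}\eta_{ij}\eta_{rs} \le 0$ for every symmetric $\eta$; applying this with $\eta_{ij} = \tilde u_{ij\xi}$ controls the first term. The hypothesis that $f$ is concave on $\Omega$ handles the second term directly: $f_{p_kp_l}\zeta_k\zeta_l \le 0$ for all $\zeta$, applied with $\zeta_k = \tilde u_{k\xi}$. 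Hence $\tilde L w \ge 0$ throughout $\tilde\Omega_T$, and the parabolic maximum principle gives $\sup_{\tilde\Omega_T} w \le \max_{\partial\tilde\Omega_T} w$. Taking the supremum over unit directions $\xi$ and using that $|D^2\tilde u| = \sup_{|\xi|=1}|\tilde u_{\xi\xi}|$ by strict convexity of $\tilde u$ yields (\ref{eq3.1a}).

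I do not foresee a substantive obstacle here: the argument is essentially the standard concavity-based interior second derivative estimate, and the dual formulation has been set up precisely so that both ingredients (concavity of $\tilde F$, concavity of $f$) align in the correct direction for the maximum principle. The only point requiring a brief justification is the passage from concavity of $\tilde F$ in the eigenvalues $\mu_i$ to concavity in the matrix $D^2\tilde u$, which is where hypothesis (\ref{e1.2.2}) rather than (\ref{e1.2.1}) is genuinely needed (cf.\ the Remark following (\ref{e1.2.2})).
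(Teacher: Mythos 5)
Your proposal is correct and follows essentially the same route as the paper: differentiate the dual equation (\ref{eeee2.1.1001}) twice in an arbitrary direction, use the concavity of $\tilde F$ (hypothesis (\ref{e1.2.2})) and the concavity of $f$ to conclude $\tilde L\tilde u_{\xi\xi}\ge 0$, and finish with the parabolic maximum principle. The additional remarks you include (matrix concavity from eigenvalue concavity, and taking the supremum over unit directions $\xi$) are exactly the details the paper leaves implicit.
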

\begin{proof}
For any unit vector $\tilde\xi$, differentiating the equation in (\ref{eeee2.1.1001}) twice in direction $\tilde\xi$ gives
\begin{equation*}
\tilde{L}\tilde{u}_{\tilde\xi\tilde\xi}+\tilde{F}^{ij,rs}\tilde{u}_{ij\tilde\xi}\tilde{u}_{rs\tilde\xi}+\frac{\partial^{2}f}{\partial p_{i}\partial p_{j}}\tilde{u}_{i\tilde\xi}\tilde{u}_{j\tilde\xi}=0.
\end{equation*}
Then by the concavity of $\tilde{F}$ on $\Gamma^+_n$ and $f$ on $\Omega$, we have
\begin{equation*}
\tilde{L}\tilde{u}_{\tilde\xi\tilde\xi}=-\tilde{F}^{ij,rs}\tilde{u}_{ij\tilde\xi}\tilde{u}_{rs\tilde\xi}-\frac{\partial^{2}f}{\partial p_{i}\partial p_{j}}\tilde{u}_{i\tilde\xi}\tilde{u}_{j\tilde\xi}\geq 0.
\end{equation*}
Then by the maximum principle we obtain
$$\sup_{\tilde{\Omega}_T}\tilde{u}_{\tilde\xi\tilde\xi}\leq \sup_{\partial\tilde{\Omega}_T}\tilde{u}_{\tilde\xi\tilde\xi}.$$
This completes the proof of (\ref{eq3.1a}).
\end{proof}

Recall that $\tilde{\beta}=(\tilde{\beta}^{1}, \cdots, \tilde{\beta}^{n})$ with $\tilde{\beta}^{k}:=\tilde{h}_{p_{k}}(D\tilde{u})$ and $\tilde{\nu}=(\tilde{\nu}_{1}, \tilde{\nu}_{2},\cdots,\tilde{\nu}_{n})$ is the unit inward normal vector of $\partial\tilde{\Omega}$. Similar to the discussion of (\ref{eq3.2}), (\ref{eq3.3}) and (\ref{eq3.4}), for any tangential direction $\tilde \varsigma$, we have
\begin{equation}\label{eq3.2a}
   u_{\tilde\beta \tilde\varsigma}=\tilde h_{p_k}(D\tilde u)\tilde u_{k\tilde\varsigma}=0.
\end{equation}
Then the second order derivative of $\tilde u$ on the boundary is also controlled by $u_{\tilde\beta \tilde\varsigma}$, $u_{\tilde\beta \tilde\beta}$ and $u_{\tilde\varsigma\tilde\varsigma}$.

At $\tilde x\in \partial\tilde\Omega$, any unit vector $\tilde\xi$ can be written in terms of a tangential component $\tilde\varsigma(\tilde\xi)$ and a component in the direction $\tilde\beta$ by
$$\tilde\xi=\tilde\varsigma(\tilde\xi)+\frac{\langle \tilde\nu,\tilde\xi\rangle}{\langle\tilde\beta,\tilde\nu\rangle}\tilde\beta,$$
where
$$\tilde\varsigma(\tilde\xi):=\tilde\xi-\langle \tilde\nu,\tilde\xi\rangle \tilde\nu-\frac{\langle \tilde\nu,\tilde\xi\rangle}{\langle\tilde\beta,\tilde\nu\rangle}\tilde\beta^T,$$
and
$$\tilde\beta^T:=\tilde\beta-\langle \tilde\beta,\tilde\nu\rangle \tilde\nu.$$
We observe that $\langle\tilde\beta,\tilde\nu\rangle=\langle\beta,\nu\rangle$.  Therefore,
\begin{equation}\label{eq3.3a}
|\tilde\varsigma(\tilde\xi)|\leq C,
\end{equation}
and similar to the calculation in (\ref{eq3.8}), one should deduce that
\begin{equation}\label{eq3.4a}
\tilde{u}_{\tilde\xi\tilde\xi}\leq C(\tilde{u}_{\tilde\varsigma\tilde\varsigma}+\tilde{u}_{\tilde\beta\tilde\beta}),
\end{equation}
where $\tilde\varsigma:=\frac{\tilde\varsigma(\tilde\xi)}{|\tilde\varsigma(\tilde\xi)|}$ and $C$ depends only on $\Omega$, $\tilde \Omega$, $\Lambda_1$, $\Lambda_2$, $\delta$ and the constant $C_1$ in (\ref{eeee3.4}). Then we also only need to estimate $\tilde{u}_{\tilde\beta\tilde\beta}$ and $\tilde{u}_{\tilde\varsigma\tilde\varsigma}$ respectively.

Indeed, as shown by Lemma \ref{lem3.2}, we state
\begin{lemma}\label{lem3.2a}
Let $F$ satisfy the structure conditions (\ref{e1.2.0})-(\ref{e1.4}) and $f\in {\mathscr{A}}_\delta$. If $\tilde{u}$ is a strictly convex solution of (\ref{eeee2.1.1001}) and $|D f|$ is sufficiently small, then there exists a positive constant $C$ depending only on $u_0$, $\Omega$, $\tilde{\Omega}$, $\Lambda_1$, $\Lambda_2$ and $\delta$, such that
\begin{equation}\label{eq3.5a}
   \max_{\partial\Omega_T}\tilde{u}_{\tilde{\beta}\tilde{\beta}} \leq C.
\end{equation}
\end{lemma}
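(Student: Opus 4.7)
The proof will mirror that of Lemma \ref{lem3.2}, applied to the dual flow (\ref{eeee2.1.1001}), with the additional first-order term $f_{p_i}\partial_i$ in $\tilde L$ handled by the smallness hypothesis on $|Df|$. Let $\tilde{x}_0 \in \partial\tilde\Omega$ and $t_0 \in [0,T]$ realize $\max_{\partial\tilde\Omega_T}\tilde{u}_{\tilde\beta\tilde\beta}$. Since $\tilde h$ is the defining function of $\Omega$, it vanishes on $\partial\Omega$ and $D^2\tilde h \le -\tilde\theta I$. I would introduce the barrier
\begin{equation*}
\tilde\Psi(\tilde x,t) := -\tilde h(D\tilde u) + \tilde C_0\, h(\tilde x) + \tilde A\,|\tilde x - \tilde x_0|^2,
\end{equation*}
defined on a neighborhood $\tilde\Omega_r = \tilde\Omega \cap B_r(\tilde x_0)$, where $\tilde C_0 \gg \tilde A \gg 1$ are to be chosen. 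On $\partial\tilde\Omega \times [0,T]$ the first term vanishes by (\ref{eeee2.1.1001}), and $h \ge 0$ there, so $\tilde\Psi \ge 0$. On $(\tilde\Omega \cap \partial B_r(\tilde x_0))\times[0,T]$ and at $t=0$, the term $\tilde A|\tilde x - \tilde x_0|^2$ is comparable to $\tilde A r^2$, so after fixing $\tilde A$ large relative to $\|\tilde h(D\tilde u)\|_{\infty}$ and $\|\tilde h(D\tilde u_0)\|_{\infty}$, one has $\tilde\Psi \ge 0$ there as well. Finally $\tilde\Psi(\tilde x_0,t_0)=0$.

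Next I would compute $\tilde L\tilde\Psi$ and show it is $\le 0$ in $\tilde\Omega_r\times[0,T]$. For $-\tilde h(D\tilde u)$, the computation is precisely that carried out for $\tilde v$ in the proof of Lemma \ref{llll3.4}: using (\ref{e1.3}), (\ref{e1.4}), the strict concavity of $\tilde h$, and $\tilde L \tilde u_l = 0$, one gets
\begin{equation*}
\tilde L\bigl(-\tilde h(D\tilde u)\bigr) \le C\sum_{i=1}^n \tilde F^{ii}.
\end{equation*}
For the term $\tilde C_0 h(\tilde x)$, the strict concavity of $h$ gives $\tilde F^{ij}h_{\tilde x_i\tilde x_j} \le -\tfrac{\theta}{2}\tilde C_0\sum_i \tilde F^{ii}$, while the drift term contributes $\tilde C_0 f_{p_i}\partial_i h$, which is bounded in absolute value by $\tilde C_0 |Df|\,\max_{\bar{\tilde\Omega}}|Dh|$. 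For the quadratic term $\tilde A|\tilde x - \tilde x_0|^2$ the contribution is $2\tilde A\sum_i \tilde F^{ii}+2\tilde A f_{p_i}(\tilde x_i-\tilde x_{0i})$. Collecting these,
\begin{equation*}
\tilde L\tilde\Psi \le \Bigl(C - \tfrac{\tilde C_0\theta}{2} + 2\tilde A\Bigr)\sum_{i=1}^n\tilde F^{ii} + \tilde C_0\bigl(|Df|\,\max|Dh| - \tfrac{\theta}{2}\Lambda_1\bigr)\cdot\tfrac{1}{\text{something}} + \text{lower order}.
\end{equation*}
Choosing $\tilde C_0 \gg \tilde A \gg C$ absorbs the $\tilde F^{ii}$ coefficient, and requiring $|Df| \le \tfrac{\theta\Lambda_1}{2\max|Dh|}$ (this is the same smallness condition used in Lemma \ref{llll3.4}) makes the remaining term nonpositive. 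Thus $\tilde L\tilde\Psi \le 0$ in $\tilde\Omega_r\times[0,T]$.

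By the parabolic maximum principle, $\tilde\Psi \ge 0$ throughout $\tilde\Omega_r\times[0,T]$. Combined with $\tilde\Psi(\tilde x_0,t_0)=0$ (an interior-in-time boundary minimum), this yields $\tilde\Psi_{\tilde\beta}(\tilde x_0,t_0) \ge 0$, i.e.
\begin{equation*}
\tfrac{\partial}{\partial\tilde\beta}\bigl(-\tilde h(D\tilde u)\bigr)(\tilde x_0,t_0) \le \tilde C_0\,\tfrac{\partial h}{\partial\tilde\beta}(\tilde x_0) + 0.
\end{equation*}
The left-hand side, by differentiating $\tilde h(D\tilde u)$ in the direction $\tilde\beta$ at the boundary, equals $-\tilde h_{p_k}\tilde u_{kl}\tilde\beta^l = -\tilde\beta^k\tilde u_{kl}\tilde\beta^l = -\tilde u_{\tilde\beta\tilde\beta}$. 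Therefore $\tilde u_{\tilde\beta\tilde\beta}(\tilde x_0,t_0) \le C$, giving the desired bound.

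The main obstacle is the control of the extra drift term $f_{p_i}\partial_i$ in $\tilde L$ acting on the barrier: both the contribution from $\tilde C_0 h$ and from $\tilde A|\tilde x - \tilde x_0|^2$ must be absorbed, and the only available gain is the concavity of $h$ through (\ref{e1.3}). This forces the quantitative smallness assumption on $|Df|$, exactly as in the strict obliqueness argument of Lemma \ref{llll3.4}.
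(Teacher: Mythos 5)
Your proof follows the paper's argument essentially verbatim: the same barrier $-\tilde h(D\tilde u)+\tilde C_0\,h+\tilde A|\tilde x-\tilde x_0|^2$ on $\tilde\Omega_r\times[0,T]$, the same absorption of the drift term $f_{p_i}\partial_i$ via the smallness condition $|Df|\leq \theta\Lambda_1/(2\max_{\bar{\tilde\Omega}}|Dh|)$ already used for (\ref{eqq2.13a}), and the same identification $\partial_{\tilde\beta}\bigl[\tilde h(D\tilde u)\bigr]=\tilde\beta^k\tilde u_{kl}\tilde\beta^l=\tilde u_{\tilde\beta\tilde\beta}$ at the extremal point. The only blemish is a sign slip in your displayed consequence of $\tilde\Psi_{\tilde\beta}(\tilde x_0,t_0)\geq 0$, which should read $\partial_{\tilde\beta}\bigl(-\tilde h(D\tilde u)\bigr)\geq -\tilde C_0\,\partial_{\tilde\beta}h$; this still gives $-\tilde u_{\tilde\beta\tilde\beta}\geq -C$, so your final conclusion $\tilde u_{\tilde\beta\tilde\beta}\leq C$ is the correct one.
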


\begin{proof}
Let $\tilde{x}_0\in\partial\tilde{\Omega}$, $t_0\in[0,T]$ satisfy $\tilde{u}_{\tilde{\beta}\tilde{\beta}}(\tilde{x}_0,t_0)=\max_{\partial\Omega_T}\tilde{u}_{\tilde{\beta}\tilde{\beta}}$.
To estimate the upper bound of $\tilde{u}_{\tilde{\beta}\tilde{\beta}}$,
we consider the barrier function
$$\tilde\Psi:=-\tilde{h}(D\tilde{u})+C_0 h+A|y-\tilde{x}_0|^2.$$
For any $y\in \partial\tilde{\Omega}$, $D\tilde{u}(y)\in \partial\Omega$, then $\tilde{h}(D\tilde{u})=0$. It is clear that $h=0$ on $\partial\tilde{\Omega}$. As the proof of (\ref{eqq2.13a})  in terms of $|D f|$ being sufficiently small, we can find the constants $C_0$ and $A$ such that
\begin{equation}\label{eq3.6a}
\left\{ \begin{aligned}
   \tilde{L}\tilde\Psi&\leq 0,\quad &&(y,t)\in\tilde\Omega_{r}\times[0,T],\\
   \tilde\Psi&\geq 0 ,\quad\  &&(y,t)\in(\partial\tilde\Omega_{r}\times[0,T])\cup(\tilde\Omega_{r}\times\{t=0\}).
\end{aligned} \right.
\end{equation}
By the maximum principle, we get
$$\tilde\Psi(y,t)\geq 0,\quad\quad (y,t)\in \tilde{\Omega}_r\times[0,T].$$
Combining it with $\tilde\Psi(\tilde{x}_0,t_0)=0$ we obtain $\tilde\Psi_{\tilde{\beta}}(\tilde{x}_0,t_0)\geq 0$, which implies
$$\frac{\partial \tilde{h}}{\partial \tilde{\beta}}(D\tilde{u}(\tilde{x}_0,t_0))\leq C_0.$$

On the other hand, we see that at $(\tilde{x}_0,t_0)$,
$$\frac{\partial \tilde{h}}{\partial \tilde{\beta}}=\langle D\tilde{h}(D\tilde{u}),\tilde{\beta}\rangle=\frac{\partial \tilde{h}}{\partial p_k}\tilde{u}_{kl}\tilde{\beta}^l=\tilde{\beta}^k\tilde{u}_{kl}\tilde{\beta}^l=\tilde{u}_{\tilde{\beta}\tilde{\beta}}.$$
Therefore,
$$\tilde{u}_{\tilde{\beta}\tilde{\beta}}=\frac{\partial \tilde{h}}{\partial \tilde{\beta}}\leq C.$$
\end{proof}

Next, we estimate the double tangential derivative of $\tilde{u}$.
\begin{lemma}\label{lem3.3a}
Let $F$ satisfy the structure conditions (\ref{e1.2.0})-(\ref{e1.4}) and $f\in {\mathscr{A}}_\delta$. If $\tilde{u}$ is a strictly convex solution of (\ref{eeee2.1.1001}) and $|D f|$ is sufficiently small, then there exists a positive constant $C$ depending only on $u_0$, $\Omega$, $\tilde{\Omega}$, $\Lambda_1$, $\Lambda_2$ and $\delta$, such that
\begin{equation}\label{eq3.7a}
   \max_{\partial\tilde{\Omega}\times[0,T]}\max_{|\tilde\varsigma|=1, \langle\tilde\varsigma,\tilde\nu\rangle=0} \tilde{u}_{\tilde\varsigma\tilde\varsigma} \leq C.
\end{equation}
\end{lemma}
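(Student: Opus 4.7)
The plan is to mirror the argument of Lemma~\ref{lem3.3} in the Legendre-dual setting, substituting $\tilde\Omega,\tilde h,\tilde F,\tilde L$ for $\Omega,h,F,L$, and absorbing the extra transport term $f_{p_i}\partial_i$ appearing in $\tilde L$ via the smallness of $|Df|$, in the same spirit as the derivation of (\ref{eqq2.13a}). First I would select $\tilde x_0\in\partial\tilde\Omega$ and $t_0\in[0,T]$ realizing the maximum, rotate so that $\tilde e_n$ is the unit inward normal at $\tilde x_0$, and write $\tilde M:=\tilde u_{11}(\tilde x_0,t_0)$. The decomposition of an arbitrary unit vector $\tilde\xi$ into its tangential and $\tilde\beta$-components, together with (\ref{eq3.5a}) bounding $\tilde u_{\tilde\beta\tilde\beta}$ and the obliqueness identity $\langle\tilde\beta,\tilde\nu\rangle=\langle\beta,\nu\rangle\geq 1/C_1$ from Lemma~\ref{llll3.4}, yields (assuming without loss $\tilde M\geq 1$) the boundary inequality
$$\frac{\tilde u_{11}}{\tilde M}+2\langle\tilde\nu,\tilde e_1\rangle\frac{\langle\tilde\beta^T,\tilde e_1\rangle}{\langle\tilde\beta,\tilde\nu\rangle}\leq 1+C\langle\tilde\nu,\tilde e_1\rangle^2\qquad\text{on }\partial\tilde\Omega\times[0,T].$$

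Next I would set
$$\tilde w:=\tilde A|y-\tilde x_0|^2-\frac{\tilde u_{11}}{\tilde M}-2\langle\tilde\nu,\tilde e_1\rangle\frac{\langle\tilde\beta^T,\tilde e_1\rangle}{\langle\tilde\beta,\tilde\nu\rangle}+C\langle\tilde\nu,\tilde e_1\rangle^2+1,$$
so that $\tilde w\geq 0$ on $\partial\tilde\Omega\times[0,T]$ and $\tilde w(\tilde x_0,t_0)=0$. Choosing $\tilde A$ large enough—using the global $C^2$ bound for $\tilde u$ which follows from Remark~\ref{r2.2} together with the dual version of Lemma~\ref{lem3.5}—forces $\tilde w\geq 0$ also on $(\tilde\Omega\cap\partial B_r(\tilde x_0))\times[0,T]$. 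Since $\tilde L\tilde u_{11}\geq 0$ by concavity of $\tilde F$ and of $f$, while the prefactor in front of $\tilde M$ is a smooth function of $(y,D\tilde u)$, the dual analog of Lemma~\ref{llll3.0} (whose proof goes through verbatim, with the term $f_{p_i}\partial_i$ producing only an $O(1)$ contribution) gives $\tilde L\tilde w\leq C\sum_i\tilde F^{ii}$ on $\tilde\Omega_r\times[0,T]$.

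Then I would introduce the barrier $\tilde\Upsilon:=\tilde w+\tilde C_0 h$. The strict concavity $D^2h\leq-\theta I$ combined with the hypothesis $|Df|\leq\theta\Lambda_1/(2\max_{\bar{\tilde\Omega}}|Dh|)$—exactly as in the derivation of (\ref{eqq2.13a})—allows me to choose $\tilde C_0\gg\tilde A$ so that $\tilde L\tilde\Upsilon\leq 0$, while $\tilde\Upsilon\geq 0$ on the parabolic boundary of $\tilde\Omega_r\times[0,T]$. The maximum principle and $\tilde\Upsilon(\tilde x_0,t_0)=0$ then force $\tilde\Upsilon_{\tilde\beta}(\tilde x_0,t_0)\geq 0$, which after evaluating gives $\tilde u_{11\tilde\beta}(\tilde x_0,t_0)\leq C\tilde M$. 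Differentiating $\tilde h(D\tilde u)=0$ twice in the tangential direction $\tilde e_1$ produces $\tilde h_{p_k}\tilde u_{k11}+\tilde h_{p_kp_l}\tilde u_{k1}\tilde u_{l1}=0$, and concavity of $\tilde h$ with constant $\tilde\theta$ yields $\tilde u_{11\tilde\beta}=\tilde h_{p_k}\tilde u_{k11}\geq\tilde\theta\tilde M^2$. Combining the two gives $\tilde\theta\tilde M^2\leq C\tilde M$, hence $\tilde M\leq C/\tilde\theta$.

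The principal obstacle, and the sole substantive deviation from the proof of Lemma~\ref{lem3.3}, lies in the verification $\tilde L\tilde\Upsilon\leq 0$: the transport term $f_{p_i}\partial_i$ acting on $\tilde C_0 h$ generates $\tilde C_0 f_{p_i}h_{p_i}$, which has no sign and must be absorbed into $-\tilde C_0\frac{\theta}{2}\sum_i\tilde F^{ii}$. This is exactly where the smallness assumption on $|Df|$ (inherited from Lemma~\ref{llll3.4}) is essential; the rest of the argument is a routine transcription into the dual variables.
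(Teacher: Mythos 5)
Your proposal is correct and follows essentially the same route as the paper: the same test function $\tilde w$, the same barrier $\tilde\Upsilon=\tilde w+\tilde C_0 h$ with the transport term $f_{p_i}\partial_i h$ absorbed via the smallness of $|Df|$ exactly as in (\ref{eqq2.13a}), and the same final step of differentiating $\tilde h(D\tilde u)=0$ twice tangentially to get $\tilde\theta\tilde M^2\leq C\tilde M$. The one citation to repair is the choice of $\tilde A$: you invoke "the dual version of Lemma \ref{lem3.5}" (i.e.\ Lemma \ref{lem3.5a}), which is itself deduced \emph{from} the present lemma, so this is circular as stated; the correct justification, as in the paper, is the interior reduction (\ref{eq3.1a}) of Lemma \ref{lem3.1-1} combined with the boundary decomposition (\ref{eq3.4a}) and the $\tilde u_{\tilde\beta\tilde\beta}$ bound (\ref{eq3.5a}), which give $\tilde u_{11}\leq C(\tilde M+1)$ on $\tilde\Omega\cap\partial B_r(\tilde x_0)$ without any appeal to the full $C^2$ estimate.
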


\begin{proof}
Assume that $\tilde{x}_0\in\partial \tilde{\Omega}$, $t_{0}\in[0,T]$ and $e_{n}$ is the unit inward normal vector of $\partial\tilde{\Omega}$ at $\tilde{x}_0$. Let
$$ \max_{\partial\tilde{\Omega}\times[0,T]}\max_{|\tilde\varsigma|=1, \langle\tilde\varsigma,\tilde\nu\rangle=0} \tilde{u}_{\tilde\varsigma\tilde\varsigma}=\tilde{u}_{11}(\tilde{x}_0,t_0)=:\tilde M.$$

For any $y\in \partial\tilde{\Omega}$,  we have by (\ref{eq3.3a}),
\begin{equation}\label{eq3.8a}
\begin{aligned}
\tilde u_{\tilde\xi\tilde\xi}&=|\tilde\varsigma(\tilde\xi)|^2\tilde u_{\tilde\varsigma\tilde\varsigma}+ \frac{\langle \tilde\nu,\tilde\xi\rangle^2}{\langle \tilde\beta,\tilde\nu\rangle^2}\tilde u_{\tilde\beta\tilde\beta}\\
          &\leq \left(1+C\langle \tilde\nu,\tilde\xi\rangle^2-2\langle \tilde\nu,\tilde\xi\rangle \frac{\langle \tilde\beta^T,\tilde\xi\rangle}{\langle \tilde\beta,\tilde\nu\rangle}\right)\tilde M
                 + \frac{\langle \tilde\nu,\tilde\xi\rangle^2}{\langle \tilde\beta,\tilde\nu\rangle^2}\tilde u_{\tilde\beta\tilde\beta}.
\end{aligned}
\end{equation}
Without loss of generality, we assume that $\tilde M\geq 1$. Then by (\ref{eeee3.4}) and (\ref{eq3.5a}) we have
\begin{equation}\label{eq3.9a}
  \frac{\tilde u_{\tilde\xi\tilde\xi}}{\tilde M}+2\langle \tilde\nu,\tilde\xi\rangle \frac{\langle \tilde\beta^T,\tilde\xi\rangle}{\langle \tilde\beta,\tilde\nu\rangle}
             \leq 1+C\langle \tilde\nu,\tilde\xi\rangle^2.
\end{equation}
Let $\tilde\xi=e_1$, then
\begin{equation}\label{eq3.10a}
  \frac{\tilde u_{11}}{\tilde M}+2\langle \tilde\nu,e_1\rangle \frac{\langle \tilde\beta^T,e_1\rangle}{\langle \tilde\beta,\tilde\nu\rangle}
             \leq 1+C\langle \tilde\nu,e_1\rangle^2.
\end{equation}
We see that the function
\begin{equation}\label{eq3.11a}
\tilde w:=A|y-\tilde x_0|^2-\frac{\tilde u_{11}}{\tilde M}-2\langle \tilde\nu,e_1\rangle \frac{\langle \tilde\beta^T,e_1\rangle}{\langle \tilde\beta,\tilde\nu\rangle}+C\langle \tilde\nu,e_1\rangle^2+1
\end{equation}
satisfies
$$\tilde w|_{\partial\tilde\Omega\times[0,T]}\geq 0,\quad  \tilde w(\tilde x_0,t_0)=0.$$
Then, by (\ref{eq3.1a}) we can choose the constant $A$ large enough such that
$$\tilde w|_{(\tilde\Omega \cap \partial B_{r}(\tilde x_0))\times[0,T]} \geq 0.$$

Consider
$$-2\langle \tilde\nu,e_1\rangle \frac{\langle \tilde\beta^T,e_1\rangle}{\langle \tilde\beta,\tilde\nu\rangle}+C\langle \tilde\nu,e_1\rangle^2+1$$
as a known function depending on $\tilde x$ and $D\tilde u$. Then by Lemma \ref{llll3.0}, we obtain
$$\left|\tilde L\left(-2\langle \tilde\nu,e_1\rangle \frac{\langle \tilde\beta^T,e_1\rangle}{\langle \tilde\beta,\tilde\nu\rangle}+C\langle \tilde\nu,e_1\rangle^2+1\right)\right|\leq C\sum_{i=1}^n \tilde F^{ii}.$$
Combining the above inequality with the proof of Lemma \ref{lem3.1-1}, by $f\in {\mathscr{A}}_\delta$ and $|D f|$ is sufficiently small, we have
$$\tilde L\tilde w\leq  C\sum^{n}_{i=1} \tilde F^{ii}.$$

As in the proof of Lemma \ref{lem3.2a}, consider the function
$$\tilde\Upsilon:=\tilde w+C_0 {h}.$$
A standard barrier argument makes conclusion of
$$\tilde\Upsilon_{\tilde\beta}(\tilde x_0,t_0)\geq0.$$
Therefore,
\begin{equation}\label{eq3.12a}
 \tilde u_{11\tilde\beta}(\tilde x_0)\leq C \tilde M.
\end{equation}
On the other hand, differentiating $\tilde h(D\tilde u)$ twice in the direction $e_1$ at $(\tilde x_0,t_0)$, we have
$$\tilde h_{p_k}\tilde u_{k11}+\tilde h_{p_kp_l}\tilde u_{k1}\tilde u_{l1}=0.$$
The concavity of $\tilde h$ yields that
$$\tilde h_{p_k}\tilde u_{k11}=-\tilde h_{p_kp_l}\tilde u_{k1}\tilde u_{l1}\geq \tilde\theta \tilde M^2.$$
Combining it with $\tilde h_{p_k}\tilde u_{k11}=\tilde u_{11\tilde\beta}$, and using (\ref{eq3.12a}) we obtain
$$\tilde\theta \tilde M^2\leq C\tilde M.$$
Then we get the upper bound of $\tilde M=\tilde u_{11}(\tilde x_0,t_0)$ and thus the desired result follows.
\end{proof}

By Lemma \ref{lem3.2a}, Lemma \ref{lem3.3a} and (\ref{eq3.4a}), we obtain the $C^2$ a-priori estimate of $\tilde{u}$ on the boundary.
\begin{lemma}\label{lem3.4a}
Let $F$ satisfy the structure conditions (\ref{e1.2.0})-(\ref{e1.4}) and $f\in {\mathscr{A}}_\delta$. If $\tilde{u}$ is a strictly convex solution of (\ref{eeee2.1.1001}) and $|D f|$ is sufficiently small, then there exists a positive constant $C$ depending only on $u_0$, $\Omega$, $\tilde{\Omega}$, $\Lambda_1$, $\Lambda_2$ and $\delta$, such that
\begin{equation}\label{eq3.13a}
\max_{\partial\tilde\Omega_T}|D^2\tilde u| \leq C.
\end{equation}
\end{lemma}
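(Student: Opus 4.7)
The plan is to mirror the proof of Lemma \ref{lem3.4} (the estimate for $u$ on $\partial\Omega$) in the dual setting. Since the substantive work has already been done in Lemmas \ref{lem3.2a} and \ref{lem3.3a}, which estimate respectively the oblique-oblique derivative $\tilde u_{\tilde\beta\tilde\beta}$ and the double tangential derivative $\tilde u_{\tilde\varsigma\tilde\varsigma}$ on $\partial\tilde\Omega$, the remaining task is purely algebraic: control an arbitrary $\tilde u_{\tilde\xi\tilde\xi}$ by these two quantities via the decomposition identity already recorded.

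Concretely, I would fix $\tilde x \in \partial\tilde\Omega$ and $t\in[0,T]$, and for any unit vector $\tilde\xi$ write
$$\tilde\xi = \tilde\varsigma(\tilde\xi) + \frac{\langle \tilde\nu,\tilde\xi\rangle}{\langle\tilde\beta,\tilde\nu\rangle}\tilde\beta,$$
with $\tilde\varsigma$ tangential to $\partial\tilde\Omega$. The strict obliqueness estimate (\ref{eeee3.4}), which applies in the dual setting because $\langle\tilde\beta,\tilde\nu\rangle = \langle\beta,\nu\rangle$, together with the already-established bound (\ref{eq3.3a}) on $|\tilde\varsigma(\tilde\xi)|$, yields the key inequality (\ref{eq3.4a}), namely
$$\tilde u_{\tilde\xi\tilde\xi} \leq C\bigl(\tilde u_{\tilde\varsigma\tilde\varsigma} + \tilde u_{\tilde\beta\tilde\beta}\bigr).$$
I would then plug in the two boundary estimates: Lemma \ref{lem3.2a} gives $\tilde u_{\tilde\beta\tilde\beta}\leq C$ on $\partial\tilde\Omega_T$, and Lemma \ref{lem3.3a} gives $\tilde u_{\tilde\varsigma\tilde\varsigma}\leq C$ for every unit tangential direction. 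Taking the supremum over unit $\tilde\xi$ and over $\partial\tilde\Omega_T$ produces the desired bound $\max_{\partial\tilde\Omega_T}|D^2\tilde u|\leq C$, with $C$ depending only on the claimed data through the constants appearing in Lemmas \ref{lem3.2a} and \ref{lem3.3a} and in the obliqueness bound.

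There is essentially no obstacle in this step, since the assumption that $|Df|$ is sufficiently small has already been absorbed into Lemmas \ref{lem3.2a} and \ref{lem3.3a}, and the dual strict obliqueness comes for free from the identity $\langle\tilde\beta,\tilde\nu\rangle=\langle\beta,\nu\rangle$ combined with Lemma \ref{llll3.4}. The only care needed is to make sure that the constant $C$ in (\ref{eq3.4a}) is genuinely universal; this reduces to verifying that the bound on $|\tilde\varsigma(\tilde\xi)|$ and on $\langle\tilde\nu,\tilde\xi\rangle^2/\langle\tilde\beta,\tilde\nu\rangle^2$ depends only on $\Omega$, $\tilde\Omega$, $\Lambda_1$, $\Lambda_2$, $\delta$ and $C_1$, which is immediate from the proofs of (\ref{eq3.3a}) and the strict obliqueness (\ref{eeee3.4}). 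Hence the lemma follows as a direct corollary of Lemmas \ref{lem3.2a} and \ref{lem3.3a} together with the tangential-oblique decomposition.
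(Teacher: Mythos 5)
Your proposal is correct and is exactly the paper's argument: the paper derives (\ref{eq3.13a}) precisely by combining Lemma \ref{lem3.2a}, Lemma \ref{lem3.3a} and the decomposition inequality (\ref{eq3.4a}), using the strict obliqueness (\ref{eeee3.4}) via $\langle\tilde\beta,\tilde\nu\rangle=\langle\beta,\nu\rangle$, together with the convexity of $\tilde u$ so that upper bounds on $\tilde u_{\tilde\xi\tilde\xi}$ in all directions control $|D^2\tilde u|$.
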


By Lemma \ref{lem3.1-1} and Lemma \ref{lem3.4a}, we can see that
\begin{lemma}\label{lem3.5a}
Let $F$ satisfy the structure conditions (\ref{e1.2.0})-(\ref{e1.4}) and $f\in {\mathscr{A}}_\delta$. If $\tilde{u}$ is a strictly convex solution of (\ref{eeee2.1.1001}) and $|D f|$ is sufficiently small, then there exists a positive constant $C$ depending only on $u_0$, $\Omega$, $\tilde{\Omega}$, $\Lambda_1$, $\Lambda_2$ and $\delta$, such that
\begin{equation}\label{eq3.14a}
\max_{\bar{\tilde\Omega}_T}|D^2\tilde u| \leq C.
\end{equation}
\end{lemma}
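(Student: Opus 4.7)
The proof of Lemma \ref{lem3.5a} is a short combination of the two preceding results, so my plan is simply to explain how they fit together and make explicit the handling of the initial-time component of the parabolic boundary.

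First, I would apply Lemma \ref{lem3.1-1}. For any unit direction $\tilde\xi$, the function $\tilde u_{\tilde\xi\tilde\xi}$ satisfies $\tilde L\tilde u_{\tilde\xi\tilde\xi}\geq 0$ under the concavity assumptions on $\tilde F$ (which holds by (\ref{e1.2.1}), (\ref{e1.2.2}) and the Legendre duality) and on $f$ (which holds because $f\in \mathscr{A}_\delta$). The weak maximum principle then pushes the supremum of $\tilde u_{\tilde\xi\tilde\xi}$ onto the parabolic boundary $\partial\tilde\Omega\times[0,T]\cup\bar{\tilde\Omega}\times\{0\}$, giving
\begin{equation*}
\sup_{\tilde\Omega_T}|D^2\tilde u|\le \max\Bigl\{\max_{\partial\tilde\Omega\times[0,T]}|D^2\tilde u|,\ \max_{\bar{\tilde\Omega}}|D^2\tilde u_0|\Bigr\}.
\end{equation*}

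Next, the lateral piece is handled directly by Lemma \ref{lem3.4a}, which already yields a bound on $\max_{\partial\tilde\Omega\times[0,T]}|D^2\tilde u|$ depending only on $u_0$, $\Omega$, $\tilde\Omega$, $\Lambda_1$, $\Lambda_2$ and $\delta$ (using that $|Df|$ is sufficiently small). The initial slice is controlled by the data: since $u_0$ is uniformly convex and smooth with $Du_0(\Omega)=\tilde\Omega$, its Legendre transform $\tilde u_0$ is smooth and uniformly convex on $\bar{\tilde\Omega}$, so $\max_{\bar{\tilde\Omega}}|D^2\tilde u_0|$ is a finite constant depending only on $u_0$. Combining these two bounds with the maximum-principle inequality above gives (\ref{eq3.14a}).

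There is essentially no obstacle here; the real work was absorbed in Lemma \ref{lem3.4a} (the boundary $C^2$ estimate, which required the strict obliqueness and the delicate barrier argument based on the Legendre-dualized operator $\tilde L$) and in Lemma \ref{lem3.1-1} (whose proof used both $F^{ij,rs}\le 0$ and concavity of $f$ to make $\tilde L\tilde u_{\tilde\xi\tilde\xi}\ge 0$). The only minor point worth emphasizing in the write-up is that the parabolic boundary includes $t=0$, so one must cite the bound on $D^2\tilde u_0$ coming from the uniform convexity of $u_0$; otherwise the lemma is an immediate consequence of its two predecessors.
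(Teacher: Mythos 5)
Your proposal is correct and follows exactly the paper's route: the paper deduces Lemma \ref{lem3.5a} directly from Lemma \ref{lem3.1-1} (maximum principle reduction of $|D^2\tilde u|$ to the parabolic boundary) together with Lemma \ref{lem3.4a} (the lateral boundary estimate). Your extra remark that the $t=0$ slice is controlled by the Legendre transform of the uniformly convex initial datum $u_0$ is a point the paper leaves implicit, and it is handled correctly.
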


By Lemma \ref{lem3.5} and Lemma \ref{lem3.5a}, we conclude that
\begin{lemma}\label{lem3.6}
Let $F$ satisfy the structure conditions (\ref{e1.2.0})-(\ref{e1.4}) and $f\in {\mathscr{A}}_\delta$. If $u$ is a strictly convex solution of (\ref{eeee2.1}) and $|D f|$ is sufficiently small, then there exists a positive constant $C$ depending only on $u_0$, $\Omega$, $\tilde{\Omega}$, $\Lambda_1$, $\Lambda_2$ and $\delta$, such that
\begin{equation}\label{eq3.15}
\frac{1}{C}I_n\leq D^2 u(x,t) \leq C I_n,\ \ (x,t)\in\bar\Omega_T,
\end{equation}
where $I_n$ is the $n\times n$ identity matrix.
\end{lemma}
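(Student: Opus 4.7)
The plan is to assemble the two-sided bound by combining Lemma \ref{lem3.5} with Lemma \ref{lem3.5a} via the Legendre duality introduced in Definition \ref{d1.10}. Lemma \ref{lem3.5} already supplies the upper bound $\max_{\bar\Omega_T}|D^2 u|\le C$ directly, so the only remaining task is to exhibit a uniform positive lower bound on the eigenvalues of $D^2 u$, i.e.\ a uniform upper bound on the eigenvalues of $(D^2 u)^{-1}$.

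For the lower bound, the key observation is that at corresponding points $x\in\bar\Omega$ and $\tilde x=Du(x)\in\bar{\tilde\Omega}$, the Hessians are related by
\[
D^2\tilde u(\tilde x,t) \;=\; \bigl(D^2 u(x,t)\bigr)^{-1},
\]
as recorded earlier in the construction of the Legendre transform. Consequently, if $\lambda_i(x,t)$ are the eigenvalues of $D^2u(x,t)$ and $\mu_i(\tilde x,t)=\lambda_i^{-1}$ are those of $D^2\tilde u(\tilde x,t)$, then
\[
\min_{1\le i\le n}\lambda_i(x,t) \;=\; \frac{1}{\max_{1\le i\le n}\mu_i(\tilde x,t)} \;\ge\; \frac{1}{|D^2\tilde u(\tilde x,t)|}.
\]
Since $\tilde u$ solves (\ref{eeee2.1.1001}) and is strictly convex (as the Legendre dual of a strictly convex $C^{2}$ function), and since the smallness hypothesis $|Df|\ll 1$ is exactly the one required for Lemma \ref{lem3.5a} to apply, we obtain $\max_{\bar{\tilde\Omega}_T}|D^2\tilde u|\le C$. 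Inserting this into the displayed inequality gives $\min_{1\le i\le n}\lambda_i(x,t)\ge 1/C$ uniformly on $\bar\Omega_T$.

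Putting the two bounds together yields
\[
\frac{1}{C}I_n \;\le\; D^2 u(x,t)\;\le\; C I_n,\qquad (x,t)\in\bar\Omega_T,
\]
as claimed. No additional work is needed: the strictly oblique estimate Lemma \ref{llll3.4}, the boundary double-normal and double-tangential estimates (Lemmas \ref{lem3.2}--\ref{lem3.3} and their dual counterparts \ref{lem3.2a}--\ref{lem3.3a}), together with the interior maximum-principle argument based on concavity of $F$ (resp.\ concavity of $\tilde F$ and of $f$), have already done all the substantive analytic work.

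The only place where subtlety could enter is verifying that $\tilde u$ genuinely satisfies the hypotheses of Lemma \ref{lem3.5a}; but this is immediate from Remark \ref{r2.2}, which guarantees that $\tilde F$ inherits the structure conditions (\ref{e1.2.0})--(\ref{e1.4}) with the same universal constants (up to relabeling), and from the fact that the Legendre transform sends strictly convex $C^{2+\alpha,1+\alpha/2}$ solutions of (\ref{eeee2.1}) to strictly convex $C^{2+\alpha,1+\alpha/2}$ solutions of (\ref{eeee2.1.1001}). Thus the proof of Lemma \ref{lem3.6} reduces to two lines once Lemmas \ref{lem3.5} and \ref{lem3.5a} are in hand.
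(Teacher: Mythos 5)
Your proposal is correct and matches the paper's own reasoning: the paper derives Lemma \ref{lem3.6} immediately from Lemma \ref{lem3.5} (upper bound on $D^2u$) and Lemma \ref{lem3.5a} (upper bound on $D^2\tilde u=(D^2u)^{-1}$, giving the lower bound), exactly as you do.
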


\section{Longtime existence and convergence}
We will need the following proposition, which essentially asserts the convergence of the flow.
\begin{Proposition}\label{pppp5.1}
(Huang and Ye, see Theorem 1.1 in  \cite{HRY1}.) For any $T>0$, we assume that $u\in C^{4+\alpha,\frac{4+\alpha}{2}}(\bar{\Omega}_{T})$ is a unique solution of the nonlinear parabolic equation (\ref{eeee2.1}), which satisfies
\begin{equation}\label{e51.4}
\|u_{t}(\cdot,t)\|_{C(\bar{\Omega})}+\|Du(\cdot,t)\|_{C(\bar{\Omega})}+\|D^{2}u(\cdot,t)\|_{C(\bar{\Omega})}\leq C_{2},
\end{equation}
\begin{equation}\label{e51.40}
\|D^{2}u(\cdot,t)\|_{C^{\alpha}(\bar{D})}\leq C_{3},\quad \forall D\subset\subset\Omega,
\end{equation}
and
\begin{equation}\label{e51.5}
\inf_{x \in \partial \Omega}\big (\sum_{k=1}^{n}h_{p_{k}}(Du(x,t))\nu_{k} \big ) \geq \frac{1}{C_{4}},
\end{equation}
where the positive constants $C_{1}$, $C_{2}$ and $C_{3}$  are independent of ~ $t\geq 1$. Then the solution $u(\cdot,t)$ converges to a function $u^{\infty}(x,t)=\tilde{u}^\infty(x)+C_{\infty}\cdot t$
in $C^{1+\zeta}(\bar{\Omega})\cap C^{4}(\bar{D})$ as $t\rightarrow\infty$
  for any $D\subset\subset\Omega$, $\zeta<1$, that is
 $$\lim_{t\rightarrow+\infty}\|u(\cdot,t)-u^{\infty}(\cdot,t)\|_{C^{1+\zeta}(\bar{\Omega})}=0,\qquad
  \lim_{t\rightarrow+\infty}\|u(\cdot,t)-u^{\infty}(\cdot,t)\|_{C^{4}(\bar{D})}=0.$$
And $\tilde{u}^{\infty}(x)\in C^{2}(\bar{\Omega})$ is a solution of
\begin{equation}\label{e51.6}
\left\{ \begin{aligned}F(D^{2}u)-f(x)&=C_{\infty},
&  x\in \Omega, \\
h(Du)&=0, &x\in\partial\Omega.
\end{aligned} \right.
\end{equation}
The constant $C_{\infty}$ depends only on $\Omega$ $f$, and $F$. The solution to (\ref{e51.6}) is unique up to additions of constants.
\end{Proposition}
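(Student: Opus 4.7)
My plan is to deduce convergence from a monotone quantity attached to $u_{t}$, together with compactness coming from the uniform estimates \eqref{e51.4}--\eqref{e51.5}. The linearization of \eqref{eeee2.1} in $t$ gives that $\dot u:=u_{t}$ solves the homogeneous linear parabolic problem
\begin{equation*}
\dot u_{t}-F^{ij}[D^{2}u]\,\dot u_{ij}=0,\qquad h_{p_{k}}(Du)\,\dot u_{k}=0\ \text{on}\ \partial\Omega\times(0,\infty),
\end{equation*}
with strictly oblique boundary condition by \eqref{e51.5}. The first step is therefore to invoke the strong maximum principle together with the Hopf lemma for oblique problems (exactly as in Lemma \ref{11115.1a}): $M(t):=\max_{\bar\Omega}\dot u(\cdot,t)$ is non-increasing and $m(t):=\min_{\bar\Omega}\dot u(\cdot,t)$ is non-decreasing, so both limits $M_{\infty},m_{\infty}$ exist and the oscillation $\omega(t):=M(t)-m(t)$ is non-increasing.

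The core step is to show $\omega(t)\to 0$, so that $\dot u\to C_{\infty}:=M_{\infty}=m_{\infty}$ uniformly. I would argue by contradiction: if $\omega(t)\ge \omega_{0}>0$ for all $t$, pick a sequence $t_{k}\to\infty$ and consider the translates $u^{k}(x,s):=u(x,t_{k}+s)-u(x_{0},t_{k})$ on a long time interval $s\in[-T,T]$. By \eqref{e51.4}--\eqref{e51.5} and Lemma \ref{lem3.6}, the family $\{u^{k}\}$ is uniformly bounded in $C^{2,1}(\bar\Omega\times[-T,T])$ and locally in $C^{4+\alpha,2+\alpha/2}$ by interior Schauder estimates and oblique boundary Schauder estimates for \eqref{eeee2.1}; extract an Arzel\`a--Ascoli subsequence converging to a limiting solution $u^{\infty}$ of the same parabolic problem. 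For $u^{\infty}$ the oscillation of $\dot u^{\infty}$ is constant in $s$, since $M,m$ have reached their limits along $t_{k}$. Applying the strong maximum principle and the Hopf lemma (using strict obliqueness, which is inherited because \eqref{e51.5} passes to the limit) then forces $\dot u^{\infty}$ to be spatially constant, so $\omega(s)\equiv 0$ on the limit, contradicting $\omega(t_{k})\ge\omega_{0}$. Hence $\omega(t)\to 0$.

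With $\dot u\Rightarrow C_{\infty}$ uniformly in hand, set $v(x,t):=u(x,t)-C_{\infty}t$. Then $v_{t}\to 0$ uniformly, while $v$ itself inherits the bounds \eqref{e51.4}--\eqref{e51.40}. By Arzel\`a--Ascoli I can extract a subsequence $v(\cdot,t_{k})\to \tilde u^{\infty}$ in $C^{1+\zeta}(\bar\Omega)\cap C^{4}(\bar D)$ for every $D\Subset\Omega$ and every $\zeta<1$. Passing $k\to\infty$ in \eqref{eeee2.1} identifies $\tilde u^{\infty}$ as a $C^{2}$ solution of the elliptic system \eqref{e51.6}, with the boundary condition $h(D\tilde u^{\infty})=0$ surviving because $Du(\cdot,t_{k})\to D\tilde u^{\infty}$ uniformly. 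To upgrade subsequential convergence to full convergence, I would invoke uniqueness up to an additive constant for \eqref{e51.6}: if two subsequential limits $\tilde u^{\infty}_{1},\tilde u^{\infty}_{2}$ arose, normalising them to agree at one interior point would force equality by the standard comparison principle for the concave operator $F$ with oblique boundary datum (differentiating $F$ along $\tilde u^{\infty}_{1}-\tilde u^{\infty}_{2}$ yields a linear elliptic equation with strictly oblique boundary condition, to which strong maximum principle plus Hopf lemma apply). Consequently the full limit $v(\cdot,t)\to\tilde u^{\infty}$ exists, giving the stated convergence of $u(\cdot,t)$ to $u^{\infty}(x,t)=\tilde u^{\infty}(x)+C_{\infty}t$.

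The main obstacle is the middle step: showing that the oscillation of $\dot u$ actually decays to zero. The uniform monotonicity of $M,m$ comes cheaply, but ruling out a positive asymptotic oscillation requires the strict obliqueness \eqref{e51.5} and enough regularity to pass to a classical limiting solution of the same flow where strong maximum principle applies; without \eqref{e51.5} the Hopf step fails and $\dot u^{\infty}$ could attain its extremum on the boundary without being constant. All the other pieces (Schauder compactness, identification of the limit PDE, uniqueness up to constants) are essentially routine given the a priori bounds established in Sections 3--4.
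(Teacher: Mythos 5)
First, note that the paper does not prove Proposition \ref{pppp5.1} at all: it is imported verbatim from Theorem 1.1 of \cite{HRY1}, so there is no in-paper proof to compare against. Judged on its own merits, your strategy is the standard one (differentiate in $t$, monotonicity of $M(t)=\max\dot u$ and $m(t)=\min\dot u$ via the maximum principle and the Hopf lemma for the oblique condition $h_{p_k}(Du)\dot u_k=0$, then a compactness/strong-maximum-principle argument to kill the asymptotic oscillation), and the first two stages are essentially sound, modulo the routine but necessary remark that the translated solutions need uniform $C^{2+\alpha,1+\alpha/2}(\bar\Omega)$ bounds (Lieberman's Theorem 14.22, using concavity of $F$ and \eqref{e51.5}) so that the limiting flow is classical up to the boundary and the Hopf lemma applies there.

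The genuine gap is in the last step. Knowing only that $\omega(t):=M(t)-m(t)\to 0$, hence $\dot u\to C_\infty$ uniformly, does \emph{not} imply that $v(x,t):=u(x,t)-C_\infty t$ converges, nor even that it is bounded: since $m(t)-C_\infty\le v_t\le M(t)-C_\infty$ with the lower bound nonpositive and the upper bound nonnegative, $v(x_0,t)=v(x_0,0)+\int_0^t(\dot u(x_0,s)-C_\infty)\,ds$ can drift to infinity or oscillate forever if $\omega$ is not integrable in time (compare $\phi(t)=\sin(\log t)$, which is bounded with $\phi'\to0$ but divergent). Your Arzel\`a--Ascoli plus ``uniqueness up to constants'' step cannot repair this: different subsequences may converge to translates $\tilde u^\infty+c_k$ with non-convergent $c_k=v(x_0,t_k)$, and normalising the limits to agree at a point says nothing about convergence of the original family. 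What is missing is a \emph{rate}: one must upgrade the strong maximum principle to a uniform contraction $\omega(t+1)\le\theta\,\omega(t)$ with $\theta<1$ independent of $t$ (obtained from the parabolic Harnack/Krylov--Safonov inequality applied to $M(t)-\dot u$ and $\dot u-m(t)$, using the uniform ellipticity from Lemma \ref{lem3.6} and the strict obliqueness \eqref{e51.5}). This yields exponential decay of $\omega$, hence $|\dot u-C_\infty|\le\omega(t)\in L^1(dt)$, and only then is $t\mapsto v(\cdot,t)$ Cauchy in $C^0(\bar\Omega)$; interpolation with the uniform $C^2$ and interior $C^{4+\alpha}$ bounds then gives the stated $C^{1+\zeta}(\bar\Omega)\cap C^4(\bar D)$ convergence. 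This quantitative step is precisely the core of the cited proof in \cite{HRY1} (following Schn\"urer--Smoczyk and Altschuler--Wu), and your qualitative argument does not replace it.
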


Now, we can give\\
\noindent{\bf Proof of Theorem \ref{tttt1.1}.}\\
This a standard result by our $C^{2}$ estimates and uniformly oblique estimates, but for convenience we include here a proof.\\

 Part 1: The long time existence.

By Lemma \ref{lem3.6}, we know the global $C^{2,1}$ estimates for the solutions of the flow (\ref{eeee1.1})-(\ref{eeee1.3}). Using Theorem 14.22 in Lieberman \cite{GM} and Lemma \ref{llll3.4}, we can show that the solutions of the oblique derivative problem \eqref{eeee2.1} have global $C^{2+\alpha,1+\frac{\alpha}{2}}$ estimates.

Now let $u_{0}$  be a $C^{2+\alpha_{0}}$ strictly convex function as in the conditions of Theorem \ref{tttt1.1}. We assume that $T$ is the maximal time such that the solution to the flow (\ref{eeee2.1}) exists. Suppose that $T<+\infty$. Combining Proposition \ref{pppp1.1} with Lemma \ref{lem3.6} and using Theorem 14.23 in \cite{GM}, there exists $ u\in C^{2+\alpha,1+\frac{\alpha}{2}}(\bar{\Omega}_{T})$ which satisfies (\ref{eeee2.1}) and
$$\|u\|_{C^{2+\alpha,1+\frac{\alpha}{2}}(\bar{\Omega}_{T})}<+\infty.$$
Then we can extend the flow (\ref{eeee2.1}) beyond the maximal time $T$. So that we deduce that $T=+\infty$. Then there exists the solution $u(x,t)$ for all times $t>0$ to (\ref{eeee1.1})-(\ref{eeee1.3}).\\

 Part 2: The convergence.

By the boundary condition, we have
\begin{equation*}
|Du|\leq C_{5},
\end{equation*}
where $C_{5}$ is a constant depending on $\Omega$ and $\tilde{\Omega}$. By intermediate Schauder estimates for parabolic
equations (cf. Lemma 14.6 and Proposition 4.25 in \cite{GM}), for any $D\subset\subset \Omega$, we have
\begin{equation*}
[D^{2}u]_{\alpha,\frac{\alpha}{2}, D_{T}}\leq C\sup|D^{2}u|\leq C_{6},
\end{equation*}
and
\begin{equation*}
\sup_{t\geq 1}\|D^{3}u(\cdot,t)\|_{C(\bar{D})}+\sup_{t\geq 1}\|D^{4}u(\cdot,t)\|_{C(\bar{D})}+\sup_{x_{i}\in D, t_{i}\geq 1}\frac{|D^{4}u(x_{1}, t_{1})-D^{4}u(x_{2}, t_{2})|}{\max\{|x_{1}-x_{2}|^{\alpha},|t_{1}-t_{2}|^{\frac{\alpha}{2}}\}}\leq C_{7},
\end{equation*}
where $ C_{6}$, $ C_{7}$ are  constants depending on the known data and dist$(\partial \Omega, \partial D)$.

Using Proposition \ref{pppp5.1} and combining the bootstrap arguments as in \cite{WHB}, we finish the proof of Theorem \ref{tttt1.1}.\qed\\

Finally, we can present\\
\noindent{\bf Proof of Theorem \ref{t1.1}.}\\
By Proposition \ref{prop2.10.2} and Remark \ref{rem2020}, we see that Theorem \ref{t1.1} is a direct consequence of Theorem \ref{tttt1.1}.\qed

\section*{Acknowledgements}
The authors would like to express deep gratitude to Professor Yuanlong Xin for his suggestions and constant encouragement.

\end{document}